\newtheorem{definition}{Definition}%
\newtheorem{theorem}{Theorem}%
\newtheorem{lemma}{Lemma}%
\newtheorem{proposition}{Proposition}%
\newtheorem{corollary}{Corollary}%
\newtheorem{assumption}{Assumption}%
\newtheorem{remark}{Remark} 
\newtheorem{example}{Example}
\pgfplotsset{compat=newest}
\newcommand{\MyHighlight}[1]{\textbf{#1}}
\newcommand{\VisibleColTwo}[1]{#1}
\begin{document}

	\title{Second Moment Polytopic Systems: Generalization of Uncertain Stochastic Linear Dynamics%
		\thanks{%
			This work has been submitted to the IEEE for possible publication. Copyright may be transferred without notice, after which this version may no longer be accessible.
			This work was partly supported by JSPS KAKENHI Grant Number JP18K04222.
			The materials of this paper have been published in part in conference proceedings \cite{ItoIFACWC20,ItoCDC20}.  
			We would like to thank Editage (www.editage.jp) for the English language editing.
	}} 
	
	\author{Yuji Ito\thanks{Yuji Ito is the corresponding author and with Toyota Central R\&D Labs., Inc., 41-1 Yokomichi, Nagakute-shi, Aichi 480-1192, Japan	(e-mail: ito-yuji@mosk.tytlabs.co.jp).}
		\and
		Kenji Fujimoto\thanks{Kenji Fujimoto is with the Department of Aeronautics and Astronautics, Graduate School of Engineering, Kyoto University, Kyotodaigakukatsura, Nishikyo-ku, Kyoto-shi, Kyoto 615-8540, Japan (e-mail: k.fujimoto@ieee.org).}
	}  
	
	\date{}
	
	\maketitle

\newcommand*{\SymColor}[1]{\textcolor{red}{#1}}
\renewcommand{\SymColor}[1]{#1}


\newcommand*{\SetSymMat}[1]{\SymColor{\mathbb{\zzR}}_{\mathrm{sym}}^{#1}}

\newcommand*{\Identity}[1]{\SymColor{\boldsymbol{\zzI}}_{#1}}

\newcommand*{\vectorWildCard}{\SymColor{\boldsymbol{\bullet}}}

\newcommand*{\MyTRANSPO}{\SymColor{\top}}

\newcommand*{\El}[2]{\SymColor{[}{#1}\SymColor{]}_{#2}}
\newcommand*{\VEC}[2][]{\SymColor{\mathrm{vec }#1(} #2 \SymColor{#1)}}
\newcommand*{\VECH}[2][]{\SymColor{\mathrm{vech}#1(} #2 \SymColor{#1)}}
\newcommand*{\TRACE}[2][]{\SymColor{\mathrm{tr}}#1( #2 #1)}

\newcommand*{\iEig}[2]{\SymColor{\lambda_{#1} (}#2\SymColor{)}}
\newcommand*{\iEigVec}[2]{\SymColor{\boldsymbol{\nu}_{#1} (}#2\SymColor{)}}

\newcommand*{\MyRank}[1]{\SymColor{\mathrm{rank}(}#1\SymColor{)}}

\newcommand*{\PDF}[1]{\SymColor{\zzp(}#1\SymColor{)}}
\newcommand*{\CondPDF}[2]{\SymColor{\zzp(}#1  \SymColor{|} #2 \SymColor{)}}

\newcommand*{\SeqTVUnc}{\TVUnc{\bullet}}
\newcommand*{\Expect}[2][]{\SymColor{\mathrm{E}#1[} #2  \SymColor{#1]}}
\newcommand*{\CondExpectTI}[2][]{     \SymColor{\mathrm{E}#1[} #2 \SymColor{#1|}_{\TIUnc}  \SymColor{#1]}}
\newcommand*{\CondExpectTV}[3][]{     \SymColor{\mathrm{E}#1[} #2 \SymColor{#1|}_{\SeqTVUnc}  \SymColor{#1]}}
\newcommand*{\CondExpectAllTV}[4][]{  \SymColor{\mathrm{E}#1[} #2 \SymColor{#1|}_{\SeqTVUnc}  \SymColor{#1]}}
\newcommand*{\CondExpectAlleqTV}[4][]{\SymColor{\mathrm{E}#1[} #2 \SymColor{#1|}_{\SeqTVUnc}  \SymColor{#1]}}
\newcommand*{\CondCovTI}[2][]{\SymColor{\mathrm{Cov}#1[} #2       \SymColor{#1|}_{\TIUnc}  \SymColor{#1]}}
\newcommand*{\CondCovTV}[3][]{\SymColor{\mathrm{Cov}#1[} #2       \SymColor{#1|}_{\SeqTVUnc}  \SymColor{#1]}}
\newcommand*{\anotherCondExpectTI}[2][]{\SymColor{\mathrm{E}#1[} #2 \SymColor{#1|}_{\anotherTIUnc}  \SymColor{#1]}}
\newcommand*{\anotherCondCovTI}[2][]{\SymColor{\mathrm{Cov}#1[} #2 \SymColor{#1|}_{\anotherTIUnc}  \SymColor{#1]}}



\newcommand*{\NotationVec}{\SymColor{\boldsymbol{\zzy}}}
\newcommand*{\NotationMat}{\SymColor{\boldsymbol{\zzX}}}
\newcommand*{\NotationSymMat}{\SymColor{\boldsymbol{\zzY}}}
\newcommand*{\NotationSquMat}{\SymColor{\boldsymbol{\zzX}}}
\newcommand*{\IDNotation}{\SymColor{\zzi}}
\newcommand*{\IDbNotation}{\SymColor{\zzj}}
\newcommand*{\DimANotation}{\SymColor{\zzn}}
\newcommand*{\DimBNotation}{\SymColor{\zzm}}
\newcommand*{\NotationA}{\SymColor{\mathrm{a}}}
\newcommand*{\NotationB}{\SymColor{\mathrm{b}}}

\newcommand*{\OpeNotationMat}{\SymColor{\boldsymbol{\zzY}}}
\newcommand*{\OpeNotationbMat}{\SymColor{\boldsymbol{\zzX}}}
\newcommand*{\OpeNotationSymMat}{\SymColor{\boldsymbol{\zzX}}_{\mathrm{s}}}


\newcommand*{\IDEl}{\SymColor{\zzi}}
\newcommand*{\IDbEl}{\SymColor{\zzj}}
\newcommand*{\IDcEl}{\SymColor{\zzi^{\prime}}}
\newcommand*{\IDdEl}{\SymColor{\zzj^{\prime}}}


\newcommand*{\DimX}{\SymColor{         \zzn       }}
\newcommand*{\DimU}{\SymColor{         \zzm       }}

\newcommand*{\MyT}{\SymColor{\zzt}}
\newcommand*{\MybT}{\SymColor{\zzs}}
\newcommand*{\MycT}{\SymColor{\zzt^{\prime}}}
\newcommand*{\State}[1]{\SymColor{\boldsymbol{\zzx}}_{#1}}
\newcommand*{\Input}[1]{\SymColor{\boldsymbol{\zzu}}_{#1}}

\newcommand*{\FBgain}{\SymColor{\boldsymbol{\zzK}}}

\newcommand*{\NonArgDriftMat}[1]{\SymColor{\boldsymbol{\zzA}_{#1}}} 
\newcommand*{\NonArgInMat}[1]{\SymColor{\boldsymbol{\zzB}_{#1}}}
\newcommand*{\DriftMat}[1]{\NonArgDriftMat{#1}( \TVUnc{#1}  )} 
\newcommand*{\InMat}[1]{\NonArgInMat{#1}( \TVUnc{#1}   )} 
\newcommand*{\CLDriftMat}[1]{\NonArgDriftMat{\mathrm{cl},#1}   } 

\newcommand*{\TIUnc}{\SymColor{\boldsymbol{\theta}}}
\newcommand*{\TVUnc}[1]{\TIUnc_{#1}}
\newcommand*{\DomTVUnc}{\SymColor{\mathbb{\zzS}_{\TIUnc}}}
\newcommand*{\DimTVUnc}{\SymColor{     \zzd_{\TIUnc}           }}

\newcommand*{\NonArgvecAB}[1]{\SymColor{\boldsymbol{\zzv}_{#1}}} 
\newcommand*{\vecTVAB}[1]{\NonArgvecAB{#1}  ( \TVUnc{#1}   )  }
\newcommand*{\vecTIAB}[1]{\NonArgvecAB{#1}  ( \TIUnc   )  }



\newcommand*{\NonArgMapPolyW}{\SymColor{ {\boldsymbol{\phi}}} } 
\newcommand*{\MapPolyW}[1]{\NonArgMapPolyW \SymColor{(} #1 \SymColor{)}} 
\newcommand*{\DomPolytope}{\SymColor{\mathbb{\zzP}_{\NUMpoly}}}

\newcommand*{\polySMvecAB}[1]{\SymColor{ \boldsymbol{\zzM}^{(#1)}  }} 
\newcommand*{\blockpolySMvecAB}[3]{\SymColor{ \boldsymbol{\zzM}_{{#2},{#3}}^{(#1)}  }} 

\newcommand*{\NUMpoly}{\SymColor{\zzN}}
\newcommand*{\IDpoly}{\SymColor{\zzk}}
\newcommand*{\IDbpoly}{\SymColor{\zzk^{\prime}}}


\newcommand*{\EMSrate}{\SymColor{\beta}}
\newcommand*{\EMScoef}{\SymColor{\alpha}}
\newcommand*{\exEMSrate}{\SymColor{\widetilde{\beta}}}
\newcommand*{\exEMScoef}{\SymColor{\widetilde{\alpha}}}

\newcommand*{\DimexX}{\SymColor{\widetilde{\zzn}}}

\newcommand*{\exState}[1]{\SymColor{\widetilde{\boldsymbol{\zzx}}}_{#1}}

\newcommand*{\exTIUnc}{\SymColor{\widetilde{\boldsymbol{\theta}}}}
\newcommand*{\exTVUnc}[1]{\exTIUnc_{#1}}

\newcommand*{\ImagePolyW}{\widetilde{\DomTVUnc}}

\newcommand*{\exTICLMat}[1]{    \SymColor{\boldsymbol{\zzF}( \exTIUnc     , #1 )}}
\newcommand*{\DETAILexCLMat}[2]{\SymColor{\boldsymbol{\zzF}( \exTVUnc{#2} , #1 )}}
\newcommand*{\exCLMat}[1]{\DETAILexCLMat{#1}{\MyT}}

\newcommand*{\expolyCLMat}[2]{\SymColor{\boldsymbol{\zzF}}^{ \SymColor{(} #1 \SymColor{)} }  \SymColor{(} #2 \SymColor{)}  }

\newcommand*{\expolyAAMat}[1]{\SymColor{\boldsymbol{\zzF}}_{\SymColor{\mathrm{aa}}}^{ \SymColor{(} #1 \SymColor{)} }  }
\newcommand*{\expolyABMat}[1]{\SymColor{\boldsymbol{\zzF}}_{\SymColor{\mathrm{ab}}}^{ \SymColor{(} #1 \SymColor{)} }  }
\newcommand*{\expolyBAMat}[1]{\SymColor{\boldsymbol{\zzF}}_{\SymColor{\mathrm{ba}}}^{ \SymColor{(} #1 \SymColor{)} }  }
\newcommand*{\expolyBBMat}[1]{\SymColor{\boldsymbol{\zzF}}_{\SymColor{\mathrm{bb}}}^{ \SymColor{(} #1 \SymColor{)} }  }

\newcommand*{\DupMat}{\SymColor{\boldsymbol{\zzC}_{\mathrm{d}}}}
\newcommand*{\EliMat}{\SymColor{\boldsymbol{\zzC}_{\mathrm{e}}}}
\newcommand*{\Elimi}[1]{\SymColor{\mathcal{\zzC}} \SymColor{(}    #1\SymColor{)}}
\newcommand*{\NonArgElimi}{\SymColor{\mathcal{\zzC}} }



\newcommand*{\itemMultiPERSCMIs}{(C1.1)}
\newcommand*{\itemMultiPRSCMIs}{(C1.2)}

\newcommand*{\itemMultiPERSQMIs}{(C2.1)}
\newcommand*{\itemMultiPRSQMIs}{(C2.2)}

\newcommand*{\itemMultiPERSSDPs}{(C3.1)}
\newcommand*{\itemMultiPRSSDPs}{(C3.2)}

\newcommand*{\CMIterms}[5]{   \SymColor{\boldsymbol{\zzS}_{\mathrm{cmi}}^{(#1)}} \SymColor{(} #2 , #3 , #4 , #5 \SymColor{)}    }
\newcommand*{\QMIterms}[5]{   \SymColor{\boldsymbol{\zzS}_{\mathrm{qmi}}^{(#1)}} \SymColor{(} #2 , #3 , #4 , #5 \SymColor{)}    }
\newcommand*{\SDPLMIterms}[4]{\SymColor{\boldsymbol{\zzS}_{\mathrm{lmi}}^{(#1)}} \SymColor{(} #2 , #3 , #4  \SymColor{)}    }

\newcommand*{\QMILowLeftblock}[2]{  \SymColor{\boldsymbol{\zzF}}_{\SymColor{\mathrm{qmi}}}^{ \SymColor{(} #1 \SymColor{)} }\SymColor{(} #2 \SymColor{)}   }
\newcommand*{\LMILowLeftblock}[2]{  \SymColor{\boldsymbol{\zzF}}_{\SymColor{\mathrm{lmi}}}^{ \SymColor{(} #1 \SymColor{)} }\SymColor{(} #2 \SymColor{)}   }

\newcommand*{\dummyHHfunc}[1]{\SymColor{\boldsymbol{\zzF}_{\mathrm{hh}}(}#1\SymColor{)} }
\newcommand*{\dummyHMfunc}[1]{\SymColor{\boldsymbol{\zzF}_{\mathrm{hl}}(}#1\SymColor{)} }
\newcommand*{\dummyMHfunc}[1]{\SymColor{\boldsymbol{\zzF}_{\mathrm{lh}}(}#1\SymColor{)} }
\newcommand*{\dummyMMfunc}[1]{\SymColor{\boldsymbol{\zzF}_{\mathrm{ll}}(}#1\SymColor{)} }

\newcommand*{\expolyVMat}[1]{\SymColor{\boldsymbol{\zzP}}^{ \SymColor{(} #1 \SymColor{)} }  }
\newcommand*{\UNIexpolyVMat}{\SymColor{\boldsymbol{\zzP}} }
\newcommand*{\LYAPexpolyVMat}{\SymColor{\boldsymbol{\zzP}_{\ast}} }
\newcommand*{\exAddMat}{\SymColor{\boldsymbol{\zzG}}}

\newcommand*{\AIMexpolyVMat}[1]{\SymColor{\boldsymbol{\zzQ}}^{ \SymColor{(} #1 \SymColor{)} }  }
\newcommand*{\UNIAIMexpolyVMat}{\SymColor{\boldsymbol{\zzQ}} }

\newcommand*{\AIMFBgain}{\SymColor{\boldsymbol{\zzL}}}
\newcommand*{\AddInvMat}{\SymColor{\boldsymbol{\zzH}}}

\newcommand*{\BlockHMMat}[2]{\SymColor{\boldsymbol{\zzZ}}_{#1,#2}}



\newcommand*{\smallmarginLMIs}{\SymColor{\eta}}

\newcommand*{\rankoneErr}[1]{\SymColor{\varepsilon( }  #1  \SymColor{)}  }
\newcommand*{\rankoneApproxErr}[2]{\SymColor{\widehat{\varepsilon}( }  #1 , #2  \SymColor{)}  }

\newcommand*{\dummyPreHMMat}{\SymColor{\boldsymbol{\zzZ}^{\prime}}}
\newcommand*{\dummyIteHMMat}[1]{\SymColor{\boldsymbol{\zzZ}^{(#1)}}}

\newcommand*{\IDite}{\SymColor{\ell}}

\newcommand*{\approxdummyHMMat}{\SymColor{\widehat{\boldsymbol{\zzZ}}_{\ast}}}
\newcommand*{\optdummyHMMat}{   \SymColor{         \boldsymbol{\zzZ}_{\ast}} }
\newcommand*{\dummyHMMat}{\SymColor{\boldsymbol{\zzZ}}}

\newcommand*{\UBdummyHMMat}{\SymColor{\zzZ_{\mathrm{ub}}}}

\newcommand*{\MultiSDPterminationVal}{\SymColor{\delta}}



\newcommand*{\polyvecAB}[1]{\SymColor{\boldsymbol{\zzv}^{(#1)}}} 
\newcommand*{\polyRandvecAB}[1]{\SymColor{\boldsymbol{\zzv}_{\MyT}^{(#1)}}}

\newcommand*{\ConstMeanvecAB}{\SymColor{\boldsymbol{\mu}}}
\newcommand*{\polyMeanvecAB}[1]{\SymColor{\boldsymbol{\mu}^{(#1)}}} 
\newcommand*{\polyCovvecAB}[1]{\SymColor{\boldsymbol{\Sigma}^{(#1)}}}

\newcommand*{\anotherTIUnc}{\overline{\TIUnc}}
\newcommand*{\anothervecTIAB}[1]{\overline{\NonArgvecAB{#1}}  ( \anotherTIUnc   )  }
\newcommand*{\anotherDomTVUnc}{\overline{\DomTVUnc}}
\newcommand*{\anotherDimTVUnc}{\overline{\DimTVUnc}}



\newcommand*{\tempAmat}{\SymColor{\boldsymbol{\zzA}}}
\newcommand*{\tempBmat}{\SymColor{\boldsymbol{\zzB}}}
\newcommand*{\tempABvec}{\SymColor{\boldsymbol{\zzv}}}
\newcommand*{\tempSMABvec}[2]{\SymColor{\boldsymbol{\zzM}}_{#1,#2}}

\newcommand*{\coefASchwarz}{\NotationVec_{\mathrm{a}}}
\newcommand*{\coefBSchwarz}{\NotationVec_{\mathrm{b}}}

\newcommand*{\PFStateIni}[2]{            {\State{ 0}^{(#1,#2)}}    }   
\newcommand*{\vhxx}[2]{                {\exState{ 0}^{(#1,#2)}}   }
\newcommand*{\SpetialexStatevhxx}[3]{   \exState{#1}^{(#2,#3)}  } 
\newcommand*{\weightPFexState}[2]{\SymColor{\zzw}_{#1,#2}}

\newcommand*{\pfCovNonDiagVal}[1]{\SymColor{\sigma}_{#1}}

\newcommand*{\CMIstrictCoef}{\SymColor{\gamma}}
\newcommand*{\BRIEFexCLMat}[1]{\SymColor{\boldsymbol{\zzF}}_{#1}}


\newcommand*{\zzR}{R}
\newcommand*{\zzI}{I}
\newcommand*{\zzp}{p}

\newcommand*{\zzi}{i}
\newcommand*{\zzj}{j}

\newcommand*{\zzd}{d}

\newcommand*{\zzn}{n}
\newcommand*{\zzm}{m}

\newcommand*{\zzt}{t}
\newcommand*{\zzs}{s}

\newcommand*{\zzx}{x}
\newcommand*{\zzu}{u}

\newcommand*{\zzK}{K}
\newcommand*{\zzA}{A}
\newcommand*{\zzB}{B}

\newcommand*{\zzS}{S}
\newcommand*{\zzN}{N}

\newcommand*{\zzM}{M}
\newcommand*{\zzk}{k}

\newcommand*{\zzF}{F}
\newcommand*{\zzC}{C}

\newcommand*{\zzP}{P}
\newcommand*{\zzG}{G}
\newcommand*{\zzQ}{Q}
\newcommand*{\zzL}{L}
\newcommand*{\zzH}{H}
\newcommand*{\zzZ}{Z}

\newcommand*{\zzv}{v}

\newcommand*{\zzw}{w}

\newcommand*{\zzy}{y}
\newcommand*{\zzY}{Y}
\newcommand*{\zzX}{X}

\begin{abstract}
	This paper presents a new paradigm to stabilize uncertain stochastic linear systems.
	Herein, second moment polytopic (SMP) systems are proposed that generalize systems with both uncertainty and randomness.
	The SMP systems are characterized by second moments of the stochastic system matrices and the uncertain parameters.
	Further, a fundamental theory for guaranteeing stability of the SMP systems is established.
	It is challenging to analyze the SMP systems owing to both the uncertainty and randomness.
	An idea to overcome this difficulty is to expand the SMP systems and exclude the randomness.
	Because the expanded systems contain only the uncertainty, their stability can be analyzed via robust stability theory.
	The stability of the expanded systems is equivalent to statistical stability of the SMP systems.	
	These facts provide sufficient conditions for the stability of the SMP systems as linear matrix inequalities (MIs).
	In controller design for the SMP systems, the linear MIs reduce to cubic MIs whose solutions correspond to feedback gains.
	The cubic MIs are transformed into simpler quadratic MIs that can be solved using optimization techniques. 
	Moreover, solving such non-convex MIs is relaxed into the iteration of a convex optimization.
	Solutions to the iterative optimization provide feedback gains that stabilize the SMP systems.
	As demonstrated here, the SMP systems represent linear dynamics with uncertain mean and covariance and other existing systems such as independently identically distributed dynamics and random polytopes.
	Finally, a numerical simulation shows the effectiveness of the proposed method.
\end{abstract}
\section{Introduction}\label{sec_intro}

Uncertainties and randomness are present in various dynamical systems such as semi-autonomous vehicles with individual differences among human drivers \cite{Saleh13} and nanoscale receivers with  manufacturing variations \cite{ItoACCESS19}.
Stabilization of such systems is a crucial task for implementation of control systems in the real world.
Thus, this study focuses on stability analysis and controller design for uncertain stochastic linear systems.

Previously, various robust control approaches have been proposed to treat uncertainties in system dynamics.
Stability and control of uncertain systems have been discussed using matrix inequalities \cite{Boyd94,Oliveira99,Grman05,Peres05,Zhang10}, boundary mapping \cite{Mutlu18,Rick19}, and variational methods \cite{Okura15}.
Probabilistic methods  have guaranteed stability of uncertain systems in a probabilistic sense \cite{TempoCDC96,Polyak01,Fujisaki03}.
If the uncertainties are probabilistic, they are treated as time-invariant (TI) stochastic parameters \cite{Fisher09,ItoCDC16}.
Such a representation of uncertainties is efficient for the improvement of statistical control performance.
Controller design associated with stochastic parameters reduces to stochastic optimal control problems \cite{Fisher09,Templeton12,Bhattacharya14,ItoCDC16,Wan2017,Shen2017}, for which efficient tools such as polynomial chaos expansions \cite{Xiu02} have been established.

Further, various types of time-varying (TV) stochastic systems that are associated with stability analysis and controller design have been analyzed.
TV stochastic parameters can express multiplicative noises included in system dynamics \cite{Koning82} rather than external additive noises \cite{Anderson89}.
Independently identically distributed (i.i.d.) stochastic parameters are common because their Markov properties are tractable in control problems.
Stability of systems with i.i.d. parameters has been guaranteed via several approaches such as using stochastic Riccati equations \cite{Koning82,ItoACC16} and Kronecker products \cite{Hibey96,Ogura13}.
Continuous-time cases with Wiener processes have been analyzed \cite{Luo19TAC,Zhang12TAC}.
Multiple control problems have been addressed, such as optimal control \cite{Koning82}, variance suppression \cite{Fujimoto11FTOP,Fujimoto11ITOP}, risk-sensitive control \cite{ItoACC16,ItoTAC19}.

A crucial challenge is to handle combination of uncertainties and stochastic parameters, which increases the expressiveness of systems.
Stability and control of random polytopes have been analyzed based on sampling-based matrix inequalities \cite{HosoeTAC18} and S-variable approaches \cite{HosoeAutoma20}.
Other systems involving such combinations have also been analyzed, e.g., \cite{Tabarraie16,Gershon18}.
Some variations of stochastic systems are summarized in \cite{Mesbah16}.
Although these promising results have successfully treated the uncertainties and stochastic parameters simultaneously, system representations are still limited to specific types.

To generalize systems involving both uncertainties and stochastic randomness,
this study establishes a fundamental theory to stabilize various types of uncertain stochastic linear systems in a unified manner.
The systems are generalized as second moment polytopic (SMP) systems.
Herein, stability conditions and controller design methods for the SMP systems are derived.
The main contributions of this study are summarized as follows.

\begin{enumerate}
	\item
	We present a novel class of uncertain stochastic linear systems, called SMP systems, which can represent various types of systems with both uncertainties and stochastic randomness.
	This class is easy to utilize because it consists of a second moment of system matrices without requiring the mean dynamics.  
	Moreover, a compression operator is employed to reduce the dimensions of the expanded systems. 
	
	\item
	We show that statistical stability of the SMP systems is equivalent to stability of their expanded systems.
	The proposed expanded systems are included in deterministic polytopes, which are compatible with existing methods such as \cite{Oliveira99}.
	This simplifies the stability analysis for the SMP systems.
		
	\item 
	We derive sufficient conditions for the statistical stability of SMP systems.
	The conditions are expressed by linear matrix inequalities (LMIs) if the systems are autonomous or controllers are given.
	
	\item
	We propose a method to design linear feedback controllers that stabilize the SMP system.
	It is first shown that solutions to quadratic matrix inequalities (QMIs) are the feedback gains.
	Next, we relax solving the QMIs as an iteration of solving a semidefinite program (SDP) that is convex and easy to solve.

\end{enumerate}

This paper is a substantially extended version of our conference papers \cite{ItoIFACWC20,ItoCDC20}.
Although only random polytope systems are considered in \cite{ItoIFACWC20,ItoCDC20}, this study treats the SMP systems that generalize uncertain stochastic systems.
Moreover, whereas the paper \cite{ItoCDC20} tackles solving a non-convex program, this study presents a method to relax such a non-convex program as an iteration of a convex one. 
A novel analysis and demonstration are presented to show the contributions of this study.
Moreover, important theoretical points have been reviewed to improve the technical soundness and readability.

The remainder of this paper is organized as follows.
Section \ref{sec_notation} describes the notation used in this paper.
In Section \ref{sec_problem}, we propose the SMP systems associated with two main problems.
Our approach and solutions to the main problems are presented in Section \ref{sec_method}.
Section \ref{sec_sim} demonstrates the applicability and effectiveness of the proposed method.
Finally, Section \ref{sec_conclusion} concludes this study.

\section{Notation}\label{sec_notation}

This paper uses the following notation.
\begin{itemize}
	
	\item
	$\mathbb{R}^{\DimANotation \times \DimBNotation}$: the set of $\DimANotation \times \DimBNotation$ real-valued matrices 
	
	\item
	${\SetSymMat{\DimANotation}}$: the set of $\DimANotation \times \DimANotation$ real-valued symmetric matrices

	\item 
	$\Identity{\DimANotation}$: the $\DimANotation \times \DimANotation$ identity matrix
	\item
	$\El{\NotationVec}{\IDNotation}$: the $\IDNotation$-th component of a vector $\NotationVec \in \mathbb{R}^{\DimANotation}$
	\item
	$\El{\NotationMat}{\IDNotation,\IDbNotation}$: the component in the $\IDNotation$-th row and $\IDbNotation$-th column  of a matrix $\NotationMat \in \mathbb{R}^{\DimANotation \times \DimBNotation}$
	
	\item
	$\El{\NotationMat}{\vectorWildCard,\IDbNotation}$: the $\IDbNotation$-th column vector of a matrix $\NotationMat \in \mathbb{R}^{\DimANotation \times \DimBNotation}$
	
	\item
	$\VEC{\NotationMat}:=[ 
	\El{\NotationMat}{1,1} , \dots, \El{\NotationMat}{\DimANotation,1} , 
	\El{\NotationMat}{1,2} , \dots, \El{\NotationMat}{\DimANotation,2} , 
	\dots ,
	$ $	
	\El{\NotationMat}{1,\DimBNotation} , \dots, \El{\NotationMat}{\DimANotation,\DimBNotation}
	]^{\MyTRANSPO} $: 
	the vectorization of the components of a matrix $\NotationMat \in \mathbb{R}^{\DimANotation \times \DimBNotation}$
	
	\item
	$\VECH{\NotationSquMat}:=[ 
	\El{\NotationSquMat}{1,1} , \dots, \El{\NotationSquMat}{\DimANotation,1} , 
	\El{\NotationSquMat}{2,2} , \dots, \El{\NotationSquMat}{\DimANotation,2} , 
	\dots ,
	$ $
	\El{\NotationSquMat}{\IDbNotation,\IDbNotation} , \dots, \El{\NotationSquMat}{\DimANotation,\IDbNotation} , 
	\dots ,
	\El{\NotationSquMat}{\DimANotation,\DimANotation}	]^{\MyTRANSPO} $: 
	the half vectorization of the lower triangular components of a square matrix $\NotationSquMat \in \mathbb{R}^{\DimANotation \times \DimANotation}$

	\item
	$\NotationMat_{\NotationA} \otimes \NotationMat_{\NotationB} \in \mathbb{R}^{\DimANotation_{\NotationA} \DimANotation_{\NotationB} \times \DimBNotation_{\NotationA} \DimBNotation_{\NotationB} }$: the Kronecker product of matrices $\NotationMat_{\NotationA} \in \mathbb{R}^{\DimANotation_{\NotationA} \times \DimBNotation_{\NotationA} }$ and $\NotationMat_{\NotationB} \in \mathbb{R}^{\DimANotation_{\NotationB} \times \DimBNotation_{\NotationB} }$, given by
	\begin{align}
	\NotationMat_{\NotationA} \otimes \NotationMat_{\NotationB} =
	\begin{bmatrix}
	\El{\NotationMat_{\NotationA}}{1,1} \NotationMat_{\NotationB} & \hdots & \El{\NotationMat_{\NotationA}}{1,\DimBNotation_{\NotationA}} \NotationMat_{\NotationB} \\
	\vdots & \ddots & \vdots \\
	\El{\NotationMat_{\NotationA}}{\DimANotation_{\NotationA},1} \NotationMat_{\NotationB} & \hdots & \El{\NotationMat_{\NotationA}}{\DimANotation_{\NotationA},\DimBNotation_{\NotationA}} \NotationMat_{\NotationB} 
	\end{bmatrix}	
	\nonumber
	\end{align}	
	
	\item
	$\MyRank{\NotationMat}$: the rank of a matrix $\NotationMat \in \mathbb{R}^{\DimANotation \times \DimBNotation}$

	\item
	$\NotationSymMat \succ 0$ (resp. $\prec 0$): the positive (resp. negative) definiteness of a symmetric{\footnote{In this paper, a positive/negative definite/semidefinite matrix means a positive/negative definite/semidefinite symmetric matrix.}} matrix $\NotationSymMat \in {\SetSymMat{\DimANotation}}$
	\item
	$\NotationSymMat \succeq 0$ (resp. $\preceq 0$): the positive (resp. negative) semidefiniteness of a symmetric matrix $\NotationSymMat \in {\SetSymMat{\DimANotation}}$

	\item
	$\iEig{\IDEl}{\NotationSymMat}$: $\IDEl$-th eigenvalue of a symmetric matrix $\NotationSymMat \in {\SetSymMat{\DimANotation}}$ such that $\iEig{1}{\NotationSymMat} \geq \iEig{2}{\NotationSymMat} \geq \dots \geq \iEig{\DimANotation}{\NotationSymMat}$ 
	
	\item
	$\iEigVec{\IDEl}{\NotationSymMat}$: $\IDEl$-th unit eigenvector corresponding to $\iEig{\IDEl}{\NotationSymMat}$ of a symmetric matrix $\NotationSymMat \in {\SetSymMat{\DimANotation}}$, i.e., 
	$\NotationSymMat \iEigVec{\IDEl}{\NotationSymMat} = \iEig{\IDEl}{\NotationSymMat} \iEigVec{\IDEl}{\NotationSymMat}$
	and $ \iEigVec{\IDEl}{\NotationSymMat}^{\MyTRANSPO} \iEigVec{\IDEl}{\NotationSymMat} =1$
	
	\item
	${\Expect[]{   \NotationVec( {\NonArgvecAB{}} )  }}$:
	the expectation $\int  \NotationVec( {\NonArgvecAB{}} ) \PDF{\NonArgvecAB{}} \mathrm{d} {\NonArgvecAB{}} $ 
	of a function  $\NotationVec( {\NonArgvecAB{}} )$ 
	with respect to a random vector ${\NonArgvecAB{}} $ 
	obeying a probability density function (PDF) $\PDF{\NonArgvecAB{}}$

	\item
	${\CondExpectTI[]{    \NotationVec( {\vecTIAB{}}  )   }}$:
	the conditional expectation $\int   \NotationVec( {\vecTIAB{}} )   {\CondPDF{\NonArgvecAB{}}{\TIUnc}} \mathrm{d} {\NonArgvecAB{}} $ 
	with respect to $ {\vecTIAB{}}  $ 
	obeying a conditional PDF $ {\CondPDF{\NonArgvecAB{}}{\TIUnc}}$
	given ${\TIUnc}$
	
	\item
	${\CondExpectAllTV[]{    \NotationVec( {\NonArgvecAB{}}({\TVUnc{0}},{\TVUnc{1}},\dots) )   }{\MybT}{\MyT}}$:
	the conditional expectation $\int  \NotationVec( {\NonArgvecAB{}}({\TVUnc{0}},{\TVUnc{1}},\dots) ) {\CondPDF{\NonArgvecAB{}}{  {\TVUnc{0}},{\TVUnc{1}},\dots  }} \mathrm{d} {\NonArgvecAB{}} $ 
	with respect to ${\NonArgvecAB{}}({\TVUnc{0}},{\TVUnc{1}},\dots) $ 
	obeying a conditional PDF $ {\CondPDF{\NonArgvecAB{}}{ {\TVUnc{0}},{\TVUnc{1}},\dots }}$
	given a sequence $({\TVUnc{0}},{\TVUnc{1}},\dots)$
	
	\item
	${\CondCovTI[]{    \NotationVec( {\vecTIAB{}} )   }}
	:={\CondExpectTI[]{    \NotationVec( {\vecTIAB{}} )   \NotationVec( {\vecTIAB{}} )^{\MyTRANSPO}   }}
	-
	{\CondExpectTI[]{  \NotationVec( {\vecTIAB{}} )    }}
	{\CondExpectTI[]{  \NotationVec( {\vecTIAB{}} )    }}^{\MyTRANSPO}  
	$:
	the conditional covariance 
	given ${\TIUnc}$

\end{itemize}

\section{Second moment polytopic systems with problem settings}\label{sec_problem}

\subsection{Target systems described by second moment polytopes}

Consider the following uncertain stochastic linear system: 
\begin{align}
{\State{\MyT+1}} 
&\;
= {\DriftMat{\MyT}} {\State{\MyT}}
+ {\InMat{\MyT}} {\Input{\MyT}}
, \label{eq:def_sys}
\\
{\vecTVAB{\MyT}}
&:= \VEC{   [{\DriftMat{\MyT}}, {\InMat{\MyT}}]  }
\sim {\CondPDF{\NonArgvecAB{\MyT}}{\TVUnc{\MyT}}}
,
\end{align}
where ${\State{\MyT}} \in \mathbb{R}^{\DimX}$, ${\Input{\MyT}} \in \mathbb{R}^{\DimU}$, and  ${\TVUnc{\MyT}} \in \DomTVUnc \subset \mathbb{R}^{\DimTVUnc}$ denote the state, control input, and TV uncertain parameter for the discrete time $\MyT \in \{0, 1, 2, \dots \}$, respectively.
The initial state ${\State{0}}$ is deterministic, and the uncertain parameter ${\TVUnc{\MyT}}$ can be stochastic or deterministic.
Let ${\vecTVAB{\MyT}} \in  \mathbb{R}^{\DimX(\DimX+\DimU)}  $ be the vectorization of the stochastic system matrices ${\DriftMat{\MyT}} \in  \mathbb{R}^{\DimX \times \DimX}$ and ${\InMat{\MyT}} \in \mathbb{R}^{\DimX \times \DimU}$.
The stochastic parameter ${\vecTVAB{\MyT}}$ obeys a PDF ${\CondPDF{\NonArgvecAB{\MyT}}{\TVUnc{\MyT}}}$ independently with respect to $\MyT$.
The PDF is uncertain because it depends on the uncertain parameter ${\TVUnc{\MyT}}$.

To characterize this general uncertain stochastic system \eqref{eq:def_sys},
we propose the notion of a second moment polytope.
The notion focuses on the second moment of the stochastic parameter ${\vecTVAB{\MyT}}$ that is included in a polytope. 

\begin{definition}[{\MyHighlight{Second moment polytope}}]\label{def:TVSMP}
The system \eqref{eq:def_sys} is said to be second moment polytopic (SMP) if 
there exist a positive integer $\NUMpoly$, vertices ${\polySMvecAB{\IDpoly}} \in  {\SetSymMat{\DimX(\DimX+\DimU)}}$ for $\IDpoly \in \{ 1,2,\dots,\NUMpoly\}$, and a function $\NonArgMapPolyW: \DomTVUnc \to \DomPolytope$
that satisfy
\begin{align}
&\forall \MyT,\;
\forall {\TVUnc{0}},{\TVUnc{1}} , {\dots} \in \DomTVUnc
,\;
\nonumber\\
&
\quad
{\CondExpectTV[\big]{   {\vecTVAB{\MyT}}{\vecTVAB{\MyT}^{\MyTRANSPO}}    }{\MyT}}
=
\sum_{\IDpoly=1}^{\NUMpoly}  {\El{\MapPolyW{\TVUnc{\MyT}}}{\IDpoly}}  {\polySMvecAB{\IDpoly}}
,
\label{eq:SMP_AA_AB_BB}
\end{align}
where the codomain $\DomPolytope$ is the $\NUMpoly$-dimensional set:
\begin{align}
\DomPolytope
& :=
\Big\{ 
\NonArgMapPolyW \in \mathbb{R}^{\NUMpoly} 
\Big|
\forall \IDpoly, 
{\El{\NonArgMapPolyW}{\IDpoly}}
\geq 0
,
\sum_{\IDpoly=1}^{\NUMpoly} {\El{\NonArgMapPolyW}{\IDpoly}}  = 1
\Big\}
.\label{eq:def_DomPolytope}
\end{align}
		
\end{definition}

\begin{definition}[{\MyHighlight{Time-invariant/varying SMP}}]\label{def:TISMP}
	An SMP system is said to be TI SMP if 
	${\TVUnc{\MyT}}$ is TI, that is, ${\TVUnc{\MyT}}=\TIUnc$ holds for all $\MyT$ with a constant $\TIUnc$.		
	Otherwise, the system is said to be TV SMP.
\end{definition}

We introduce one simple example of SMP systems while other various examples are derived in Section \ref{sec_variety_SMP}.
\begin{example}[{\MyHighlight{Simple example of SMP systems}}]\label{ex:SMP}
Suppose that ${\vecTVAB{\MyT}}$, $\DomTVUnc$, and $\NUMpoly$ satisfy
\begin{align}
	{\CondExpectTV{\vecTVAB{\MyT}}{\MyT}}
	&
	= \ConstMeanvecAB 
	,
	\\
	{\CondCovTV[\big]{ \vecTVAB{\MyT} }{\MyT}} 
	&
	= \sum_{\IDpoly =1}^{\DimTVUnc} {\El{\TVUnc{\MyT}}{\IDpoly}}  {\polyCovvecAB{\IDpoly}} 
	,
	\\
	\DomTVUnc	& \subseteq	\DomPolytope
	,
	\\
	\NUMpoly&=\DimTVUnc
	,  \label{eq:setting_uncCov_NUMpoly}
\end{align}		
where $\ConstMeanvecAB \in \mathbb{R}^{\DimX ( \DimX + \DimU) } $ and ${\polyCovvecAB{\IDpoly}} \in {\SetSymMat{\DimX ( \DimX + \DimU)}}$ are constants.
Then, the system \eqref{eq:def_sys} is SMP with
\begin{align}	
	{\MapPolyW{\TVUnc{\MyT}}}&={\TVUnc{\MyT}}
	,\label{eq:setting_uncCov_MapPolyW}
	\\
	{\polySMvecAB{\IDpoly}}&=\ConstMeanvecAB \ConstMeanvecAB^{\MyTRANSPO} + {\polyCovvecAB{\IDpoly}}
	.  
\end{align}	
\end{example}

To discuss stability and controller design for the system \eqref{eq:def_sys}, the following assumptions are used throughout this paper.

\begin{assumption}[{\MyHighlight{Second moment polytope}}]\label{ass:StoParam}
	{~}
	\begin{enumerate}
		\item\label{item_ass_SMP}
		The system \eqref{eq:def_sys} is SMP.

		\item\label{item_ass_KnownSMPVertices}
		Vertices ${\polySMvecAB{\IDpoly}}$ and $\NUMpoly$ satisfying \eqref{eq:SMP_AA_AB_BB}
		are known
		although it is not required that the PDF ${\CondPDF{\NonArgvecAB{\MyT}}{\TVUnc{\MyT}}}$ is known.
	
		\item\label{item_ass_unknown_sto_param}	
		For any $\MyT$, the values of ${\TVUnc{\MyT}}$ and ${\vecTVAB{\MyT}}$ are unknown.

		\item\label{item_ass_independentPDF}	
		For any $\MyT$, the PDF ${\CondPDF{\NonArgvecAB{\MyT}}{\TVUnc{\MyT}}}$
		is Lebesgue measurable on $ \mathbb{R}^{\DimX(\DimX+\DimU)}$ and the following independence holds:
		\begin{align}
		&
		\forall \MybT\geq 1,\;
		{\CondPDF{  {\NonArgvecAB{0}},{\NonArgvecAB{1}}, \dots, {\NonArgvecAB{\MybT}} }{   {\TVUnc{0}}, {\TVUnc{1}}, \dots           }}
		=
		\prod_{\MyT=0}^{\MybT}
		{\CondPDF{\NonArgvecAB{\MyT}}{\TVUnc{\MyT}}}
		.
		\end{align}

	\end{enumerate}		
\end{assumption}

To justify Assumption \ref{ass:StoParam} \ref{item_ass_SMP} and \ref{item_ass_KnownSMPVertices}, Section \ref{sec_variety_SMP} presents how to transform various systems into SMP forms.
Assumption \ref{ass:StoParam} \ref{item_ass_unknown_sto_param} and \ref{item_ass_independentPDF} are formal descriptions of the problem setting.

\subsection{Problem statements}

A linear feedback controller is applied to the system \eqref{eq:def_sys}:
\begin{align}
{\Input{\MyT}} &= - \FBgain {\State{\MyT}}
, \label{eq:def_FBcontroller}
\\
{\State{\MyT+1}} 
&=( {\DriftMat{\MyT}} - {\InMat{\MyT}} \FBgain ) {\State{\MyT}}
, \label{eq:def_CLsys}
\end{align}	
where $\FBgain \in \mathbb{R}^{ \DimU \times \DimX } $ is a feedback gain.
This study focuses on the following two types of statistical stability for the feedback system {\eqref{eq:def_CLsys}} that is SMP because of Assumption \ref{ass:StoParam}.

\begin{definition}[{\MyHighlight{Robust mean-square stability}}]\label{def:RMSstable}
	The SMP system {\eqref{eq:def_CLsys}} is said to be robustly mean-square (MS) stable if
	\begin{align}
	\forall {\State{0}} \in  \mathbb{R}^{\DimX}
	,\;
	\forall {\TVUnc{0}},{\TVUnc{1}} , {\dots} \in \DomTVUnc
	,\;\;\;
	\lim_{\MyT \to \infty}
	{\CondExpectAllTV[\big]{  \|    {\State{\MyT}}    \|^{2}   }{\MybT}{\MyT}}
	=0
	.\label{eq:def_RMSstable}
	\end{align}
\end{definition}
\begin{definition}[{\MyHighlight{Exponential robust mean-square stability}}]\label{def:ERMSstable}
	The SMP system {\eqref{eq:def_CLsys}} is said to be exponentially robustly MS stable if there exist  $\EMScoef \in (0,\infty)$ and $\EMSrate \in (0,1)$ such that 
	\begin{align}
	&\forall {\State{0}} \in  \mathbb{R}^{\DimX}
	,\;	
	\forall {\TVUnc{0}},{\TVUnc{1}} , {\dots} \in \DomTVUnc
	,\;
	\forall \MyT 
	,\;
	\VisibleColTwo{\qquad\qquad
	\nonumber\\&\qquad\qquad}
	\sqrt{
	{\CondExpectAllTV[\big]{  \|    {\State{\MyT}}    \|^{2}   }{\MybT}{\MyT}}
	}
	\leq \EMScoef  \|{\State{0}}\| \EMSrate^{\MyT}
	.\label{eq:def_ERMSstable}
	\end{align}
\end{definition}

We state the following two main problems:

\textbf{\textit{Problem 1 (stability analysis).}}
Find necessary and/or sufficient conditions that the SMP system {\eqref{eq:def_CLsys}} is (exponentially) robustly MS stable for a given feedback gain $\FBgain$.

\textbf{\textit{Problem 2 (controller design).}}
Design a feedback gain $\FBgain$ such that the SMP system {\eqref{eq:def_CLsys}} is (exponentially) robustly MS stable.

\section{Proposed method}\label{sec_method}

\subsection{Overview}\label{sec_overview}

We solve Problems 1 and 2 associated with the SMP system {\eqref{eq:def_CLsys}}.
An analysis of the SMP system suffers from two factors: the uncertainty of $\TVUnc{\MyT}$ and the randomness of ${\vecTVAB{\MyT}}$ given $\TVUnc{\MyT}$.	
Our key idea to overcome this difficulty is to expand the SMP system so that the randomness is excluded.
The expanded system with only the uncertain $\TVUnc{\MyT}$ is included in a deterministic polytopic system.
Such an exclusion simplifies the stability analysis and controller design for the SMP system.
These details are described in Section \ref{sec_expandedsys}.

Section \ref{sec_stability} presents solutions to Problem 1. 
We show that the (exponential) robust MS stability of the SMP system reduces to stability of the expanded system.
Stability conditions of the expanded system can be derived based on existing results for deterministic polytopic systems.
We obtain LMI-based sufficient conditions that the SMP system is stable for a given feedback gain $\FBgain$.

Sections \ref{sec_control_QMIs} and \ref{sec_control_SDPs} provide solutions to Problem 2, starting from the solutions to Problem 1.
Unfortunately, the derived LMI-based conditions become cubic matrix inequalities (CMIs) if the feedback gain is not given but is to be designed.
In Section \ref{sec_control_QMIs}, we transform the CMIs into simpler QMIs.
The QMIs can be solved via some optimization techniques.
Moreover, in Section \ref{sec_control_SDPs}, the QMIs are relaxed as an iterative convex program because  they are still non-convex problems.
We show that the QMIs are equivalent to LMIs with a rank-one constraint.
Solutions to the constrained LMIs are approximately obtained via the iteration of a convex SDP.
Finally, the solutions provide stabilizing feedback gains $\FBgain$.

Figure \ref{fig:overview} illustrates an overview of the proposed method, which consists of certain theorems and corollaries.
Corollaries \ref{thm:solution_to_Problem1} and \ref{thm:solution_to_Problem2} summarize solutions to Problems 1 and 2, respectively.

\newcommand{\MyFigSIZEoverview}{0.63}

\begin{figure}[!t]
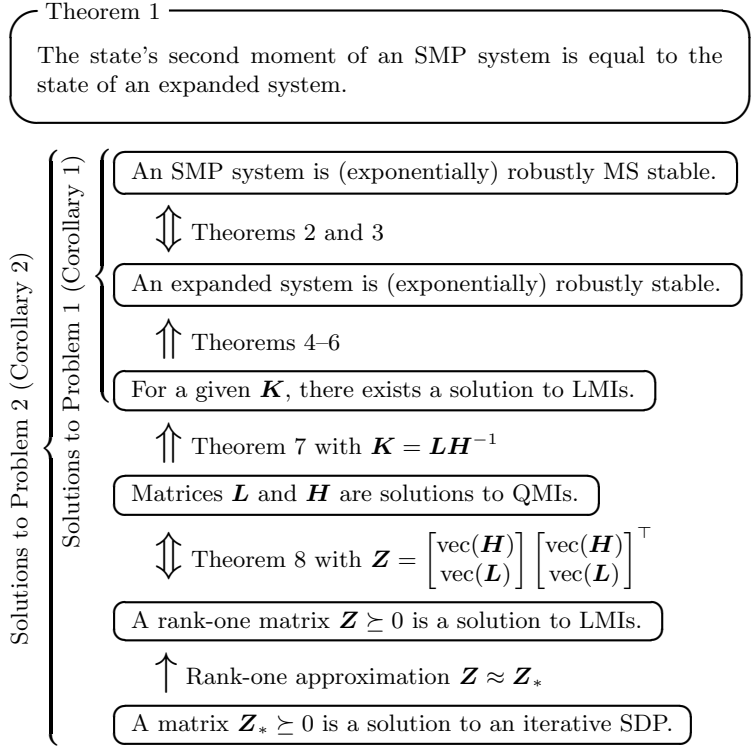
		
	\centering	
\begin{minipage}{\MyFigSIZEoverview\linewidth}
{\small

\begin{itembox}[l]{Theorem \ref{thm:sys_to_exsys}}	
The state's second moment of an SMP system is equal to the state of an expanded system. 
\end{itembox}

\vspace*{0.5\baselineskip}

\begin{minipage}{0.07\linewidth}
\rotatebox{90}{
\shortstack{Solutions to Problem 2 (Corollary \ref{thm:solution_to_Problem2})
\\ 
$\overbrace{\hspace*{11.5\linewidth}}$}
}
\end{minipage}%
\begin{minipage}{0.07\linewidth}
\rotatebox{90}{
\hspace*{3.6\linewidth}
\shortstack{Solutions to Problem 1 (Corollary \ref{thm:solution_to_Problem1}) 
\\ 
$ \hspace*{2.9\linewidth} \overbrace{\hspace*{4.9\linewidth}}$}
}
\end{minipage}%
\begin{minipage}{0.84\linewidth}

\Ovalbox{
An SMP system is (exponentially) robustly MS stable.
}

\vspace*{0.3\baselineskip}
{\LARGE$\quad\Updownarrow$}
Theorems \ref{thm:Equivarence_RMS} and \ref{thm:Equivarence_ERMS}
\vspace*{0.3\baselineskip}

\Ovalbox{
An expanded system is (exponentially) robustly stable.
}

\vspace*{0.3\baselineskip}
{\LARGE$\quad\Uparrow$}
Theorems \ref{thm:MSstab_TISMP}--\ref{thm:ERS_RS_TVexpanded}
\vspace*{0.3\baselineskip}

\Ovalbox{
For a given $\FBgain$, there exists a solution to LMIs.
}

\vspace*{0.3\baselineskip}
{\LARGE$\quad\Uparrow$}
Theorem \ref{thm:stability_QMI} with $\FBgain =  \AIMFBgain \AddInvMat^{-1} $
\vspace*{0.3\baselineskip}

\Ovalbox{
Matrices $\AIMFBgain$ and $\AddInvMat $ are solutions to QMIs.
}

\vspace*{0.3\baselineskip}
{\LARGE$\quad\Updownarrow$}
Theorem \ref{thm:stability_QMIwithRankOne} with
$\dummyHMMat
=
\begin{bmatrix}
\VEC{\AddInvMat} \\	 \VEC{\AIMFBgain} 
\end{bmatrix}
\begin{bmatrix}
\VEC{\AddInvMat} \\	 \VEC{\AIMFBgain} 
\end{bmatrix}^{\MyTRANSPO}$
\vspace*{0.3\baselineskip}

\Ovalbox{
A rank-one matrix $\dummyHMMat\succeq 0$ is a solution to LMIs.
}

\vspace*{0.3\baselineskip}
{\LARGE$\quad\uparrow$}
Rank-one approximation
$
\dummyHMMat \approx \optdummyHMMat
$
\vspace*{0.3\baselineskip}

\Ovalbox{
	A matrix $\optdummyHMMat \succeq 0$ is a solution to an iterative SDP.
}

\end{minipage}
}
\end{minipage}
\caption{Overview of the proposed method.}
\label{fig:overview}
\end{figure}

\subsection{Key idea: Development of expanded systems}\label{sec_expandedsys} 

In this subsection, we propose the expanded system to the SMP system {\eqref{eq:def_CLsys}}.
First, let us introduce an operator for developing the expanded system.
\begin{definition}[{\MyHighlight{Compression operator $\NonArgElimi$}}]
	For any $\OpeNotationMat  \in \mathbb{R}^{\DimX^{2} \times \DimX^{2}}$, 
	the compression operator $\NonArgElimi: \mathbb{R}^{\DimX^{2} \times \DimX^{2}} \to \mathbb{R}^{\DimexX \times \DimexX}$ is defined by
	\begin{align}
	{\Elimi{   \OpeNotationMat   }}&:=\EliMat \OpeNotationMat  \DupMat	
	,
	\\
	\DimexX &:= \DimX(\DimX+1)/2 
	,\label{eq:def_DimexX}
	\end{align}	
	where the elimination matrix $\EliMat \in \mathbb{R}^{ \DimexX \times \DimX^{2} }$ and duplication matrix $\DupMat \in \mathbb{R}^{ \DimX^{2} \times \DimexX }$ are defined such that 
	$\EliMat \VEC{\OpeNotationbMat}=\VECH{\OpeNotationbMat}$ for any $\OpeNotationbMat \in \mathbb{R}^{\DimX \times \DimX}$
	and
	$\DupMat \VECH{\OpeNotationSymMat}=\VEC{\OpeNotationSymMat}$ for any $\OpeNotationSymMat \in {\SetSymMat{\DimX}}$ hold, respectively.
\end{definition}
\begin{remark}[{\MyHighlight{Details of $\NonArgElimi$}}]
	The details of their definitions are described in \cite[Definiitons 3.1a, 3.1b, 3.2a, and 3.2b]{Magnus80}.
	As an example, $\EliMat$ and $\DupMat$ for $\DimX=2$ are given as follows:
	\begin{align}
	\EliMat
	&=
	\begin{bmatrix}
		1 & 0 & 0 & 0 \\
		0 & 1 & 0 & 0 \\
		0 & 0 & 0 & 1 
	\end{bmatrix}
	, \label{eq:ex_EliMat}
	\\	
	\DupMat
	&=
	\begin{bmatrix}
	1 & 0 & 0 \\
	0 & 1 & 0 \\
	0 & 1 & 0 \\
	0 & 0 & 1 
	\end{bmatrix}
	. \label{eq:ex_DupMat}	
	\end{align}
\end{remark}

We define the expanded system, using the vertices ${\polySMvecAB{\IDpoly}}$ and $\NUMpoly$ of the SMP system and the compression operator $\NonArgElimi$.
\begin{definition}[{\MyHighlight{Expanded system}}]
The $\DimexX$-dimensional expanded system to the SMP system \eqref{eq:def_CLsys} is defined by
\begin{align}
{\exState{\MyT+1}} 
&= \Elimi{\exCLMat{\FBgain}}  {\exState{\MyT}}
, \label{eq:def_exsys}
\\
{\exCLMat{\FBgain}}
&:= \sum_{\IDpoly=1}^{\NUMpoly}
{\El{\exTVUnc{\MyT}}{\IDpoly}}
{\expolyCLMat{\IDpoly}{\FBgain}}
. \label{eq:def_exCLMat}
\end{align}	
The symbols ${\exState{\MyT}} \in \mathbb{R}^{ \DimexX }$ and  ${\exTVUnc{\MyT}} \in \ImagePolyW$ denote the expanded state and expanded uncertain parameter, respectively, at the time $\MyT$, where 
$\ImagePolyW$ is the image of $\NonArgMapPolyW$ in Definition \ref{def:TVSMP}:
\begin{align}
\ImagePolyW
& :=
\big\{ 
{\MapPolyW{\TIUnc}}
\big|
 {\TIUnc} \in \DomTVUnc
\big\}
\subseteq \DomPolytope
\subset \mathbb{R}^{\NUMpoly} 
.\label{eq:def_DompolyW}
\end{align}
For each $\IDpoly\in\{1,\dots,\NUMpoly\}$,  ${\expolyCLMat{\IDpoly}{\FBgain}} \in \mathbb{R}^{ \DimX^{2} \times \DimX^{2} }$  in \eqref{eq:def_exCLMat} are defined as follows: 
\begin{align}
{\expolyCLMat{\IDpoly}{\FBgain}}
&:=
{\expolyAAMat{\IDpoly}}
-{\expolyABMat{\IDpoly}} ({\Identity{\DimX}} \otimes \FBgain)
\VisibleColTwo{\nonumber\\&\quad}
-{\expolyBAMat{\IDpoly}} (\FBgain \otimes {\Identity{\DimX}})
+{\expolyBBMat{\IDpoly}} (\FBgain \otimes \FBgain)
.
\end{align}	
The matrices 
${\expolyAAMat{\IDpoly}} \in \mathbb{R}^{\DimX^{2} \times \DimX^{2}}$,
${\expolyABMat{\IDpoly}} \in \mathbb{R}^{\DimX^{2} \times \DimX\DimU}$,
${\expolyBAMat{\IDpoly}} \in \mathbb{R}^{\DimX^{2} \times \DimX\DimU}$, and 
${\expolyBBMat{\IDpoly}} \in \mathbb{R}^{\DimX^{2} \times \DimU^{2}}$ are given by 
\begin{align}
{\El{\expolyAAMat{\IDpoly}}{\vectorWildCard, \DimX(\IDbEl-1) +\IDEl   }}&:=\VEC{ {\blockpolySMvecAB{\IDpoly}{\IDEl       }{\IDbEl}}   }
,\label{eq:def_expolyAAMat}
\\
{\El{\expolyABMat{\IDpoly}}{\vectorWildCard, \DimU(\IDbEl-1) +\IDcEl  }}&:=\VEC{ {\blockpolySMvecAB{\IDpoly}{\DimX+\IDcEl}{\IDbEl}}   }
,\\
{\El{\expolyBAMat{\IDpoly}}{\vectorWildCard, \DimX(\IDdEl-1) +\IDEl   }}&:=\VEC{ {\blockpolySMvecAB{\IDpoly}{\IDEl       }{\DimX+\IDdEl}}   }
,\\
{\El{\expolyBBMat{\IDpoly}}{\vectorWildCard, \DimU(\IDdEl-1) +\IDcEl  }}&:=\VEC{ {\blockpolySMvecAB{\IDpoly}{\DimX+\IDcEl}{\DimX+\IDdEl}}   }
,\label{eq:def_expolyBBMat}
\end{align}
for $\IDEl , \IDbEl \in \{1,\dots, \DimX\}$ and $\IDcEl, \IDdEl \in \{1,\dots, \DimU\}$,
using the block matrix form of ${\polySMvecAB{\IDpoly}}$:
\begin{align}
{\polySMvecAB{\IDpoly}}
&=:
\begin{bmatrix}
{\blockpolySMvecAB{\IDpoly}{1}{1}} & \cdots & {\blockpolySMvecAB{\IDpoly}{1}{\DimX+\DimU}} \\
\vdots & \ddots & \vdots \\
{\blockpolySMvecAB{\IDpoly}{\DimX+\DimU}{1}} & \cdots & {\blockpolySMvecAB{\IDpoly}{\DimX+\DimU}{\DimX+\DimU}} \\
\end{bmatrix}
,
\end{align}
where ${\blockpolySMvecAB{\IDpoly}{\IDEl}{\IDbEl}}$ for $\IDEl , \IDbEl \in \{1,\dots, \DimX+\DimU\}$ are $\DimX \times \DimX$ matrices.
\end{definition}
\begin{definition}[{\MyHighlight{Time-invariant/varying expanded systems}}]\label{def:TITVexpanded}
	The expanded system \eqref{eq:def_exsys} is said to be TI (resp. TV) if 
	the corresponding SMP system \eqref{eq:def_CLsys} is TI (resp. TV).
\end{definition}

\begin{example}[{\MyHighlight{Demonstration of an expanded system}}]
For the SMP system in Example \ref{ex:SMP} with $\DimX=2$ and $\DimU=1$, 
we develop the expanded system, where $\DimexX=3$ is given by \eqref{eq:def_DimexX}.	
Because of ${\polySMvecAB{\IDpoly}}=\ConstMeanvecAB \ConstMeanvecAB^{\MyTRANSPO} + {\polyCovvecAB{\IDpoly}} \in {\SetSymMat{6}}$,
for any $\IDEl ,\IDbEl \in \{1,2,3\}$,
we obtain
\begin{align}
{\blockpolySMvecAB{\IDpoly}{\IDEl}{\IDbEl}}
&=
\begin{bmatrix}
{\El{\polySMvecAB{\IDpoly}}{ 2\IDEl -1 , 2\IDbEl -1 }} & {\El{\polySMvecAB{\IDpoly}}{ 2\IDEl -1 , 2\IDbEl  }} \\
{\El{\polySMvecAB{\IDpoly}}{ 2\IDEl    , 2\IDbEl -1 }} & {\El{\polySMvecAB{\IDpoly}}{ 2\IDEl    , 2\IDbEl  }}
\end{bmatrix}
.
\end{align}
Because of \eqref{eq:def_expolyAAMat},
${\expolyAAMat{\IDpoly}}$ is given by
\begin{align}
{\expolyAAMat{\IDpoly}}
&
=
[
\VEC{ {\blockpolySMvecAB{\IDpoly}{1}{1}} },
\VEC{ {\blockpolySMvecAB{\IDpoly}{2}{1}} },
\VEC{ {\blockpolySMvecAB{\IDpoly}{1}{2}} },
\VEC{ {\blockpolySMvecAB{\IDpoly}{2}{2}} }
]
\nonumber \\&
=
\begin{bmatrix}
{\El{\polySMvecAB{\IDpoly}}{ 1, 1 }} & {\El{\polySMvecAB{\IDpoly}}{ 3 , 1 }} & {\El{\polySMvecAB{\IDpoly}}{ 1 , 3 }} & {\El{\polySMvecAB{\IDpoly}}{ 3 , 3 }} \\
{\El{\polySMvecAB{\IDpoly}}{ 2, 1 }} & {\El{\polySMvecAB{\IDpoly}}{ 4 , 1 }} & {\El{\polySMvecAB{\IDpoly}}{ 2 , 3 }} & {\El{\polySMvecAB{\IDpoly}}{ 4 , 3 }} \\
{\El{\polySMvecAB{\IDpoly}}{ 1, 2 }} & {\El{\polySMvecAB{\IDpoly}}{ 3 , 2 }} & {\El{\polySMvecAB{\IDpoly}}{ 1 , 4 }} & {\El{\polySMvecAB{\IDpoly}}{ 3 , 4 }}  \\
{\El{\polySMvecAB{\IDpoly}}{ 2, 2 }} & {\El{\polySMvecAB{\IDpoly}}{ 4 , 2 }} & {\El{\polySMvecAB{\IDpoly}}{ 2 , 4 }} & {\El{\polySMvecAB{\IDpoly}}{ 4 , 4 }} 
\end{bmatrix}
.
\end{align}
In a similar manner, we derive 
\begin{align}
{\expolyABMat{\IDpoly}}
&
=
[
\VEC{ {\blockpolySMvecAB{\IDpoly}{3}{1}} },
\VEC{ {\blockpolySMvecAB{\IDpoly}{3}{2}} }
]
\VisibleColTwo{\nonumber \\&}
=
\begin{bmatrix}
{\El{\polySMvecAB{\IDpoly}}{ 5, 1 }} & {\El{\polySMvecAB{\IDpoly}}{ 5 , 3 }} \\
{\El{\polySMvecAB{\IDpoly}}{ 6, 1 }} & {\El{\polySMvecAB{\IDpoly}}{ 6 , 3 }} \\
{\El{\polySMvecAB{\IDpoly}}{ 5, 2 }} & {\El{\polySMvecAB{\IDpoly}}{ 5 , 4 }}  \\
{\El{\polySMvecAB{\IDpoly}}{ 6, 2 }} & {\El{\polySMvecAB{\IDpoly}}{ 6 , 4 }} 
\end{bmatrix}
,
\\
{\expolyBAMat{\IDpoly}}
&
=
[
\VEC{ {\blockpolySMvecAB{\IDpoly}{1}{3}} },
\VEC{ {\blockpolySMvecAB{\IDpoly}{2}{3}} }
]
\VisibleColTwo{\nonumber \\&}
=
\begin{bmatrix}
{\El{\polySMvecAB{\IDpoly}}{ 1, 5 }} & {\El{\polySMvecAB{\IDpoly}}{ 3 , 5 }} \\
{\El{\polySMvecAB{\IDpoly}}{ 2, 5 }} & {\El{\polySMvecAB{\IDpoly}}{ 4 , 5 }} \\
{\El{\polySMvecAB{\IDpoly}}{ 1, 6 }} & {\El{\polySMvecAB{\IDpoly}}{ 3 , 6 }}  \\
{\El{\polySMvecAB{\IDpoly}}{ 2, 6 }} & {\El{\polySMvecAB{\IDpoly}}{ 4 , 6 }} 
\end{bmatrix}
,
\\
{\expolyBBMat{\IDpoly}}
&
=\VEC{ {\blockpolySMvecAB{\IDpoly}{3}{3}} }
\VisibleColTwo{\nonumber \\&}
=
[
{\El{\polySMvecAB{\IDpoly}}{ 5, 5 }}, 
{\El{\polySMvecAB{\IDpoly}}{ 6, 5 }},
{\El{\polySMvecAB{\IDpoly}}{ 5, 6 }},
{\El{\polySMvecAB{\IDpoly}}{ 6, 6 }}
]^{\MyTRANSPO}
.
\end{align}
Subsequently, the $4 \times 4$ matrices ${\expolyCLMat{\IDpoly}{\FBgain}}$ and ${\exCLMat{\FBgain}}$ are defined.
Finally, using \eqref{eq:ex_EliMat} and \eqref{eq:ex_DupMat}, the $3 \times 3$ closed-loop matrix $ \Elimi{\exCLMat{\FBgain}}$ of the expanded system \eqref{eq:def_exsys} is obtained.
\end{example}

We derive a key connection between the expanded system \eqref{eq:def_exsys} and the SMP system \eqref{eq:def_CLsys}.

\begin{theorem}[{\MyHighlight{Expanded system}}]\label{thm:sys_to_exsys}
	For any ${\State{0}} \in \mathbb{R}^{\DimX}$ and any ${\TVUnc{\MyT}}  \in \DomTVUnc $ for $\MyT \in \{0,1,2,\dots\}$, suppose the following relations:
	\begin{align}
	{\exState{0}} &= \VECH{{\State{0}} \State{0}^{\MyTRANSPO} }
	, \label{eq:ass_ini_condition_State}
	\\
	{\exTVUnc{\MyT}}  &= {\MapPolyW{\TVUnc{\MyT}}}
	. \label{eq:ass_ini_condition_TISto}
	\end{align}
Then, the following property holds for all $\MyT \in \{0,1,2,\dots\}$: 	
	\begin{align}
	{\exState{\MyT}} =  
	{\CondExpectAllTV[\big]{  \VECH{{\State{\MyT}} \State{\MyT}^{\MyTRANSPO} }   }{\MybT}{\MyT}}
	. \label{eq:exState_is_2ndM_State}
	\end{align}		
\end{theorem}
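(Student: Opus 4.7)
The plan is to prove the claim by induction on $\MyT$. The base case $\MyT=0$ is immediate from \eqref{eq:ass_ini_condition_State} since $\State{0}$ is deterministic, so the trivial conditional expectation equals $\VECH{\State{0}\State{0}^{\MyTRANSPO}}$. For the inductive step, suppose the identity \eqref{eq:exState_is_2ndM_State} holds at time $\MyT$; I must show it also holds at time $\MyT+1$. Starting from the closed-loop dynamics \eqref{eq:def_CLsys}, write $\BRIEFexCLMat{\MyT} := \DriftMat{\MyT} - \InMat{\MyT}\FBgain$ and observe
\begin{equation*}
\State{\MyT+1}\State{\MyT+1}^{\MyTRANSPO}
= \BRIEFexCLMat{\MyT}\,\State{\MyT}\State{\MyT}^{\MyTRANSPO}\,\BRIEFexCLMat{\MyT}^{\MyTRANSPO}.
\end{equation*}
Since $\State{\MyT}\State{\MyT}^{\MyTRANSPO}$ is symmetric, applying $\VEC{\cdot}$ and then $\EliMat$ together with $\DupMat\VECH{\cdot}=\VEC{\cdot}$ yields
\begin{equation*}
\VECH{\State{\MyT+1}\State{\MyT+1}^{\MyTRANSPO}}
= \EliMat\,(\BRIEFexCLMat{\MyT}\otimes\BRIEFexCLMat{\MyT})\,\DupMat\,\VECH{\State{\MyT}\State{\MyT}^{\MyTRANSPO}}.
\end{equation*}

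The next step is to take the conditional expectation given $\TVUnc{0},\TVUnc{1},\dots$. Because $\State{\MyT}$ depends only on $\vecTVAB{0},\ldots,\vecTVAB{\MyT-1}$, and Assumption \ref{ass:StoParam}\ref{item_ass_independentPDF} guarantees independence of $\vecTVAB{\MyT}$ from these given the uncertain sequence, the expectation factorizes as
\begin{equation*}
\CondExpectAllTV[\big]{\VECH{\State{\MyT+1}\State{\MyT+1}^{\MyTRANSPO}}}{\MybT}{\MyT}
= \EliMat\,\CondExpectTV[\big]{\BRIEFexCLMat{\MyT}\otimes\BRIEFexCLMat{\MyT}}{\MyT}\,\DupMat\,
\CondExpectAllTV[\big]{\VECH{\State{\MyT}\State{\MyT}^{\MyTRANSPO}}}{\MybT}{\MyT-1}.
\end{equation*}
The induction hypothesis identifies the last factor with $\exState{\MyT}$. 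It therefore suffices to show that
\begin{equation*}
\CondExpectTV[\big]{\BRIEFexCLMat{\MyT}\otimes\BRIEFexCLMat{\MyT}}{\MyT}
= \sum_{\IDpoly=1}^{\NUMpoly} \El{\MapPolyW{\TVUnc{\MyT}}}{\IDpoly}\,\expolyCLMat{\IDpoly}{\FBgain},
\end{equation*}
so that, invoking \eqref{eq:ass_ini_condition_TISto} and the definition $\Elimi{\cdot}=\EliMat\cdot\DupMat$, the right-hand side becomes $\Elimi{\exCLMat{\FBgain}}\exState{\MyT}$, which equals $\exState{\MyT+1}$ by \eqref{eq:def_exsys}.

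The key computation is verifying the above second-moment identity. Expand $\BRIEFexCLMat{\MyT}\otimes\BRIEFexCLMat{\MyT}$ via bilinearity of the Kronecker product and the rule $(AB)\otimes(CD)=(A\otimes C)(B\otimes D)$ to get four terms proportional to $\DriftMat{\MyT}\otimes\DriftMat{\MyT}$, $(\DriftMat{\MyT}\otimes\InMat{\MyT})(\Identity{\DimX}\otimes\FBgain)$, $(\InMat{\MyT}\otimes\DriftMat{\MyT})(\FBgain\otimes\Identity{\DimX})$, and $(\InMat{\MyT}\otimes\InMat{\MyT})(\FBgain\otimes\FBgain)$. Since $\FBgain$ is deterministic, it suffices to check entry-wise that
\begin{equation*}
\CondExpectTV[\big]{\DriftMat{\MyT}\otimes\DriftMat{\MyT}}{\MyT}=\sum_{\IDpoly}\El{\MapPolyW{\TVUnc{\MyT}}}{\IDpoly}\expolyAAMat{\IDpoly},
\end{equation*}
together with the analogous identities for the cross and input blocks that reproduce $\expolyABMat{\IDpoly}$, $\expolyBAMat{\IDpoly}$, $\expolyBBMat{\IDpoly}$. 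Each such identity follows by translating the second-moment relation \eqref{eq:SMP_AA_AB_BB} into statements about $\CondExpectTV{W_{\alpha,\IDEl}W_{\gamma,\IDbEl}}{\MyT}$, where $W:=[\DriftMat{\MyT},\InMat{\MyT}]$, and matching with the block definitions \eqref{eq:def_expolyAAMat}--\eqref{eq:def_expolyBBMat}.

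The main obstacle I anticipate is bookkeeping the two different index conventions: the Kronecker product uses $(a,i)\mapsto(a-1)\DimX+i$ for both rows and columns, while the column layout of $\expolyAAMat{\IDpoly}$ is $(\IDEl,\IDbEl)\mapsto\DimX(\IDbEl-1)+\IDEl$ and its rows come from $\VEC{\cdot}$. A direct comparison appears to leave a permutation mismatch, but it is resolved precisely by the symmetry of $\polySMvecAB{\IDpoly}\in\SetSymMat{\DimX(\DimX+\DimU)}$, which enforces $\blockpolySMvecAB{\IDpoly}{\IDEl}{\IDbEl}[\alpha,\beta]=\blockpolySMvecAB{\IDpoly}{\IDbEl}{\IDEl}[\beta,\alpha]$. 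I will record this symmetry explicitly and use it to identify the two matrix expressions.
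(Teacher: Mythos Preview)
Your proposal is correct and follows essentially the same route as the paper's proof in Appendix~\ref{pf:sys_to_exsys}: induction on $\MyT$, the $\VEC{\cdot}/\VECH{\cdot}$ machinery with $\EliMat$ and $\DupMat$, conditional independence to factor the expectation, and the key identity that the conditional expectation of the Kronecker square of the closed-loop matrix equals $\exCLMat{\FBgain}$, obtained by expanding the Kronecker product and matching against \eqref{eq:def_expolyAAMat}--\eqref{eq:def_expolyBBMat}. Your anticipated permutation mismatch does not actually arise: the paper first establishes the column identities such as $[\tempAmat\otimes\tempAmat]_{\vectorWildCard,\DimX(\IDbEl-1)+\IDEl}=\VEC{\tempSMABvec{\IDEl}{\IDbEl}}$ for an arbitrary deterministic $\tempABvec=\VEC{[\tempAmat,\tempBmat]}$ (where they hold exactly by commutativity of scalar multiplication) and only then takes expectations, so no appeal to the symmetry of $\polySMvecAB{\IDpoly}$ is needed.
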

\begin{proof}
	The proof is described in Appendix \ref{pf:sys_to_exsys}.
\end{proof}
\begin{remark}[{\MyHighlight{Contribution of Theorem \ref{thm:sys_to_exsys}}}]
The state's second moment ${\CondExpectAllTV[\big]{  \VECH{{\State{\MyT}} \State{\MyT}^{\MyTRANSPO} }   }{\MybT}{\MyT}}$ of the SMP system \eqref{eq:def_CLsys} is represented by the state ${\exState{\MyT}}$ of the expanded system \eqref{eq:def_exsys}.
The expanded state ${\exState{\MyT}}$ is easier to analyze than the second moment because the expanded system is included in a deterministic polytope without suffering from the randomness of ${\vecTVAB{\MyT}}$ given $\TVUnc{\MyT}$.
The expanded system is constructed based on the Kronecker product $\otimes$ of the system matrices of the SMP system (the details are found in the proof in Appendix \ref{pf:sys_to_exsys}).
Although existing methods \cite{Hibey96,Ogura13} have used such a technique, they do not treat uncertain parameters $\TVUnc{\MyT}$.

\end{remark}
\begin{remark}[{\MyHighlight{Contribution of the compression operator $\NonArgElimi$}}]

	The operator $\NonArgElimi$ makes the expanded system \eqref{eq:def_exsys} low-dimensional.
	Whereas a straightforward expression of the second moment is ${\CondExpectAllTV[\big]{  \VEC{{\State{\MyT}} \State{\MyT}^{\MyTRANSPO} }   }{\MybT}{\MyT}}$ similar to \cite{Hibey96,Ogura13}, this includes duplicated components
	because of ${\El{{{\State{\MyT}}{\State{\MyT}}^{\MyTRANSPO} }}{\IDEl,\IDbEl}}={\El{{{\State{\MyT}}{\State{\MyT}}^{\MyTRANSPO} }}{\IDbEl,\IDEl}}$.
	The proposed expanded system with $\NonArgElimi$ handles the second moment by using the half vectorization $\VECH{{\State{\MyT}} \State{\MyT}^{\MyTRANSPO} } $ without the duplication.
	This expression is computationally efficient for $\DimX\geq 2$ because the dimension $\DimexX=\DimX(\DimX+1)/2$ of $\VECH{{\State{\MyT}}{\State{\MyT}}^{\MyTRANSPO} }$ is less than the dimension $\DimX^{2}$ of $\VEC{{\State{\MyT}}{\State{\MyT}}^{\MyTRANSPO} }$.	
	Whereas existing methods, e.g., \cite{Luo19TAC,Zhang12TAC}, have employed such a compression successfully, they do not treat uncertain parameters $\TVUnc{\MyT}$.

\end{remark}

\subsection{Solutions to Problem 1: Stability analysis}\label{sec_stability}

In this subsection, we solve Problem 1 by using the expanded system \eqref{eq:def_exsys} with Theorem \ref{thm:sys_to_exsys}.
We show that (exponential) robust stability of the expanded system is equivalent to the (exponential) robust MS stability of the SMP system \eqref{eq:def_CLsys}.
Given a feedback gain $\FBgain$, stability conditions for the expanded system are derived as LMIs.
Therefore, the LMIs are solutions to Problem 1.

First, the two stability notions of the expanded system \eqref{eq:def_exsys} are defined below.	
\begin{definition}[{\MyHighlight{Robust stability}}]
	The expanded system \eqref{eq:def_exsys} is said to be robustly stable if 
	\begin{align}
	\forall {\exState{0}}  \in  \mathbb{R}^{\DimexX}
	,\;
	\forall {\exTVUnc{0}},{\exTVUnc{1}} , {\dots} \in \ImagePolyW
	,\quad
	\lim_{\MyT \to \infty}
	\|    {\exState{\MyT}}    \|
	=0
	.\label{eq:def_Rstable}
	\end{align}
\end{definition}

\begin{definition}[{\MyHighlight{Exponential robust stability}}]
	The expanded system {\eqref{eq:def_exsys}} is said to be exponentially robustly stable if there exist $\exEMScoef \in (0,\infty)$ and $\exEMSrate \in (0,1)$ such that 
	\begin{align}
	\forall {\exState{0}}  \in  \mathbb{R}^{\DimexX}
	,\;	
	\forall {\exTVUnc{0}},{\exTVUnc{1}} , {\dots}  \in \ImagePolyW
	,\;
	\forall \MyT 
	,\quad
	\|    {\exState{\MyT}}    \|
	\leq \exEMScoef  \|{\exState{0}}\| \exEMSrate^{\MyT}
	.\label{eq:def_ERstable}
	\end{align}
\end{definition}

We derive equivalence of the stability between the expanded system and the SMP system as follows.

\begin{theorem}[{\MyHighlight{Equivalence of the stability}}]\label{thm:Equivarence_RMS}
	The SMP system {\eqref{eq:def_CLsys}}
	is robustly MS stable if and only if 
	the expanded system {\eqref{eq:def_exsys}}
	is robustly stable.
\end{theorem}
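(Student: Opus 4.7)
The plan is to exploit the bridge provided by Theorem \ref{thm:sys_to_exsys}, namely
\[
\exState{\MyT}
=\CondExpectAllTV[\big]{\VECH{\State{\MyT}\State{\MyT}^{\MyTRANSPO}}}{\MybT}{\MyT},
\]
together with the elementary fact that for a positive semidefinite matrix $\NotationSymMat\succeq 0$ one has $\|\NotationSymMat\|\to 0$ if and only if $\TRACE{\NotationSymMat}\to 0$. Writing $\|\State{\MyT}\|^{2}=\TRACE{\State{\MyT}\State{\MyT}^{\MyTRANSPO}}$ and taking the conditional expectation, I get
\[
{\CondExpectAllTV[\big]{\|\State{\MyT}\|^{2}}{\MybT}{\MyT}}
=\TRACE[\big]{{\CondExpectAllTV[\big]{\State{\MyT}\State{\MyT}^{\MyTRANSPO}}{\MybT}{\MyT}}},
\]
so the trace of the conditional second-moment matrix and its operator (or any) norm vanish simultaneously. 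Via the linear isomorphism $\VECH{\cdot}$ on $\SetSymMat{\DimX}$, this norm is in turn equivalent to $\|\exState{\MyT}\|$.

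For the ``only if'' direction (robust MS stability $\Rightarrow$ robust stability of the expanded system), I first treat the case where $\exState{0}=\VECH{\State{0}\State{0}^{\MyTRANSPO}}$ for some $\State{0}\in\mathbb{R}^{\DimX}$ and any expanded uncertainty sequence $\exTVUnc{0},\exTVUnc{1},\dots\in\ImagePolyW$. By definition of $\ImagePolyW$, each $\exTVUnc{\MyT}$ equals $\MapPolyW{\TVUnc{\MyT}}$ for some $\TVUnc{\MyT}\in\DomTVUnc$, so Theorem \ref{thm:sys_to_exsys} applies and the equivalence of norms noted above yields $\|\exState{\MyT}\|\to 0$. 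To promote this to arbitrary $\exState{0}\in\mathbb{R}^{\DimexX}$, I use linearity of the expanded dynamics in $\exState{0}$ (for a fixed uncertainty sequence): writing $\VECH^{-1}(\exState{0})=\sum_{\IDEl=1}^{\DimX}\iEig{\IDEl}{\cdot}\,\iEigVec{\IDEl}{\cdot}\iEigVec{\IDEl}{\cdot}^{\MyTRANSPO}$ by the spectral decomposition, the corresponding $\exState{0}$ is a linear combination of rank-one ``physical'' initial conditions, each of which produces a trajectory tending to zero by the preceding step. Superposition then gives $\|\exState{\MyT}\|\to 0$ for every $\exState{0}$.

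The ``if'' direction is the easier half: given any $\State{0}\in\mathbb{R}^{\DimX}$ and any sequence $\TVUnc{0},\TVUnc{1},\dots\in\DomTVUnc$, set $\exState{0}=\VECH{\State{0}\State{0}^{\MyTRANSPO}}$ and $\exTVUnc{\MyT}=\MapPolyW{\TVUnc{\MyT}}\in\ImagePolyW$. Robust stability of \eqref{eq:def_exsys} yields $\exState{\MyT}\to 0$, which by Theorem \ref{thm:sys_to_exsys} and the PSD/trace equivalence forces ${\CondExpectAllTV[\big]{\|\State{\MyT}\|^{2}}{\MybT}{\MyT}}\to 0$, i.e., \eqref{eq:def_RMSstable}.

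The main obstacle is the reduction from a general $\exState{0}\in\mathbb{R}^{\DimexX}$ to the rank-one, positive-semidefinite initial conditions $\VECH{\State{0}\State{0}^{\MyTRANSPO}}$ that Theorem \ref{thm:sys_to_exsys} directly handles: the set of such initial states is a proper cone and not a subspace, so the argument must invoke linearity of the closed-loop expanded dynamics together with the spectral decomposition to cover arbitrary (possibly indefinite) symmetric representatives. Once this step is in place, the rest is bookkeeping with the trace/norm equivalence on the PSD cone.
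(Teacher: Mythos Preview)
Your proof is correct and follows the same overall strategy as the paper: use Theorem~\ref{thm:sys_to_exsys} to identify $\exState{\MyT}$ with $\CondExpectAllTV[\big]{\VECH{\State{\MyT}\State{\MyT}^{\MyTRANSPO}}}{\MybT}{\MyT}$ for ``physical'' initial data, then pass to arbitrary $\exState{0}\in\mathbb{R}^{\DimexX}$ by linearity of the expanded dynamics.

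The tactical choices differ slightly. For the rank-one step, the paper argues componentwise via the Cauchy--Schwarz inequality $|\CondExpectAllTV{\El{\State{\MyT}}{\IDEl}\El{\State{\MyT}}{\IDbEl}}{\MybT}{\MyT}|^{2}\le \CondExpectAllTV{\El{\State{\MyT}}{\IDEl}^{2}}{\MybT}{\MyT}\,\CondExpectAllTV{\El{\State{\MyT}}{\IDbEl}^{2}}{\MybT}{\MyT}$, whereas you use the equivalent but cleaner observation that on the PSD cone the trace and any matrix norm vanish together. For the extension to general $\exState{0}$, the paper builds an explicit fixed basis of $\mathbb{R}^{\DimexX}$ from the vectors $\VECH{\PFStateIni{\IDEl}{\IDbEl}\PFStateIni{\IDEl}{\IDbEl}{}^{\MyTRANSPO}}$ (with $\PFStateIni{\IDEl}{\IDbEl}$ having ones in positions $\IDEl,\IDbEl$), while you invoke the spectral decomposition of $\VECH^{-1}(\exState{0})$. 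Your spectral route is more conceptual; the paper's fixed-basis route has the advantage that the same finite family of rank-one initial data works uniformly for all $\exState{0}$, which is exactly what the paper reuses in the exponential version (Theorem~\ref{thm:Equivarence_ERMS}) to obtain a constant $\exEMScoef$ independent of $\exState{0}$. For the present asymptotic statement, either decomposition is sufficient.
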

\begin{proof}
	The proof is described in Appendix \ref{pf:Equivarence_RMS}.		
\end{proof}

\begin{theorem}[{\MyHighlight{Equivalence of the exponential stability}}]\label{thm:Equivarence_ERMS}
	The SMP system \eqref{eq:def_CLsys} is exponentially robustly MS stable with $\EMSrate \in (0,1)$
	if and only if the expanded system \eqref{eq:def_exsys} is exponentially robustly stable with $\exEMSrate \in (0,1)$ satisfying
	\begin{align}
	\exEMSrate
	& = 
	\EMSrate^{2}
	. \label{eq:exEMSrate_to_EMSrate}
	\end{align}
\end{theorem}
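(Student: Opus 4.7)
The plan is to exploit the identity from Theorem~\ref{thm:sys_to_exsys} and then track how exponential rates transfer back and forth between $\|\exState{\MyT}\|$ and $\CondExpectAllTV[\big]{\|\State{\MyT}\|^{2}}{\MybT}{\MyT}$. First I would set $M_{\MyT}:={\CondExpectAllTV[\big]{{\State{\MyT}}{\State{\MyT}^{\MyTRANSPO}}}{\MybT}{\MyT}}$, so that, with the choice $\exState{0}=\VECH{\State{0}\State{0}^{\MyTRANSPO}}$ and $\exTVUnc{\MyT}=\MapPolyW{\TVUnc{\MyT}}$, Theorem~\ref{thm:sys_to_exsys} yields $\exState{\MyT}=\VECH{M_{\MyT}}$. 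Since $M_{\MyT}\succeq 0$ and $\CondExpectAllTV[\big]{\|\State{\MyT}\|^{2}}{\MybT}{\MyT}=\TRACE{M_{\MyT}}$, I would record the norm equivalences (valid for every PSD $M\in\SetSymMat{\DimX}$)
\begin{equation*}
\tfrac{1}{\sqrt{2}}\,\|M\|_{F}\;\le\;\|\VECH{M}\|\;\le\;\|M\|_{F}\;\le\;\TRACE{M}\;\le\;\sqrt{\DimX}\,\|M\|_{F},
\end{equation*}
which in particular give $\|\VECH{M}\|\le\TRACE{M}\le\sqrt{2\DimX}\,\|\VECH{M}\|$. These are the only quantitative inequalities needed throughout.

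For the sufficiency direction ($\Leftarrow$), I would apply the exponential-robust-stability bound of the expanded system to $\exState{0}=\VECH{\State{0}\State{0}^{\MyTRANSPO}}$, noting that $\|\exState{0}\|\le \|\State{0}\|^{2}$. Combining with $\TRACE{M_{\MyT}}\le \sqrt{2\DimX}\,\|\exState{\MyT}\|$ gives $\CondExpectAllTV[\big]{\|\State{\MyT}\|^{2}}{\MybT}{\MyT}\le \sqrt{2\DimX}\,\exEMScoef\,\|\State{0}\|^{2}\,\exEMSrate^{\MyT}$, and taking square roots yields \eqref{eq:def_ERMSstable} with $\EMSrate=\sqrt{\exEMSrate}\in(0,1)$ and $\EMScoef=(\sqrt{2\DimX}\,\exEMScoef)^{1/2}$, exactly as prescribed by \eqref{eq:exEMSrate_to_EMSrate}.

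For the necessity direction ($\Rightarrow$), I would first square the MS bound to obtain $\TRACE{M_{\MyT}}\le \EMScoef^{2}\|\State{0}\|^{2}\EMSrate^{2\MyT}$, and use $\|\exState{\MyT}\|=\|\VECH{M_{\MyT}}\|\le\TRACE{M_{\MyT}}$ together with $\|\State{0}\|^{2}\le \sqrt{2}\,\|\exState{0}\|$ to conclude the expanded-state bound with rate $\EMSrate^{2}$, \emph{but only for initial states of rank-one form} $\exState{0}=\VECH{v v^{\MyTRANSPO}}$. The main obstacle is precisely this restriction: Definition~\ref{def:ERstable} requires the bound to hold for every $\exState{0}\in\mathbb{R}^{\DimexX}$, whereas Theorem~\ref{thm:sys_to_exsys} only provides initial conditions of the rank-one PSD form. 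I would overcome this by invoking the linearity of the expanded dynamics together with the spectral decomposition of symmetric matrices: any $\exState{0}\in\mathbb{R}^{\DimexX}$ equals $\VECH{Y}$ for some $Y\in\SetSymMat{\DimX}$, and diagonalising $Y=\sum_{\IDEl=1}^{\DimX}\lambda_{\IDEl}v_{\IDEl}v_{\IDEl}^{\MyTRANSPO}$ with orthonormal $v_{\IDEl}$ expresses $\exState{0}$ as a real linear combination of rank-one vech vectors. Linearity of \eqref{eq:def_exsys} then decomposes $\exState{\MyT}$ accordingly; applying the rank-one bound to each term and summing gives
\begin{equation*}
\|\exState{\MyT}\|\;\le\;\EMScoef^{2}(\EMSrate^{2})^{\MyT}\sum_{\IDEl=1}^{\DimX}|\lambda_{\IDEl}|\;\le\;\EMScoef^{2}(\EMSrate^{2})^{\MyT}\sqrt{\DimX}\,\|Y\|_{F}\;\le\;\sqrt{2\DimX}\,\EMScoef^{2}\,\|\exState{0}\|\,(\EMSrate^{2})^{\MyT},
\end{equation*}
where Cauchy--Schwarz bounds $\sum|\lambda_{\IDEl}|$ by $\sqrt{\DimX}\,\|Y\|_{F}$ and the vech/vec comparison gives $\|Y\|_{F}\le\sqrt{2}\,\|\exState{0}\|$. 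This establishes \eqref{eq:def_ERstable} with $\exEMSrate=\EMSrate^{2}\in(0,1)$ and completes the equivalence.
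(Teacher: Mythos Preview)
Your proof is correct and follows the same overall architecture as the paper---use Theorem~\ref{thm:sys_to_exsys} to link $\exState{\MyT}$ with the second-moment matrix $M_{\MyT}$, transfer the exponential bound in each direction via norm comparisons, and then extend the necessity argument from rank-one initial data to arbitrary $\exState{0}$ by linearity---but the execution differs in two places.

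First, where the paper derives the two-sided comparison between $\|\exState{\MyT}\|$ and $\CondExpectAllTV[\big]{\|\State{\MyT}\|^{2}}{\MybT}{\MyT}$ by applying Cauchy--Schwarz separately to the diagonal sum and to each cross moment $\CondExpectAllTV{\El{\State{\MyT}}{\IDEl}\El{\State{\MyT}}{\IDbEl}}{\MybT}{\MyT}$, you package the same information as the chain $\tfrac{1}{\sqrt{2}}\|M\|_{F}\le\|\VECH{M}\|\le\|M\|_{F}\le\TRACE{M}\le\sqrt{\DimX}\,\|M\|_{F}$ for PSD $M$. This is equivalent in content but cleaner to state and reuse.

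Second, and more substantively, for the necessity direction the paper extends from rank-one initial states to all of $\mathbb{R}^{\DimexX}$ by writing $\exState{0}$ in the explicit standard basis built from the vectors $\VECH{\PFStateIni{\IDEl}{\IDbEl}\PFStateIni{\IDEl}{\IDbEl}{}^{\MyTRANSPO}}$ (the same construction used in the proof of Theorem~\ref{thm:Equivarence_RMS}), which involves $\DimexX$ terms and leads to the constant $\exEMScoef\ge(\sqrt{3}/2)(3\DimX-1)\DimX^{3/2}\EMScoef^{2}$. Your route---write $\exState{0}=\VECH{Y}$ and spectrally decompose $Y=\sum_{\IDEl}\lambda_{\IDEl}v_{\IDEl}v_{\IDEl}^{\MyTRANSPO}$---uses only $\DimX$ rank-one terms with orthonormal $v_{\IDEl}$, and the resulting bound $\|\exState{\MyT}\|\le\sqrt{2\DimX}\,\EMScoef^{2}\|\exState{0}\|(\EMSrate^{2})^{\MyT}$ has a considerably smaller prefactor. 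Both arguments are valid; yours is shorter and yields a sharper constant, while the paper's has the minor advantage of reusing verbatim the basis decomposition already set up for Theorem~\ref{thm:Equivarence_RMS}.
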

\begin{proof}	
	The proof is described in Appendix \ref{pf:Equivarence_ERMS}.
\end{proof}

\begin{remark}[{\MyHighlight{Contributions of Theorems \ref{thm:Equivarence_RMS} and \ref{thm:Equivarence_ERMS}}}]
The stability analysis of the SMP system \eqref{eq:def_CLsys} reduces to that of the expanded system \eqref{eq:def_exsys}.
Recall that the expanded system is included in a deterministic polytope with the uncertain parameter ${\exTVUnc{\MyT}} \in \ImagePolyW \subseteq \DomPolytope$ in \eqref{eq:def_DompolyW}.
Thus, we can employ various stability analyses for polytopic linear systems, e.g., \cite{Oliveira99}.

\end{remark}

In the following, we derive sufficient conditions for the stability of the expanded system \eqref{eq:def_exsys} based on existing results in \cite{Oliveira99}.
Let us define the following matrix-valued function that is cubic in 
$\UNIexpolyVMat \in  {\SetSymMat{\DimexX}}$,
$\exAddMat \in \mathbb{R}^{\DimexX \times \DimexX}$, and
$\FBgain \in \mathbb{R}^{\DimU \times \DimX}$
 with fixed $\exEMSrate$:
\begin{align}
&{\CMIterms{\IDpoly}{\UNIexpolyVMat}{\exAddMat}{\FBgain}{\exEMSrate}}
\VisibleColTwo{\nonumber\\&}
:=
\begin{bmatrix}
\exEMSrate^{2} 
\UNIexpolyVMat
\;\;&\;\;     
\Elimi{ {\expolyCLMat{\IDpoly}{\FBgain}}  }^{\MyTRANSPO}   \exAddMat
\\
\exAddMat^{\MyTRANSPO}   \Elimi{ {\expolyCLMat{\IDpoly}{\FBgain}}  }  
\;\;&\;\;   
\exAddMat^{\MyTRANSPO} + \exAddMat - \UNIexpolyVMat
\end{bmatrix}
.\label{eq:def_LMIterms}
\end{align}	
Stability conditions of the expanded system are derived below.

\begin{theorem}[{\MyHighlight{Exp. robust stability of TI expanded systems}}]\label{thm:MSstab_TISMP}
	Suppose that the expanded system \eqref{eq:def_exsys} is TI.
	The system \eqref{eq:def_exsys} is exponentially robustly stable with a given $\exEMSrate \in (0, 1)$ 
	if the following condition {\itemMultiPERSCMIs} holds:	
	\begin{enumerate}
		\setlength{\itemindent}{10pt}
		
		\item[\itemMultiPERSCMIs]
		There exist ${\expolyVMat{\IDpoly}} \succ 0 \in {\SetSymMat{\DimexX}}$ for $\IDpoly \in \{1,\dots, \NUMpoly \}$,  $\exAddMat$, and $\FBgain$ such that
		\begin{align}
		\forall \IDpoly \in \{1,\dots, \NUMpoly \}
		,\quad
		{\CMIterms{\IDpoly}{\expolyVMat{\IDpoly}}{\exAddMat}{\FBgain}{\exEMSrate}}
		\succeq 0
		.\label{eq:expanded_ERS}
		\end{align}	
		
	\end{enumerate}		
	
\end{theorem}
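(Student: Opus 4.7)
The plan is to convert \itemMultiPERSCMIs into a standard parameter-dependent Lyapunov inequality for the expanded dynamics, following the Oliveira--Geromel--Bernussou slack-variable approach for discrete-time polytopic systems, and then to invoke a quadratic Lyapunov argument.

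First, I freeze the uncertainty. Since the expanded system is TI (Definition~\ref{def:TITVexpanded}), $\exTVUnc{\MyT}$ coincides with a constant $\exTIUnc \in \ImagePolyW \subseteq \DomPolytope$. By linearity of $\NonArgElimi$ together with \eqref{eq:def_exCLMat}, the closed-loop matrix satisfies $\Elimi{\exCLMat{\FBgain}} = \sum_{\IDpoly=1}^{\NUMpoly} {\El{\exTIUnc}{\IDpoly}} \Elimi{\expolyCLMat{\IDpoly}{\FBgain}}$. Setting $\UNIexpolyVMat(\exTIUnc) := \sum_{\IDpoly=1}^{\NUMpoly} {\El{\exTIUnc}{\IDpoly}} \expolyVMat{\IDpoly}$, which is PD because each vertex matrix is PD and $\exTIUnc$ lies on the unit simplex, I multiply the $\IDpoly$-th inequality in \eqref{eq:expanded_ERS} by ${\El{\exTIUnc}{\IDpoly}} \geq 0$ and sum, yielding the single block inequality
\[
\begin{bmatrix}
\exEMSrate^{2} \UNIexpolyVMat(\exTIUnc) & \Elimi{\exCLMat{\FBgain}}^{\MyTRANSPO} \exAddMat \\
\exAddMat^{\MyTRANSPO} \Elimi{\exCLMat{\FBgain}} & \exAddMat^{\MyTRANSPO} + \exAddMat - \UNIexpolyVMat(\exTIUnc)
\end{bmatrix} \succeq 0.
\]

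Next, I apply the slack-variable trick to isolate a Lyapunov inequality. Since the full $2\DimexX \times 2\DimexX$ matrix is PSD and its (1,1) block is PD, the (2,2) block is itself PSD, so $\exAddMat^{\MyTRANSPO} + \exAddMat \succeq \UNIexpolyVMat(\exTIUnc) \succ 0$; this forces $\exAddMat$ to be invertible, which is the only part of the argument that is not purely mechanical and constitutes the main obstacle. With invertibility in hand, the elementary identity $(\exAddMat - \UNIexpolyVMat(\exTIUnc))^{\MyTRANSPO} \UNIexpolyVMat(\exTIUnc)^{-1} (\exAddMat - \UNIexpolyVMat(\exTIUnc)) \succeq 0$ expands to $\exAddMat^{\MyTRANSPO} \UNIexpolyVMat(\exTIUnc)^{-1} \exAddMat \succeq \exAddMat^{\MyTRANSPO} + \exAddMat - \UNIexpolyVMat(\exTIUnc)$, so replacing the (2,2) block by this larger expression preserves positive semidefiniteness. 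Taking the Schur complement with respect to the new (strictly positive) (2,2) block eliminates $\exAddMat$ entirely and leaves
\[
\Elimi{\exCLMat{\FBgain}}^{\MyTRANSPO} \UNIexpolyVMat(\exTIUnc) \Elimi{\exCLMat{\FBgain}} \preceq \exEMSrate^{2} \UNIexpolyVMat(\exTIUnc).
\]

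Finally, I derive the exponential decay. Along any trajectory of \eqref{eq:def_exsys}, the quadratic $V_{\MyT} := \exState{\MyT}^{\MyTRANSPO} \UNIexpolyVMat(\exTIUnc) \exState{\MyT}$ satisfies $V_{\MyT+1} \leq \exEMSrate^{2} V_{\MyT}$, hence $V_{\MyT} \leq \exEMSrate^{2\MyT} V_{0}$. The uniform eigenvalue bounds $\lambda_{\ast} := \min_{\IDpoly} \iEig{\DimexX}{\expolyVMat{\IDpoly}} > 0$ and $\overline{\lambda} := \max_{\IDpoly} \iEig{1}{\expolyVMat{\IDpoly}} < \infty$, which are independent of $\exTIUnc \in \ImagePolyW$, then yield $\|\exState{\MyT}\| \leq \exEMScoef \|\exState{0}\| \exEMSrate^{\MyT}$ with $\exEMScoef := \sqrt{\overline{\lambda}/\lambda_{\ast}}$. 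This is exactly the definition of exponential robust stability.
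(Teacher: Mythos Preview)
Your proof is correct and follows essentially the same route as the paper's own argument: both take the convex combination of the vertex inequalities, invoke the Oliveira--Bernussou--Geromel slack-variable elimination to obtain the parameter-dependent Lyapunov inequality $\Elimi{\exTICLMat{\FBgain}}^{\MyTRANSPO} \UNIexpolyVMat(\exTIUnc) \Elimi{\exTICLMat{\FBgain}} \preceq \exEMSrate^{2} \UNIexpolyVMat(\exTIUnc)$, and then iterate to get the exponential decay with uniform constants over the simplex. The only difference is that the paper cites \cite{Oliveira99} for the elimination step, whereas you carry it out explicitly (invertibility of $\exAddMat$ from the $(2,2)$ block, the completion-of-squares bound $\exAddMat^{\MyTRANSPO} \UNIexpolyVMat^{-1} \exAddMat \succeq \exAddMat^{\MyTRANSPO} + \exAddMat - \UNIexpolyVMat$, and the Schur complement); this makes your argument more self-contained but not substantively different.
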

\begin{proof}
The proof is described in Appendix \ref{pf:MSstab_TISMP}.		
\end{proof}

\begin{theorem}[{\MyHighlight{Robust stability of TI expanded systems}}]\label{thm:RS_TIexpanded}
	Suppose that the expanded system \eqref{eq:def_exsys} is TI.
	The system \eqref{eq:def_exsys} is robustly stable 
	if the following {\itemMultiPRSCMIs} holds:		
	\begin{enumerate}
		\setlength{\itemindent}{10pt}
					
		\item[\itemMultiPRSCMIs]
		There exist ${\expolyVMat{\IDpoly}} \succ 0 \in {\SetSymMat{\DimexX}}$ for $\IDpoly \in \{1,\dots, \NUMpoly \}$,  $\exAddMat$, and $\FBgain$ such that
		\begin{align}
		\forall \IDpoly \in \{1,\dots, \NUMpoly \}
		,\quad
		{\CMIterms{\IDpoly}{\expolyVMat{\IDpoly}}{\exAddMat}{\FBgain}{1}}
		\succ 0
		.\label{eq:expanded_RS}
		\end{align}

	\end{enumerate}	
\end{theorem}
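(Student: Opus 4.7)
My plan is to derive Theorem \ref{thm:RS_TIexpanded} as a direct corollary of Theorem \ref{thm:MSstab_TISMP} by exploiting the fact that the matrix inequality in \eqref{eq:expanded_RS} is \emph{strict}. The idea is that if the non-strict inequality \eqref{eq:expanded_ERS} held only at the boundary value $\exEMSrate=1$, no conclusion about asymptotic convergence could be drawn; but strictness at $\exEMSrate=1$ gives a continuity margin that can be traded for some $\exEMSrate<1$, after which exponential robust stability (and hence robust stability) follows.

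Concretely, I would proceed as follows. First, fix matrices ${\expolyVMat{\IDpoly}}\succ 0$, $\exAddMat$, and $\FBgain$ satisfying {\itemMultiPRSCMIs}, and observe from the definition \eqref{eq:def_LMIterms} that the parameter $\exEMSrate$ enters only through the upper-left block $\exEMSrate^{2} \expolyVMat{\IDpoly}$. Hence for any $\exEMSrate\in(0,1]$,
\begin{align}
{\CMIterms{\IDpoly}{\expolyVMat{\IDpoly}}{\exAddMat}{\FBgain}{1}}
-
{\CMIterms{\IDpoly}{\expolyVMat{\IDpoly}}{\exAddMat}{\FBgain}{\exEMSrate}}
=
\begin{bmatrix}
(1-\exEMSrate^{2})\expolyVMat{\IDpoly} & 0 \\ 0 & 0
\end{bmatrix}.
\nonumber
\end{align}
Second, since \eqref{eq:expanded_RS} is strict and $\NUMpoly$ is finite, there exists $\eta>0$ with ${\CMIterms{\IDpoly}{\expolyVMat{\IDpoly}}{\exAddMat}{\FBgain}{1}}\succeq \eta I$ for every $\IDpoly$. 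Choosing $\exEMSrate\in(0,1)$ sufficiently close to $1$ so that $(1-\exEMSrate^{2})\max_{\IDpoly}\iEig{1}{\expolyVMat{\IDpoly}}\leq \eta$ makes the difference above dominated by $\eta I$, and therefore
\begin{align}
{\CMIterms{\IDpoly}{\expolyVMat{\IDpoly}}{\exAddMat}{\FBgain}{\exEMSrate}}\succeq 0
\quad \text{for all } \IDpoly\in\{1,\dots,\NUMpoly\}.
\nonumber
\end{align}
This is exactly {\itemMultiPERSCMIs} with the chosen $\exEMSrate\in(0,1)$. Third, Theorem \ref{thm:MSstab_TISMP} then yields exponential robust stability of the expanded system \eqref{eq:def_exsys} with rate $\exEMSrate<1$. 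Finally, the bound $\|\exState{\MyT}\|\leq\exEMScoef\|\exState{0}\|\exEMSrate^{\MyT}$ with $\exEMSrate\in(0,1)$ immediately implies $\|\exState{\MyT}\|\to 0$ as $\MyT\to\infty$ for every initial state and every admissible uncertainty sequence, which is precisely the robust stability \eqref{eq:def_Rstable} required.

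The main obstacle, such as it is, is purely in setting up the continuity step carefully: one must make sure that the parameter $\exEMSrate$ can indeed be perturbed \emph{uniformly} across all $\NUMpoly$ vertex inequalities and that neither $\exAddMat$ nor the Lyapunov matrices ${\expolyVMat{\IDpoly}}$ need to be altered. Because $\exEMSrate$ appears only in one diagonal block and the difference above is positive semidefinite, this uniformity is automatic from the finiteness of $\NUMpoly$ and the strict positive definiteness guaranteed by {\itemMultiPRSCMIs}. No new Lyapunov argument is needed; the entire theorem is a corollary of the exponential-case result already established.
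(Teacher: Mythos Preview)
Your argument is correct. Both your proof and the paper's exploit the same starting observation: strictness of \eqref{eq:expanded_RS} together with the finiteness of $\NUMpoly$ yields a uniform positive margin. The paper extracts a margin $\CMIstrictCoef>0$ only in the upper-left block, i.e.\ ${\CMIterms{\IDpoly}{\expolyVMat{\IDpoly}}{\exAddMat}{\FBgain}{1}} - \begin{bmatrix}\CMIstrictCoef\Identity{\DimexX}&0\\0&0\end{bmatrix}\succeq 0$, then reruns the Lyapunov derivation of Theorem~\ref{thm:MSstab_TISMP} to obtain $\LYAPexpolyVMat(\exTIUnc)\succeq \Elimi{\exTICLMat{\FBgain}}^{\MyTRANSPO}\LYAPexpolyVMat(\exTIUnc)\Elimi{\exTICLMat{\FBgain}}+\CMIstrictCoef\Identity{\DimexX}$ and concludes via a Schur-stability lemma from \cite{Oliveira99}. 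You instead take a full margin $\eta\Identity{2\DimexX}$, trade it for a rate $\exEMSrate<1$ in the same upper-left block, and invoke Theorem~\ref{thm:MSstab_TISMP} as a black box. Your route is more modular and avoids repeating the Lyapunov argument or citing the eigenvalue lemma; the paper's route is slightly more self-contained and makes the Lyapunov inequality with strict margin explicit, which is reused later in the TV case (Theorem~\ref{thm:ERS_RS_TVexpanded}).
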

\begin{proof}
	The proof is described in Appendix \ref{pf:RS_TIexpanded}.		
\end{proof}

\begin{theorem}[{\MyHighlight{Stability of TV expanded systems}}]\label{thm:ERS_RS_TVexpanded}
	Even if the expanded system \eqref{eq:def_exsys} is not TI but TV,	
	the statements of Theorems \ref{thm:MSstab_TISMP} and \ref{thm:RS_TIexpanded} hold 
	if ${\expolyVMat{\IDpoly}}$ for $\IDpoly \in \{1,\dots, \NUMpoly \}$ are replaced with an identical matrix $\UNIexpolyVMat \succ 0 \in {\SetSymMat{\DimexX}}$ in  {\itemMultiPERSCMIs} and {\itemMultiPRSCMIs}.	
\end{theorem}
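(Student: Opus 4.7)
The plan is to mimic the proofs of Theorems \ref{thm:MSstab_TISMP} and \ref{thm:RS_TIexpanded}, but with a single quadratic Lyapunov function $V(\exState{}) := \exState{}^{\MyTRANSPO} \UNIexpolyVMat \exState{}$ that serves every admissible parameter trajectory. The reason one now insists on a single $\UNIexpolyVMat$, rather than the vertex-indexed family $\{\expolyVMat{\IDpoly}\}$, is that $\exTVUnc{\MyT}$ varies with time and is not known in advance; a parameter-dependent Lyapunov matrix $\sum_{\IDpoly} \El{\exTVUnc{\MyT}}{\IDpoly} \expolyVMat{\IDpoly}$ would change unpredictably between steps and cannot serve as a legitimate Lyapunov function for the TV dynamics.

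First, I would reduce the vertex-wise matrix inequalities to an $\exTVUnc{\MyT}$-wise one. Multiplying the modified $\IDpoly$-th inequality (with $\UNIexpolyVMat$ replacing $\expolyVMat{\IDpoly}$) by $\El{\exTVUnc{\MyT}}{\IDpoly} \geq 0$ and summing over $\IDpoly \in \{1,\dots,\NUMpoly\}$ preserves the $(1,1)$ and $(2,2)$ blocks $\exEMSrate^{2} \UNIexpolyVMat$ and $\exAddMat^{\MyTRANSPO} + \exAddMat - \UNIexpolyVMat$ (neither depends on $\IDpoly$), and by the linearity of $\NonArgElimi$ together with \eqref{eq:def_exCLMat} aggregates the off-diagonal blocks into $\Elimi{\exCLMat{\FBgain}}^{\MyTRANSPO} \exAddMat$ and its transpose. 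The outcome is
\begin{align*}
\begin{bmatrix}
\exEMSrate^{2} \UNIexpolyVMat & \Elimi{\exCLMat{\FBgain}}^{\MyTRANSPO} \exAddMat \\
\exAddMat^{\MyTRANSPO} \Elimi{\exCLMat{\FBgain}} & \exAddMat^{\MyTRANSPO} + \exAddMat - \UNIexpolyVMat
\end{bmatrix} \succeq 0
\end{align*}
at each $\exTVUnc{\MyT} \in \ImagePolyW$ (strict for the (C1.2)-analog with $\exEMSrate = 1$). Since $\UNIexpolyVMat \succ 0$, the $(2,2)$ block forces $\exAddMat + \exAddMat^{\MyTRANSPO} \succ 0$, so $\exAddMat$ is invertible. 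The identity $(\exAddMat - \UNIexpolyVMat)^{\MyTRANSPO} \UNIexpolyVMat^{-1} (\exAddMat - \UNIexpolyVMat) \succeq 0$ upper-bounds the $(2,2)$ block by $\exAddMat^{\MyTRANSPO} \UNIexpolyVMat^{-1} \exAddMat$, and taking the Schur complement with respect to that block then delivers $\Elimi{\exCLMat{\FBgain}}^{\MyTRANSPO} \UNIexpolyVMat \Elimi{\exCLMat{\FBgain}} \preceq \exEMSrate^{2} \UNIexpolyVMat$ (strict for the (C1.2)-analog).

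The stability conclusions now fall out. For the (C1.1)-analog, this gives $V(\exState{\MyT+1}) \leq \exEMSrate^{2} V(\exState{\MyT})$ along every admissible sequence, so iterating yields $V(\exState{\MyT}) \leq \exEMSrate^{2 \MyT} V(\exState{0})$, and the spectral bounds $\iEig{\DimexX}{\UNIexpolyVMat} \|\exState{}\|^{2} \leq V(\exState{}) \leq \iEig{1}{\UNIexpolyVMat} \|\exState{}\|^{2}$ produce \eqref{eq:def_ERstable} with $\exEMScoef = \sqrt{\iEig{1}{\UNIexpolyVMat}/\iEig{\DimexX}{\UNIexpolyVMat}}$. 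For the (C1.2)-analog, continuity of $\exTVUnc{} \mapsto \Elimi{\exCLMat{\FBgain}}$ and compactness of $\DomPolytope$ upgrade the strict pointwise contraction to a uniform one: there exists $\rho \in (0, 1)$ with $V(\exState{\MyT+1}) \leq \rho V(\exState{\MyT})$ for every admissible sequence, whence $\|\exState{\MyT}\| \to 0$. I expect the only real nuance to be the invertibility of the slack variable $\exAddMat$ in the non-strict (C1.1) setting, which is precisely where the strict positivity of $\UNIexpolyVMat$ is needed.
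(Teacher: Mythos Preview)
Your argument is correct. For the exponential robust stability part (the analog of {\itemMultiPERSCMIs}), your approach coincides with the paper's: both obtain the single-step contraction $\Elimi{\exCLMat{\FBgain}}^{\MyTRANSPO}\UNIexpolyVMat\,\Elimi{\exCLMat{\FBgain}}\preceq\exEMSrate^{2}\UNIexpolyVMat$ and iterate. For the robust stability part (the analog of {\itemMultiPRSCMIs}), you take a different route. The paper does not appeal to compactness; instead it keeps the strict margin $\gamma\Identity{\DimexX}$ from the vertex inequalities, writes the one-step relation $\UNIexpolyVMat\succeq\Elimi{\exCLMat{\FBgain}}^{\MyTRANSPO}\UNIexpolyVMat\,\Elimi{\exCLMat{\FBgain}}+\gamma\Identity{\DimexX}$, and telescopes it along the trajectory to bound $\sum_{\MybT}\BRIEFexCLMat{0:\MybT}^{\MyTRANSPO}\BRIEFexCLMat{0:\MybT}$ by a constant matrix, forcing $\BRIEFexCLMat{0:\MyT}\to 0$. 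Your compactness argument on $\DomPolytope$ is cleaner and in fact yields exponential decay even under {\itemMultiPRSCMIs}; the paper's summability argument avoids any topological reasoning and stays purely algebraic, which makes it transplant more directly to settings where the parameter set is not given as (or contained in) a compact polytope.
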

\begin{proof}
	The proof is described in Appendix \ref{pf:ERS_RS_TVexpanded}.			
\end{proof}

\begin{remark}[{\MyHighlight{Contributions of the above statements}}]
Theorems \ref{thm:MSstab_TISMP} and \ref{thm:RS_TIexpanded} provide sufficient stability conditions of the TI expanded systems for a given $\exEMSrate$.
Theorem \ref{thm:ERS_RS_TVexpanded} applies the conditions to TV systems, which focuses the quadratic stability.
Moreover, note that the conditions reduce to LMIs of ${\expolyVMat{\IDpoly}}$ and $\exAddMat$ if $\FBgain$ is given or if $\InMat{\MyT}\FBgain=0$ holds (autonomous cases). 
\end{remark}

Consequently, solutions to Problem 1 are summarized using Theorems \ref{thm:Equivarence_RMS}--\ref{thm:ERS_RS_TVexpanded}.

\begin{corollary}[{\MyHighlight{Solutions to Problem 1}}]\label{thm:solution_to_Problem1}
The SMP system {\eqref{eq:def_CLsys}} is exponentially robustly MS stable (resp. robustly MS stable) if
either of the following (i) or (ii) is satisfied:	
\begin{enumerate}
	\item 
	{\itemMultiPERSCMIs} (resp. {\itemMultiPRSCMIs}) holds and the system is TI.
	\item
	{\itemMultiPERSCMIs} (resp. {\itemMultiPRSCMIs}) holds when replacing ${\expolyVMat{\IDpoly}}$ for $\IDpoly \in \{1,\dots, \NUMpoly \}$ with an identical $\UNIexpolyVMat \succ 0$.	
\end{enumerate}
\end{corollary}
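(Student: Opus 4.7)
The plan is to establish the corollary by chaining the preceding results along the vertical diagram in Figure \ref{fig:overview}: the matrix-inequality condition implies stability of the expanded system, which in turn is equivalent to the statistical stability of the original SMP system. Since the corollary has four branches (exponential vs.\ non-exponential stability, crossed with case (i) vs.\ case (ii)), I would handle them by a small $2\times 2$ case analysis that reuses the same scaffold.

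First I would address case (i). Assume the SMP system \eqref{eq:def_CLsys} is TI, so by Definition \ref{def:TITVexpanded} the expanded system \eqref{eq:def_exsys} is TI as well. If \itemMultiPERSCMIs{} holds, then the hypothesis of Theorem \ref{thm:MSstab_TISMP} is satisfied, hence the TI expanded system is exponentially robustly stable with the given $\exEMSrate \in (0,1)$. Applying Theorem \ref{thm:Equivarence_ERMS} (in the ``if'' direction, with $\EMSrate=\sqrt{\exEMSrate}\in(0,1)$ via \eqref{eq:exEMSrate_to_EMSrate}) yields exponential robust MS stability of the SMP system. The non-exponential counterpart is analogous: \itemMultiPRSCMIs{} together with Theorem \ref{thm:RS_TIexpanded} gives robust stability of the TI expanded system, and Theorem \ref{thm:Equivarence_RMS} transfers it to robust MS stability of the SMP system.

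For case (ii) the SMP system is not necessarily TI, so the expanded system may be TV. Theorem \ref{thm:ERS_RS_TVexpanded} explicitly covers this setting: if \itemMultiPERSCMIs{} (resp.\ \itemMultiPRSCMIs{}) holds with the Lyapunov matrices replaced by a single common $\UNIexpolyVMat \succ 0$, then the TV expanded system is exponentially robustly stable (resp.\ robustly stable). The same invocation of Theorem \ref{thm:Equivarence_ERMS} (resp.\ \ref{thm:Equivarence_RMS}) as in case (i) then delivers the corresponding MS stability of the SMP system. This covers both TI and TV SMP systems simultaneously, because Theorem \ref{thm:ERS_RS_TVexpanded} makes no use of time-invariance.

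I do not anticipate any technical obstacle: each required implication has already been proved in Theorems \ref{thm:Equivarence_RMS}, \ref{thm:Equivarence_ERMS}, \ref{thm:MSstab_TISMP}, \ref{thm:RS_TIexpanded}, and \ref{thm:ERS_RS_TVexpanded}, so the proof amounts to citing the appropriate chain for each of the four branches. The only minor care needed is to make sure the decay rate conversion \eqref{eq:exEMSrate_to_EMSrate} is invoked correctly when passing from the expanded-system rate $\exEMSrate$ to the SMP-system rate $\EMSrate$, so that the existence of $\EMSrate\in(0,1)$ required by Definition \ref{def:ERMSstable} is preserved.
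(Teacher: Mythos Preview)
Your proposal is correct and matches the paper's intent: the corollary is stated immediately after Theorems \ref{thm:Equivarence_RMS}--\ref{thm:ERS_RS_TVexpanded} as their direct combination, and your $2\times 2$ case analysis is exactly the chaining of those results that the paper has in mind. The only point worth noting is that in case (ii) for a TI system you need not appeal to Theorem \ref{thm:ERS_RS_TVexpanded} at all, since an identical $\UNIexpolyVMat$ is simply a special instance of \itemMultiPERSCMIs{}/\itemMultiPRSCMIs{} already covered by Theorems \ref{thm:MSstab_TISMP} and \ref{thm:RS_TIexpanded}; but this is a cosmetic observation and does not affect correctness.
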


\subsection{Solutions to Problem 2: QMI-based controller design}\label{sec_control_QMIs}

In this subsection, we solve Problem 2, starting from the stability conditions {\itemMultiPERSCMIs} and {\itemMultiPRSCMIs}.
Unfortunately, these conditions are CMIs with respect to $(\UNIexpolyVMat,\exAddMat,\FBgain)$ if the feedback gain $\FBgain$ is not given but regarded as a decision variable.
To avoid solving the CMIs directly, we transform them into simpler QMIs in the following.

Let us define the following matrix-valued function that is quadratic in 
$\UNIAIMexpolyVMat \in {\SetSymMat{\DimexX}}$,
$\AddInvMat \in \mathbb{R}^{\DimX \times \DimX}$, and
$\AIMFBgain \in \mathbb{R}^{\DimU \times \DimX}$ with fixed $\exEMSrate$:
\begin{align}
&{\QMIterms{\IDpoly}{\UNIAIMexpolyVMat}{\AddInvMat}{\AIMFBgain}{\exEMSrate}}
\VisibleColTwo{\nonumber\\&}
:=
\begin{bmatrix}
\exEMSrate^{2} 
\UNIAIMexpolyVMat
&  
{\QMILowLeftblock{\IDpoly}{\AIMFBgain,\AddInvMat}^{\MyTRANSPO}  } 
\\
{\QMILowLeftblock{\IDpoly}{\AIMFBgain,\AddInvMat}}
&  
\Elimi{ \AddInvMat \otimes \AddInvMat } + \Elimi{ \AddInvMat \otimes \AddInvMat }^{\MyTRANSPO}   -  \UNIAIMexpolyVMat
\end{bmatrix}
,\label{eq:def_QMIterms}
\end{align}	
where
\begin{align}	
{\QMILowLeftblock{\IDpoly}{\AIMFBgain,\AddInvMat}}
&:=
\NonArgElimi
\Big( 
{\expolyAAMat{\IDpoly}}  (\AddInvMat \otimes \AddInvMat)
-{\expolyABMat{\IDpoly}} (\AddInvMat \otimes \AIMFBgain)
\VisibleColTwo{\nonumber \\&\quad}
-{\expolyBAMat{\IDpoly}} (\AIMFBgain \otimes \AddInvMat)
+{\expolyBBMat{\IDpoly}} (\AIMFBgain \otimes \AIMFBgain)
\Big)
.\label{eq:QMILowLeftblock}
\end{align}		
We derive the following results to simplify stability conditions.

\begin{theorem}[\MyHighlight{Controller design via QMIs}]\label{thm:stability_QMI}
For a given $\exEMSrate \in (0, 1)$, 
the following {\itemMultiPERSQMIs} and {\itemMultiPRSQMIs} imply 
{\itemMultiPERSCMIs} and  {\itemMultiPRSCMIs}, 
respectively, with the settings:
\begin{align}
{\expolyVMat{\IDpoly}}
&=  
\exAddMat^{\MyTRANSPO}
{\AIMexpolyVMat{\IDpoly}} 
\exAddMat
,
 \label{eq:AIMexpolyVMat_to_expolyVMat}
\\
\exAddMat &=\Elimi{ \AddInvMat \otimes \AddInvMat }^{-1} 
, \label{eq:AddInvMat_to_exAddMat}
\\
\FBgain 
&= 
\AIMFBgain \AddInvMat^{-1} 
. \label{eq:AIMFBgain_to_FBgain}
\end{align}	
\begin{enumerate}
	\setlength{\itemindent}{10pt}

	\item[{\itemMultiPERSQMIs}]
	There exist ${\AIMexpolyVMat{\IDpoly}} \succ 0 \in {\SetSymMat{\DimexX}}$ for $\IDpoly \in \{1,\dots, \NUMpoly \}$, $\AddInvMat$, and $\AIMFBgain$ such that 
	\begin{align}
	\forall \IDpoly \in \{1,\dots, \NUMpoly \}
	,\quad
	{\QMIterms{\IDpoly}{\AIMexpolyVMat{\IDpoly}}{\AddInvMat}{\AIMFBgain}{\exEMSrate}}
	\succeq 0
	.\label{eq:expanded_ERS_QMI}
	\end{align}

	\item[{\itemMultiPRSQMIs}]
	There exist ${\AIMexpolyVMat{\IDpoly}} \succ 0 \in {\SetSymMat{\DimexX}}$ for $\IDpoly \in \{1,\dots, \NUMpoly \}$, $\AddInvMat$, and $\AIMFBgain$ such that
	\begin{align}
	\forall \IDpoly \in \{1,\dots, \NUMpoly \}
	,\quad
	{\QMIterms{\IDpoly}{\AIMexpolyVMat{\IDpoly}}{\AddInvMat}{\AIMFBgain}{1}}
	\succ 0
	.\label{eq:expanded_RS_QMI}
	\end{align}

\end{enumerate}		
Furthermore, if ${\AIMexpolyVMat{\IDpoly}}$ are identical with respect to ${\IDpoly}$, ${\expolyVMat{\IDpoly}}$ given in \eqref{eq:AIMexpolyVMat_to_expolyVMat} are also identical.
\end{theorem}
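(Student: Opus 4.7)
The plan is to show that, under the substitutions \eqref{eq:AIMexpolyVMat_to_expolyVMat}--\eqref{eq:AIMFBgain_to_FBgain}, an explicit congruence transformation maps the QMI matrix $\QMIterms{\IDpoly}{\AIMexpolyVMat{\IDpoly}}{\AddInvMat}{\AIMFBgain}{\exEMSrate}$ onto $\CMIterms{\IDpoly}{\expolyVMat{\IDpoly}}{\exAddMat}{\FBgain}{\exEMSrate}$ as an exact matrix identity. Because congruence by an invertible matrix preserves both $\succeq 0$ and $\succ 0$, both implications (C2.1)$\Rightarrow$(C1.1) and (C2.2)$\Rightarrow$(C1.2) then follow in one stroke. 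The natural choice for the transformation is the block-diagonal matrix $T$ with two diagonal blocks each equal to $\exAddMat$. Before applying the transformation, I would verify that $\AddInvMat$ is nonsingular so that $\exAddMat = \Elimi{\AddInvMat \otimes \AddInvMat}^{-1}$ and $\FBgain = \AIMFBgain \AddInvMat^{-1}$ are well-defined: the lower-right block of the QMI is a principal submatrix and hence PSD, giving $\Elimi{\AddInvMat \otimes \AddInvMat} + \Elimi{\AddInvMat \otimes \AddInvMat}^{\MyTRANSPO} \succeq \AIMexpolyVMat{\IDpoly} \succ 0$, which forces $\Elimi{\AddInvMat \otimes \AddInvMat}$ to be nonsingular; since this operator is the $\VECH$-basis representation of the map $X \mapsto \AddInvMat X \AddInvMat^{\MyTRANSPO}$ on $\SetSymMat{\DimX}$, its nonsingularity pulls back to that of $\AddInvMat$.

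The diagonal blocks of $T^{\MyTRANSPO} \QMIterms{\IDpoly}{\AIMexpolyVMat{\IDpoly}}{\AddInvMat}{\AIMFBgain}{\exEMSrate} T$ then match the corresponding blocks of the target CMI matrix by direct computation: the upper-left block reproduces $\exEMSrate^{2} \expolyVMat{\IDpoly}$ via \eqref{eq:AIMexpolyVMat_to_expolyVMat}, and the lower-right block reproduces $\exAddMat^{\MyTRANSPO} + \exAddMat - \expolyVMat{\IDpoly}$ after using $\exAddMat \Elimi{\AddInvMat \otimes \AddInvMat} = \Identity{\DimexX}$. The entire nontrivial content then boils down to the single off-diagonal identity
\begin{align}
\QMILowLeftblock{\IDpoly}{\AIMFBgain,\AddInvMat} \exAddMat = \Elimi{\expolyCLMat{\IDpoly}{\FBgain}}. \nonumber
\end{align}
I would prove this identity in two moves. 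First, substituting $\AIMFBgain = \FBgain \AddInvMat$ into \eqref{eq:QMILowLeftblock} and applying $(PQ) \otimes (RS) = (P \otimes R)(Q \otimes S)$ to each of the four Kronecker products, the common right-factor $(\AddInvMat \otimes \AddInvMat)$ pulls out, yielding $\QMILowLeftblock{\IDpoly}{\AIMFBgain,\AddInvMat} = \NonArgElimi\bigl( \expolyCLMat{\IDpoly}{\FBgain} (\AddInvMat \otimes \AddInvMat) \bigr)$. Second, I would show $\NonArgElimi\bigl( \expolyCLMat{\IDpoly}{\FBgain} (\AddInvMat \otimes \AddInvMat) \bigr) = \Elimi{\expolyCLMat{\IDpoly}{\FBgain}} \cdot \Elimi{\AddInvMat \otimes \AddInvMat}$, after which post-multiplication by $\exAddMat$ cancels $\Elimi{\AddInvMat \otimes \AddInvMat}$ and produces the required identity.

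The main obstacle is the second move: it amounts to inserting $\DupMat \EliMat$ between $\expolyCLMat{\IDpoly}{\FBgain}$ and $(\AddInvMat \otimes \AddInvMat)$ inside $\EliMat(\cdot)\DupMat$, even though $\DupMat \EliMat$ is only the projector onto $\VEC(\SetSymMat{\DimX})$ rather than the identity on $\mathbb{R}^{\DimX^{2}}$. The insertion is legitimate because both operators preserve $\VEC(\SetSymMat{\DimX})$: the range of $(\AddInvMat \otimes \AddInvMat)\DupMat$ lies in $\VEC(\SetSymMat{\DimX})$ since $(\AddInvMat \otimes \AddInvMat)\DupMat \VECH{X} = \VEC{\AddInvMat X \AddInvMat^{\MyTRANSPO}}$ for any symmetric $X$, and $\expolyCLMat{\IDpoly}{\FBgain}$ preserves $\VEC(\SetSymMat{\DimX})$ by its interpretation via Theorem \ref{thm:sys_to_exsys} as the update operator of the symmetric second moment $\VEC{\State{\MyT} \State{\MyT}^{\MyTRANSPO}}$. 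Once this symmetry-preservation is established, the congruence identity is immediate, and invertibility of $T$ transfers both $\succeq 0$ and $\succ 0$ verbatim from the QMI to the CMI. The final sentence of the theorem is then trivial: the map in \eqref{eq:AIMexpolyVMat_to_expolyVMat} depends on $\IDpoly$ only through $\AIMexpolyVMat{\IDpoly}$, so identical vertices $\AIMexpolyVMat{\IDpoly}$ yield identical $\expolyVMat{\IDpoly}$.
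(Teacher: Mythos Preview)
Your proposal is correct and takes essentially the same approach as the paper: both establish nonsingularity of $\AddInvMat$ from the lower-right block of the QMI, then apply the block-diagonal congruence by $\exAddMat$, invoking the mixed-product property of Kronecker products together with the Magnus identity $\DupMat\EliMat(\AddInvMat\otimes\AddInvMat)\DupMat=(\AddInvMat\otimes\AddInvMat)\DupMat$ to reduce the off-diagonal block to $\exAddMat^{\MyTRANSPO}\Elimi{\expolyCLMat{\IDpoly}{\FBgain}}$. The only cosmetic difference is the order of operations (the paper multiplies by $\exAddMat=\Elimi{\AddInvMat^{-1}\otimes\AddInvMat^{-1}}$ first and then simplifies, whereas you factor out $(\AddInvMat\otimes\AddInvMat)$ first), and your claim that $\expolyCLMat{\IDpoly}{\FBgain}$ itself preserves $\VEC(\SetSymMat{\DimX})$ is superfluous---only the range property of $(\AddInvMat\otimes\AddInvMat)\DupMat$ is needed for the insertion.
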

\begin{proof}
	The proof is described in Appendix \ref{pf:stability_QMI}.	
\end{proof}

\begin{remark}[{\MyHighlight{Contribution of Theorem \ref{thm:stability_QMI}}}]
The CMIs are transformed into the QMIs with respect to $({\AIMexpolyVMat{\IDpoly}},{\AddInvMat},{\AIMFBgain})$, which are expected to be easier to solve.
Solving the QMIs yields a stabilizing feedback gain $\FBgain$ in \eqref{eq:AIMFBgain_to_FBgain} subsequently, as illustrated in Fig. \ref{fig:overview}.
Furthermore, in Section \ref{sec_control_SDPs}, the QMIs are relaxed as an iterative convex program whereas the CMIs and QMIs are non-convex.
\end{remark}

\begin{remark}[\MyHighlight{Intuition of deriving Theorem \ref{thm:stability_QMI}}]
Multiplying the QMIs
${\QMIterms{\IDpoly}{\AIMexpolyVMat{\IDpoly}}{\AddInvMat}{\AIMFBgain}{\exEMSrate}} $ $\succeq 0$  by a block matrix using $\exAddMat$ in \eqref{eq:AddInvMat_to_exAddMat} yields the CMIs ${\CMIterms{\IDpoly}{\expolyVMat{\IDpoly}}{\exAddMat}{\FBgain}{\exEMSrate}} \succeq 0$.
While such a multiplication is based on robust control design \cite{Oliveira99,HosoeTAC18}, 
we show that this technique can be applied to matrix inequalities involving the compression operator $\NonArgElimi$ and Kronecker product $\otimes$ (the details are found in the proof in Appendix \ref{pf:stability_QMI}).

\end{remark}

\subsection{Solutions to Problem 2: Convex controller design}\label{sec_control_SDPs}

Solving the QMIs in {\itemMultiPERSQMIs} or {\itemMultiPRSQMIs} is still a non-convex program.
In this subsection, we relax the QMIs as an iterative convex program that is easy to solve.
The QMIs of $\AIMFBgain$ and $\AddInvMat$ reduce to LMIs of a new matrix $\dummyHMMat$ with a rank constraint.
The constrained LMIs are relaxed as an iteration of a convex SDP.
Solving the iterative SDP provides an approximate solution to the QMIs.

First, the quadratic terms of $\AIMFBgain$ and $\AddInvMat$ in the QMIs are replaced with linear terms of a rank-one matrix $\dummyHMMat$.
For any $\dummyHMMat \succeq 0 \in {\SetSymMat{\DimX(\DimX+\DimU)}}$,
if $\MyRank{\dummyHMMat}=1$ holds, there exist $\AddInvMat \in \mathbb{R}^{\DimX \times \DimX}$ and $\AIMFBgain \in \mathbb{R}^{\DimU \times \DimX}$ that satisfy	
\begin{align}
\dummyHMMat
&=
\begin{bmatrix}
\VEC{\AddInvMat} \\	 \VEC{\AIMFBgain} 
\end{bmatrix}
\begin{bmatrix}
\VEC{\AddInvMat} \\	 \VEC{\AIMFBgain} 
\end{bmatrix}^{\MyTRANSPO}
.
\label{eq:RankOneEigendecomposition}
\end{align}		
Using this relation transforms the QMIs in {\itemMultiPERSQMIs} and {\itemMultiPRSQMIs} into LMIs with the rank constraint $\MyRank{\dummyHMMat}=1$.
Let us define the following matrix-valued function that is linear in 
$\UNIAIMexpolyVMat \in {\SetSymMat{\DimexX}}$  and
$\dummyHMMat \in {\SetSymMat{\DimX(\DimX+\DimU)}}$
with fixed $\exEMSrate$:
\begin{align}
&{\SDPLMIterms{\IDpoly}{\UNIAIMexpolyVMat}{\dummyHMMat}{\exEMSrate}}
\VisibleColTwo{\nonumber\\&}
:=
\begin{bmatrix}
\exEMSrate^{2} 
\UNIAIMexpolyVMat
&    
{\LMILowLeftblock{\IDpoly}{\dummyHMMat}^{\MyTRANSPO}    }
\\
{\LMILowLeftblock{\IDpoly}{\dummyHMMat}}
&  
\Elimi{ \dummyHHfunc{\dummyHMMat} } + \Elimi{ \dummyHHfunc{\dummyHMMat} }^{\MyTRANSPO}   -  \UNIAIMexpolyVMat
\end{bmatrix}
.\label{eq:def_SDPLMIterms}
\end{align}	
The function ${\LMILowLeftblock{\IDpoly}{\dummyHMMat}}$ is defined by
\begin{align}
{\LMILowLeftblock{\IDpoly}{\dummyHMMat}}
&:=
\NonArgElimi
\big( 
{\expolyAAMat{\IDpoly}}  \dummyHHfunc{\dummyHMMat}
-{\expolyABMat{\IDpoly}} \dummyHMfunc{\dummyHMMat}
\VisibleColTwo{\nonumber \\&\qquad}
-{\expolyBAMat{\IDpoly}} \dummyMHfunc{\dummyHMMat}
+{\expolyBBMat{\IDpoly}} \dummyMMfunc{\dummyHMMat}
\big)
,\label{eq:LMILowLeftblock}
\end{align}
where   
$\dummyHHfunc{\dummyHMMat} \in \mathbb{R}^{\DimX^{2} \times \DimX^{2}}$, 
$\dummyHMfunc{\dummyHMMat} \in \mathbb{R}^{\DimU\DimX \times \DimX^{2}}$,
$\dummyMHfunc{\dummyHMMat} \in \mathbb{R}^{\DimU\DimX \times \DimX^{2}}$, and
$\dummyMMfunc{\dummyHMMat} \in \mathbb{R}^{\DimU^{2} \times \DimX^{2}}$ 
are the following linear functions of $\dummyHMMat \in {\SetSymMat{\DimX(\DimX+\DimU)}}$, using its block matrix form:
\begin{align}
{\El{ \dummyHHfunc{\dummyHMMat} }{\vectorWildCard, \DimX(\IDbEl-1) +\IDEl   }}
&:= \VEC{ {\BlockHMMat{\IDEl}{\IDbEl}}   }
,\label{eq:def2_dummyHHfunc}
\\
{\El{ \dummyHMfunc{\dummyHMMat} }{\vectorWildCard, \DimX(\IDbEl-1) +\IDEl   }}
&:= \VEC{ {\BlockHMMat{\DimX+\IDEl}{\IDbEl}}   }
,\\
{\El{ \dummyMHfunc{\dummyHMMat} }{\vectorWildCard, \DimX(\IDbEl-1) +\IDEl   }}
&:= \VEC{ {\BlockHMMat{\IDEl}{\DimX+\IDbEl}}   }
,\\
{\El{ \dummyMMfunc{\dummyHMMat} }{\vectorWildCard, \DimX(\IDbEl-1) +\IDEl   }}
&:= \VEC{ {\BlockHMMat{\DimX+\IDEl}{\DimX+\IDbEl}}   }
,\label{eq:def2_dummyMMfunc}
\\
\begin{bmatrix}
{\BlockHMMat{1}{1}} & \cdots & {\BlockHMMat{1}{2\DimX}} \\
\vdots & \ddots & \vdots \\
{\BlockHMMat{2\DimX}{1}} & \cdots & {\BlockHMMat{2\DimX}{2\DimX}} \\
\end{bmatrix}
&
:=
\dummyHMMat
,\label{eq:def2_BlockHMMat}
\end{align}
where  
${\BlockHMMat{\IDEl}{\IDbEl}} \in \mathbb{R}^{\DimX \times \DimX}$,
${\BlockHMMat{\DimX+\IDEl}{\IDbEl}} \in \mathbb{R}^{\DimU \times \DimX}$,
${\BlockHMMat{\IDEl}{\DimX+\IDbEl}} \in \mathbb{R}^{\DimX \times \DimU}$, and
${\BlockHMMat{\DimX+\IDEl}{\DimX+\IDbEl}} \in \mathbb{R}^{\DimU \times \DimU}$
for any $\IDEl \in \{1,\dots, \DimX\}$ and $\IDbEl \in \{1,\dots, \DimX\}$.
These functions are defined to satisfy the following relations.

\begin{proposition}[{\MyHighlight{Properties of the linear functions}}]\label{thm:dummyHMfunc}
	The condition \eqref{eq:RankOneEigendecomposition} implies the relations:			
	\begin{align}
	\dummyHHfunc{\dummyHMMat} &= \AddInvMat \otimes \AddInvMat
	,\label{eq:def_dummyHHfunc}
	\\
	\dummyHMfunc{\dummyHMMat} &= \AddInvMat \otimes \AIMFBgain
	,\\
	\dummyMHfunc{\dummyHMMat} &= \AIMFBgain \otimes \AddInvMat
	,\\
	\dummyMMfunc{\dummyHMMat} &= \AIMFBgain \otimes \AIMFBgain
	.\label{eq:def_dummyMMfunc}
	\end{align}
\end{proposition}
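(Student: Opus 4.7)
The plan is to reduce the identities to a direct column-by-column comparison, exploiting the rank-one factorization of $\dummyHMMat$ together with the standard vec/Kronecker identity $\VEC{xy^{\MyTRANSPO}} = y \otimes x$.

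First, I would interpret the stacking vector $[\VEC{\AddInvMat}^{\MyTRANSPO},\VEC{\AIMFBgain}^{\MyTRANSPO}]^{\MyTRANSPO}$ as partitioned into $2\DimX$ sub-blocks: the first $\DimX$ sub-blocks are the columns $a_{\IDEl} := \El{\AddInvMat}{\vectorWildCard,\IDEl} \in \mathbb{R}^{\DimX}$ of $\AddInvMat$, and the next $\DimX$ sub-blocks are the columns $k_{\IDEl} := \El{\AIMFBgain}{\vectorWildCard,\IDEl} \in \mathbb{R}^{\DimU}$ of $\AIMFBgain$. Under the rank-one hypothesis \eqref{eq:RankOneEigendecomposition}, the block entries of $\dummyHMMat$ in \eqref{eq:def2_BlockHMMat} are simply the outer products of these sub-blocks, i.e., $\BlockHMMat{\IDEl}{\IDbEl} = a_{\IDEl} a_{\IDbEl}^{\MyTRANSPO}$, $\BlockHMMat{\DimX+\IDEl}{\IDbEl} = k_{\IDEl} a_{\IDbEl}^{\MyTRANSPO}$, $\BlockHMMat{\IDEl}{\DimX+\IDbEl} = a_{\IDEl} k_{\IDbEl}^{\MyTRANSPO}$, and $\BlockHMMat{\DimX+\IDEl}{\DimX+\IDbEl} = k_{\IDEl} k_{\IDbEl}^{\MyTRANSPO}$, with matching dimensions.

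Second, applying $\VEC{xy^{\MyTRANSPO}} = y \otimes x$ to each block converts the column-wise definitions \eqref{eq:def2_dummyHHfunc}--\eqref{eq:def2_dummyMMfunc} into Kronecker-product form. For instance, the column of $\dummyHHfunc{\dummyHMMat}$ at index $\DimX(\IDbEl-1) + \IDEl$ becomes $a_{\IDbEl} \otimes a_{\IDEl}$; the analogous columns of $\dummyHMfunc$, $\dummyMHfunc$, $\dummyMMfunc$ become $a_{\IDbEl} \otimes k_{\IDEl}$, $k_{\IDbEl} \otimes a_{\IDEl}$, and $k_{\IDbEl} \otimes k_{\IDEl}$, respectively.

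Third, I would invoke the definition of the Kronecker product in block form to verify that these are precisely the columns of $\AddInvMat \otimes \AddInvMat$, $\AddInvMat \otimes \AIMFBgain$, $\AIMFBgain \otimes \AddInvMat$, and $\AIMFBgain \otimes \AIMFBgain$ at the same column index: for any matrices $\tempAmat,\tempBmat$ with appropriate sizes, the $(\DimX(\IDbEl-1)+\IDEl)$-th column of $\tempAmat \otimes \tempBmat$ equals $\El{\tempAmat}{\vectorWildCard,\IDbEl} \otimes \El{\tempBmat}{\vectorWildCard,\IDEl}$. Matching indices completes the four identities simultaneously. The step that requires the most care is the bookkeeping of the column ordering in Kronecker products (choice of column-major convention) and making sure the role of $\IDEl$ versus $\IDbEl$ is consistent with the paper's definitions; once this convention is fixed, the identities follow directly with no further computation.
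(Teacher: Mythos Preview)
Your proposal is correct and follows essentially the same approach as the paper's proof: both identify the blocks $\BlockHMMat{\IDEl}{\IDbEl}$ as outer products of the columns of $\AddInvMat$ and $\AIMFBgain$ via the rank-one factorization, and then verify column-by-column that $\VEC{\BlockHMMat{\IDEl}{\IDbEl}}$ coincides with the corresponding column of the claimed Kronecker product. The paper writes out the column of $\AddInvMat\otimes\AddInvMat$ explicitly as a stacked vector and recognizes it as $\VEC{\El{\AddInvMat}{\vectorWildCard,\IDEl}\El{\AddInvMat}{\vectorWildCard,\IDbEl}^{\MyTRANSPO}}$, whereas you phrase the same step through the identity $\VEC{xy^{\MyTRANSPO}}=y\otimes x$ together with the column structure of a Kronecker product; these are equivalent formulations of the same computation, and your bookkeeping of the indices is consistent with the paper's conventions.
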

\begin{proof}
	The proof is described in Appendix \ref{pf:dummyHMfunc}.
\end{proof}

Based on these definitions and Proposition \ref{thm:dummyHMfunc}, we transform the QMIs in {\itemMultiPERSQMIs} and {\itemMultiPRSQMIs} into constrained LMIs as follows.

\begin{theorem}[{\MyHighlight{Controller design via constrained LMIs}}]\label{thm:stability_QMIwithRankOne}
For a given $\exEMSrate \in (0, 1)$,
the following {\itemMultiPERSSDPs} and {\itemMultiPRSSDPs}
are equivalent to 
{\itemMultiPERSQMIs} and  {\itemMultiPRSQMIs}, respectively, with the setting \eqref{eq:RankOneEigendecomposition}:
\begin{enumerate}
	\setlength{\itemindent}{10pt}

	\item[{\itemMultiPERSSDPs}]
	There exist ${\AIMexpolyVMat{\IDpoly}} \succ 0 \in {\SetSymMat{\DimexX}}$ for $\IDpoly \in \{1,\dots, \NUMpoly \}$ and  $\dummyHMMat \succeq 0 \in {\SetSymMat{\DimX(\DimX+\DimU)}}$ such that 
	the following LMIs and $\MyRank{\dummyHMMat}=1$ hold:	
	\begin{align}
	\forall \IDpoly \in \{1,\dots, \NUMpoly \}
	,\quad
	{\SDPLMIterms{\IDpoly}{\AIMexpolyVMat{\IDpoly}}{\dummyHMMat}{\exEMSrate}}
	\succeq 0
	.\label{eq:expanded_ERS_SDPLMI}
	\end{align}

	\item[{\itemMultiPRSSDPs}]
	There exist ${\AIMexpolyVMat{\IDpoly}} \succ 0 \in {\SetSymMat{\DimexX}}$ for $\IDpoly \in \{1,\dots, \NUMpoly \}$ and $\dummyHMMat \succeq 0 \in {\SetSymMat{\DimX(\DimX+\DimU)}}$ such that 
	the following LMIs and $\MyRank{\dummyHMMat}=1$ hold:	
	\begin{align}
	\forall \IDpoly \in \{1,\dots, \NUMpoly \}
	,\quad
	{\SDPLMIterms{\IDpoly}{\AIMexpolyVMat{\IDpoly}}{\dummyHMMat}{1}}
	\succ 0
	.\label{eq:expanded_RS_SDPLMI}
	\end{align}

\end{enumerate}	
\end{theorem}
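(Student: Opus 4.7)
The plan is to prove the equivalence by a block-by-block comparison of the matrix-valued functions $\QMIterms{\IDpoly}{\cdot}{\cdot}{\cdot}{\exEMSrate}$ defined in \eqref{eq:def_QMIterms} and $\SDPLMIterms{\IDpoly}{\cdot}{\cdot}{\exEMSrate}$ defined in \eqref{eq:def_SDPLMIterms}. The essential machinery is already supplied by Proposition \ref{thm:dummyHMfunc}: under the rank-one relation \eqref{eq:RankOneEigendecomposition}, the four linear functions $\dummyHHfunc{\dummyHMMat}, \dummyHMfunc{\dummyHMMat}, \dummyMHfunc{\dummyHMMat}, \dummyMMfunc{\dummyHMMat}$ collapse to the Kronecker products $\AddInvMat\otimes\AddInvMat$, $\AddInvMat\otimes\AIMFBgain$, $\AIMFBgain\otimes\AddInvMat$, $\AIMFBgain\otimes\AIMFBgain$, respectively. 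Substituting these identities into \eqref{eq:LMILowLeftblock} shows $\LMILowLeftblock{\IDpoly}{\dummyHMMat} = \QMILowLeftblock{\IDpoly}{\AIMFBgain,\AddInvMat}$, and the lower-right and upper-left blocks of the two functions coincide as well. Hence, under \eqref{eq:RankOneEigendecomposition}, the two block matrices are identical as elements of $\SetSymMat{\DimexX+\DimX^{2}}$.

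For the forward direction, assume {\itemMultiPERSQMIs} holds with variables $({\AIMexpolyVMat{\IDpoly}}, \AddInvMat, \AIMFBgain)$. Define $\dummyHMMat$ by the outer-product formula \eqref{eq:RankOneEigendecomposition}. Then $\dummyHMMat \succeq 0$ by construction and $\MyRank{\dummyHMMat} \leq 1$; the degenerate case $\AddInvMat = 0$, $\AIMFBgain = 0$ can be discarded because the lower-right block of the QMI would reduce to $-\AIMexpolyVMat{\IDpoly} \prec 0$, contradicting \eqref{eq:expanded_ERS_QMI}. So $\MyRank{\dummyHMMat} = 1$, and by the block-equality noted above the LMI \eqref{eq:expanded_ERS_SDPLMI} holds with the same $\AIMexpolyVMat{\IDpoly}$, establishing {\itemMultiPERSSDPs}.

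For the reverse direction, suppose {\itemMultiPERSSDPs} holds with $(\AIMexpolyVMat{\IDpoly}, \dummyHMMat)$. Because $\dummyHMMat \succeq 0$ and $\MyRank{\dummyHMMat} = 1$, there exists a nonzero vector $\NotationVec \in \mathbb{R}^{\DimX(\DimX+\DimU)}$ with $\dummyHMMat = \NotationVec \NotationVec^{\MyTRANSPO}$. Partition $\NotationVec$ as $\NotationVec = [\coefASchwarz^{\MyTRANSPO}, \coefBSchwarz^{\MyTRANSPO}]^{\MyTRANSPO}$ with $\coefASchwarz \in \mathbb{R}^{\DimX^{2}}$ and $\coefBSchwarz \in \mathbb{R}^{\DimU\DimX}$, and reconstitute $\AddInvMat \in \mathbb{R}^{\DimX\times\DimX}$, $\AIMFBgain \in \mathbb{R}^{\DimU\times\DimX}$ by $\VEC{\AddInvMat} = \coefASchwarz$ and $\VEC{\AIMFBgain} = \coefBSchwarz$. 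Then \eqref{eq:RankOneEigendecomposition} is satisfied, and the block-equality again converts the LMI \eqref{eq:expanded_ERS_SDPLMI} into the QMI \eqref{eq:expanded_ERS_QMI}, so {\itemMultiPERSQMIs} holds. The equivalence between {\itemMultiPRSSDPs} and {\itemMultiPRSQMIs} is obtained by replacing $\succeq$ with $\succ$ and setting $\exEMSrate = 1$ throughout, with no change in the argument.

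The hard part is not really hard: the entire proof is an unpacking of definitions glued together by Proposition \ref{thm:dummyHMfunc}. The only subtlety worth noting is the sign ambiguity in the rank-one factorization $\dummyHMMat = \NotationVec\NotationVec^{\MyTRANSPO} = (-\NotationVec)(-\NotationVec)^{\MyTRANSPO}$, but since every identity in Proposition \ref{thm:dummyHMfunc} is quadratic in the pair $(\AddInvMat, \AIMFBgain)$, a simultaneous sign flip leaves $\AddInvMat\otimes\AddInvMat$, $\AddInvMat\otimes\AIMFBgain$, $\AIMFBgain\otimes\AddInvMat$, $\AIMFBgain\otimes\AIMFBgain$ unchanged, and thus does not affect either block of the QMI. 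Hence the correspondence $(\AddInvMat, \AIMFBgain) \leftrightarrow \dummyHMMat$ induced by \eqref{eq:RankOneEigendecomposition} is well-defined up to this harmless sign, and the equivalence is clean.
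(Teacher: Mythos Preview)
Your proof is correct and follows essentially the same route as the paper's: both directions hinge on Proposition~\ref{thm:dummyHMfunc} to identify $\SDPLMIterms{\IDpoly}{\AIMexpolyVMat{\IDpoly}}{\dummyHMMat}{\exEMSrate}$ with $\QMIterms{\IDpoly}{\AIMexpolyVMat{\IDpoly}}{\AddInvMat}{\AIMFBgain}{\exEMSrate}$ block by block under \eqref{eq:RankOneEigendecomposition}, then one constructs $\dummyHMMat$ from $(\AddInvMat,\AIMFBgain)$ or factors a rank-one $\dummyHMMat$ back into $(\AddInvMat,\AIMFBgain)$. The paper rules out $\dummyHMMat=0$ by invoking the nonsingularity of $\AddInvMat$ established earlier in the proof of Theorem~\ref{thm:stability_QMI}, whereas you argue directly that $\AddInvMat=\AIMFBgain=0$ would force the lower-right block to be $-\AIMexpolyVMat{\IDpoly}\prec 0$; both are valid. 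One small slip: the block matrices live in $\SetSymMat{2\DimexX}$, not $\SetSymMat{\DimexX+\DimX^{2}}$, but this does not affect the argument.
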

\begin{proof}
The proof is described in Appendix \ref{pf:stability_QMIwithRankOne}.
\end{proof}

Note that the LMIs in {\itemMultiPERSSDPs} and {\itemMultiPRSSDPs} are just linear in ${\AIMexpolyVMat{\IDpoly}}$ and $\dummyHMMat$.
A remaining challenge is to exclude the rank constraint $\MyRank{\dummyHMMat}=1$ that invokes the non-convexity in solving the LMIs.

In the following, we relax the rank-constrained LMIs as an iterative convex SDP based on the ideas of capped trace norm minimization \cite{ZhangJMLR10,Liu19}.
Let us define the absolute sum of the eigenvalues of symmetric $\dummyHMMat \succeq 0 \in {\SetSymMat{\DimX(\DimX+\DimU)}}$, except the maximum eigenvalue ${\iEig{1}{ {\dummyHMMat} }}$, as follows: 
\begin{align}
{\rankoneErr{\dummyHMMat}}
&:=
\sum_{\IDEl=2}^{\DimX(\DimX+\DimU)}
|{\iEig{\IDEl}{ {\dummyHMMat} }}|
\geq 0
. \label{eq:def_rankoneErr}
\end{align}
Solving the LMIs in {\itemMultiPERSSDPs} and {\itemMultiPRSSDPs} with the rank constraint reduces to minimize ${\rankoneErr{\dummyHMMat}}$ under the LMIs because $\MyRank{\dummyHMMat}=1$ is equivalent to ${\rankoneErr{\dummyHMMat}}=0$ with ${\iEig{1}{ {\dummyHMMat} }}> 0$.
Whereas ${\rankoneErr{\dummyHMMat}}$ is nonlinear in $\dummyHMMat$,
we focus on the fact that the minimization of ${\rankoneErr{\dummyHMMat}}$ has a form similar to the capped trace norm minimization \cite{Liu19}.
This motivates us to employ the following function ${\rankoneApproxErr{\dummyHMMat}{\dummyPreHMMat}}$ of
$\dummyHMMat \in {\SetSymMat{\DimX(\DimX+\DimU)}}$ and $\dummyPreHMMat \in {\SetSymMat{\DimX(\DimX+\DimU)}}$
 for approximating ${\rankoneErr{\dummyHMMat}}$:
\begin{align}
{\rankoneApproxErr{\dummyHMMat}{\dummyPreHMMat}}
:=
\TRACE{
\dummyHMMat
}
-
{\iEigVec{1}{\dummyPreHMMat}}^{\MyTRANSPO} 
\dummyHMMat  
{\iEigVec{1}{\dummyPreHMMat}} 
,
\end{align}
where $\iEigVec{1}{\dummyPreHMMat}$ is the unit eigenvector corresponding to $\iEig{1}{\dummyPreHMMat}$ defined in Section \ref{sec_notation}.
We show that ${\rankoneApproxErr{\dummyHMMat}{\dummyPreHMMat}}$ is an upper bound of ${\rankoneErr{\dummyHMMat}}$, as follows.

\begin{lemma}[{\MyHighlight{Approximate linear functions}}]\label{thm:multiSDPfunc}
Suppose that $\dummyPreHMMat \neq 0$ holds.
For any $\dummyHMMat\succeq 0 $ and $\dummyPreHMMat \succeq 0$,
the following relations hold:	
\begin{align}
{\rankoneApproxErr{\dummyHMMat}{\dummyHMMat}}
&=
{\rankoneErr{\dummyHMMat}}
,
\\
{\rankoneApproxErr{\dummyHMMat}{\dummyPreHMMat}}
&\geq
{\rankoneErr{\dummyHMMat}}
.
\end{align}		
\end{lemma}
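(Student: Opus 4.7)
The plan is a direct computation using the Rayleigh--Ritz variational characterization of the largest eigenvalue, with no real substantive difficulty.

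First, I would simplify $\rankoneErr{\dummyHMMat}$. Since $\dummyHMMat \succeq 0$, every eigenvalue satisfies $\iEig{\IDEl}{\dummyHMMat} \geq 0$, so the absolute values in \eqref{eq:def_rankoneErr} can be dropped. Combining this with the identity $\TRACE{\dummyHMMat} = \sum_{\IDEl=1}^{\DimX(\DimX+\DimU)} \iEig{\IDEl}{\dummyHMMat}$ yields
\begin{align}
\rankoneErr{\dummyHMMat} = \sum_{\IDEl=2}^{\DimX(\DimX+\DimU)} \iEig{\IDEl}{\dummyHMMat} = \TRACE{\dummyHMMat} - \iEig{1}{\dummyHMMat}. \nonumber
\end{align}
The remaining task is then to relate the quadratic form $\iEigVec{1}{\dummyPreHMMat}^{\MyTRANSPO} \dummyHMMat \iEigVec{1}{\dummyPreHMMat}$ to $\iEig{1}{\dummyHMMat}$.

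For the equality claim, I set $\dummyPreHMMat = \dummyHMMat$ and apply the eigen-equation $\dummyHMMat \iEigVec{1}{\dummyHMMat} = \iEig{1}{\dummyHMMat} \iEigVec{1}{\dummyHMMat}$ together with the unit normalization $\iEigVec{1}{\dummyHMMat}^{\MyTRANSPO} \iEigVec{1}{\dummyHMMat} = 1$ from Section~\ref{sec_notation}. This gives $\iEigVec{1}{\dummyHMMat}^{\MyTRANSPO} \dummyHMMat \iEigVec{1}{\dummyHMMat} = \iEig{1}{\dummyHMMat}$, so $\rankoneApproxErr{\dummyHMMat}{\dummyHMMat} = \TRACE{\dummyHMMat} - \iEig{1}{\dummyHMMat}$, which matches the expression for $\rankoneErr{\dummyHMMat}$ displayed above.

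For the inequality, I invoke the Rayleigh--Ritz theorem, which states that any unit vector $\xi$ satisfies $\xi^{\MyTRANSPO} \dummyHMMat \xi \leq \iEig{1}{\dummyHMMat}$ (the maximum being attained at $\xi = \iEigVec{1}{\dummyHMMat}$). The hypothesis $\dummyPreHMMat \neq 0$ combined with $\dummyPreHMMat \succeq 0$ ensures that $\iEigVec{1}{\dummyPreHMMat}$ is a well-defined unit vector, so applying Rayleigh--Ritz with $\xi = \iEigVec{1}{\dummyPreHMMat}$ yields
\begin{align}
\rankoneApproxErr{\dummyHMMat}{\dummyPreHMMat}
= \TRACE{\dummyHMMat} - \iEigVec{1}{\dummyPreHMMat}^{\MyTRANSPO} \dummyHMMat \iEigVec{1}{\dummyPreHMMat}
\geq \TRACE{\dummyHMMat} - \iEig{1}{\dummyHMMat}
= \rankoneErr{\dummyHMMat}. \nonumber
\end{align}
There is no genuine obstacle; the lemma reduces to one line of Rayleigh--Ritz once the trace is expanded as a sum of eigenvalues. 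The only subtlety is that the role of the assumption $\dummyPreHMMat \neq 0$ is purely to ensure the top eigenvector is canonically defined, and it plays no further part in the estimates.
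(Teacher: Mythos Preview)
Your argument is correct; both relations follow exactly as you write. Your route, however, differs from the paper's. For the equality the two proofs coincide. For the inequality, the paper does not appeal to Rayleigh--Ritz on $\iEigVec{1}{\dummyPreHMMat}$ directly; instead it expands $\Identity{\DimX(\DimX+\DimU)}=\sum_{\IDEl}\iEigVec{\IDEl}{\dummyPreHMMat}\iEigVec{\IDEl}{\dummyPreHMMat}^{\MyTRANSPO}$ via the spectral decomposition of $\dummyPreHMMat$, rewrites ${\rankoneApproxErr{\dummyHMMat}{\dummyPreHMMat}}$ as $\TRACE[\big]{\big(\sum_{\IDbEl\geq 2}\iEigVec{\IDbEl}{\dummyPreHMMat}\iEigVec{\IDbEl}{\dummyPreHMMat}^{\MyTRANSPO}\big)\dummyHMMat}$, and then invokes the trace inequality $\TRACE{\NotationSymMat_{1}\NotationSymMat_{2}}\geq\sum_{\IDEl}\iEig{\IDEl}{\NotationSymMat_{1}}\iEig{\DimX-\IDEl+1}{\NotationSymMat_{2}}$ for positive semidefinite matrices (Lasserre). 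Since the projector $\sum_{\IDbEl\geq 2}\iEigVec{\IDbEl}{\dummyPreHMMat}\iEigVec{\IDbEl}{\dummyPreHMMat}^{\MyTRANSPO}$ has eigenvalues $(1,\dots,1,0)$, this yields $\sum_{\IDEl\geq 2}\iEig{\IDEl}{\dummyHMMat}={\rankoneErr{\dummyHMMat}}$. Your Rayleigh--Ritz argument is strictly more elementary and shorter, avoiding both the spectral decomposition of $\dummyPreHMMat$ and the trace product inequality; the paper's approach, on the other hand, is closer to the capped-trace-norm machinery of \cite{Liu19} that motivates the construction and would generalize more readily if one wanted to cap more than the top eigenvalue.
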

\begin{proof}
	The proof is described in Appendix \ref{pf:multiSDPfunc}.
\end{proof}

In addition to the above relations,
an efficient property is that ${\rankoneApproxErr{\dummyHMMat}{\dummyPreHMMat}}$ is linear in $\dummyHMMat$ with a fixed $\dummyPreHMMat$, which is convex.
Using these properties, we solve the following convex SDP iteratively instead of {\itemMultiPERSSDPs} or {\itemMultiPRSSDPs}.
For each $\IDite$-th iteration,
let ${\dummyIteHMMat{\IDite}}$ be a solution to the SDP:
\begin{align}
\min_{
	\dummyHMMat,  {\AIMexpolyVMat{1}}, \dots , {\AIMexpolyVMat{\NUMpoly}} 
 } 
{\rankoneApproxErr{\dummyHMMat}{\dummyIteHMMat{\IDite-1}}}
\quad 
\mathrm{s.t.}
\qquad 
\VisibleColTwo{&
\nonumber\\}
\forall  \IDpoly \in \{1,\dots, \NUMpoly \},
\quad
{\SDPLMIterms{\IDpoly}{\AIMexpolyVMat{\IDpoly}}{\dummyHMMat}{\exEMSrate}}  &\succeq  \smallmarginLMIs {\Identity{2\DimexX}}
,\nonumber\\
\forall  \IDpoly \in \{1,\dots, \NUMpoly \},
\quad
{\AIMexpolyVMat{\IDpoly}} &\succeq  \smallmarginLMIs {\Identity{\DimexX}}
,\nonumber\\
\dummyHMMat &\succeq 0
,\nonumber \\
\TRACE{ \dummyHMMat } & \leq \UBdummyHMMat
, \label{eq:def_multiSDP}
\end{align}	
where $\smallmarginLMIs \geq 0$ and $\UBdummyHMMat>0$ are free parameters.
The first and second inequalities describe {\itemMultiPERSSDPs} or {\itemMultiPRSSDPs}, where the positive value of $\smallmarginLMIs$ helps satisfy the positive (semi)definiteness even if numerical errors occur.
The inequality $\TRACE{ \dummyHMMat } \leq \UBdummyHMMat$ is employed to avoid divergence of $\dummyHMMat$.
Let ${\dummyIteHMMat{1}}$ be a solution to \eqref{eq:def_multiSDP} when replacing ${\rankoneApproxErr{\dummyHMMat}{\dummyIteHMMat{\IDite}}}$ with $\TRACE{\dummyHMMat}$, which is similar to trace norm minimization \cite{Bach08}.
Because the minimization in \eqref{eq:def_multiSDP} is convex, the following optimality is assumed:
\begin{align}
{\rankoneApproxErr{\dummyIteHMMat{\IDite}}{\dummyIteHMMat{\IDite-1}}}
\leq 
{\rankoneApproxErr{\dummyIteHMMat{\IDite-1}}{\dummyIteHMMat{\IDite-1}}}
.  \label{eq:optimality_multiSDP}
\end{align}
After solving the SDP \eqref{eq:def_multiSDP} iteratively, its solution $\optdummyHMMat={\dummyIteHMMat{\IDite}}$ can be successfully approximated using the rank-one matrix $\approxdummyHMMat$ associated with the maximum eigenvalue ${\iEig{1}{\optdummyHMMat}} $:
\begin{align}
\approxdummyHMMat 
&:=
{\iEig{1}{\optdummyHMMat}} 
{\iEigVec{1}{\optdummyHMMat}} 
{\iEigVec{1}{\optdummyHMMat}}^{\MyTRANSPO}
.\label{eq:RankOneEigendecompositionRank1_approx}
\end{align}
Note that the approximation residual $\optdummyHMMat-\approxdummyHMMat
=
\sum_{\IDEl=2}^{\DimX(\DimX+\DimU)}
{\iEig{\IDEl}{\optdummyHMMat}} 
{\iEigVec{\IDEl}{\optdummyHMMat}} 
{\iEigVec{\IDEl}{\optdummyHMMat}}^{\MyTRANSPO}$ can be negligible if ${\rankoneErr{\optdummyHMMat}}$ in \eqref{eq:def_rankoneErr} is sufficiently decreased.

Algorithm \ref{alg1} summarizes the proposed iterative SDP.
The iteration is terminated when ${\rankoneErr{\dummyIteHMMat{\IDite}}}
\geq {\rankoneErr{\dummyIteHMMat{\IDite-1}}}
-\MultiSDPterminationVal
$ holds, where $\MultiSDPterminationVal\geq 0$ is a small threshold. 
We derive the following result to justify Algorithm \ref{alg1} in the sense that ${\rankoneErr{\dummyIteHMMat{\IDite}}}$ is successfully decreased.

\begin{theorem}[{\MyHighlight{Monotonically non-increasing iteration}}]\label{{thm:multiSDPmonodec}}
	For each $\IDite\geq 1$, supposing that \eqref{eq:optimality_multiSDP} and  ${\dummyIteHMMat{\IDite-1}} \neq 0$ hold, 
	\begin{align}
	&
	{\rankoneErr{\dummyIteHMMat{\IDite}}}
	\leq
	{\rankoneErr{\dummyIteHMMat{\IDite-1}}}
	,\label{eq:optimality_multiSDP2}
	\end{align}
holds, where the strict inequality of \eqref{eq:optimality_multiSDP} implies that of \eqref{eq:optimality_multiSDP2}.
\end{theorem}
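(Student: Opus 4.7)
The plan is to chain together the two parts of Lemma \ref{thm:multiSDPfunc} with the assumed optimality \eqref{eq:optimality_multiSDP}, in the style of a standard majorization-minimization (MM) convergence argument. The key observation is that $\rankoneApproxErr{\cdot}{\dummyPreHMMat}$ is a surrogate that both (a) coincides with $\rankoneErr{\cdot}$ at $\dummyHMMat=\dummyPreHMMat$ and (b) upper-bounds $\rankoneErr{\cdot}$ elsewhere.

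Concretely, first I would note that the assumption $\dummyIteHMMat{\IDite-1} \neq 0$ guarantees $\iEigVec{1}{\dummyIteHMMat{\IDite-1}}$ is well-defined, so that $\rankoneApproxErr{\cdot}{\dummyIteHMMat{\IDite-1}}$ is well-defined in both arguments. Next I would apply Lemma \ref{thm:multiSDPfunc} with $\dummyHMMat=\dummyIteHMMat{\IDite}$ and $\dummyPreHMMat=\dummyIteHMMat{\IDite-1}$, which gives the upper-bound inequality
\begin{align}
\rankoneErr{\dummyIteHMMat{\IDite}}
\leq
\rankoneApproxErr{\dummyIteHMMat{\IDite}}{\dummyIteHMMat{\IDite-1}}.
\nonumber
\end{align}
By the optimality condition \eqref{eq:optimality_multiSDP}, the right-hand side is bounded by $\rankoneApproxErr{\dummyIteHMMat{\IDite-1}}{\dummyIteHMMat{\IDite-1}}$, and by the equality part of Lemma \ref{thm:multiSDPfunc} (with both arguments equal to $\dummyIteHMMat{\IDite-1}$) this in turn equals $\rankoneErr{\dummyIteHMMat{\IDite-1}}$. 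Concatenating these three comparisons yields \eqref{eq:optimality_multiSDP2}.

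For the strict-inequality claim, I would observe that the only weak inequality in the chain that carries strictness information is the middle step, namely \eqref{eq:optimality_multiSDP}; the other two steps are equalities or weak inequalities from the lemma, and in particular the final step $\rankoneApproxErr{\dummyIteHMMat{\IDite-1}}{\dummyIteHMMat{\IDite-1}}=\rankoneErr{\dummyIteHMMat{\IDite-1}}$ is an equality. Hence any strict gap in \eqref{eq:optimality_multiSDP} propagates directly to a strict gap in \eqref{eq:optimality_multiSDP2}.

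I do not anticipate a serious obstacle: this is essentially the textbook MM monotonicity argument, and all the heavy lifting (constructing the surrogate, establishing its tightness and upper-bound property) has already been done in Lemma \ref{thm:multiSDPfunc}. The only mild care needed is to confirm that the assumption $\dummyIteHMMat{\IDite-1}\neq 0$ indeed ensures that $\iEigVec{1}{\dummyIteHMMat{\IDite-1}}$ used inside $\rankoneApproxErr{\cdot}{\dummyIteHMMat{\IDite-1}}$ is unambiguously defined, so that the lemma is applicable.
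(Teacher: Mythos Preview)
Your proposal is correct and matches the paper's own proof essentially line for line: the paper simply writes the chain
\[
\rankoneErr{\dummyIteHMMat{\IDite}}
\leq
\rankoneApproxErr{\dummyIteHMMat{\IDite}}{\dummyIteHMMat{\IDite-1}}
\leq
\rankoneApproxErr{\dummyIteHMMat{\IDite-1}}{\dummyIteHMMat{\IDite-1}}
=
\rankoneErr{\dummyIteHMMat{\IDite-1}},
\]
invoking Lemma~\ref{thm:multiSDPfunc} for the outer comparisons and \eqref{eq:optimality_multiSDP} for the middle one, exactly as you describe. Your remark that strictness in \eqref{eq:optimality_multiSDP} propagates through the chain (because the last step is an equality and the first is a weak inequality) is likewise the same observation the paper relies on.
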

\begin{proof}
	Using Lemma \ref{thm:multiSDPfunc} with \eqref{eq:optimality_multiSDP} yields 
	\begin{align}
	{\rankoneErr{\dummyIteHMMat{\IDite}}}
	\leq
	{\rankoneApproxErr{\dummyIteHMMat{\IDite}}{\dummyIteHMMat{\IDite-1}}}
	\leq 
	{\rankoneApproxErr{\dummyIteHMMat{\IDite-1}}{\dummyIteHMMat{\IDite-1}}}
	=
	{\rankoneErr{\dummyIteHMMat{\IDite-1}}}
	.
	\end{align}	
	This completes the proof.
\end{proof}	

\begin{algorithm}[!t]    
	\renewcommand{\algorithmicrequire}{\textbf{Input:}}
	\renewcommand{\algorithmicensure}{\textbf{Output:}}                       
	\caption{Proposed controller design with the iterative SDP}  
	\label{alg1} 
	\begin{algorithmic}[1]                  
		\REQUIRE
		Vertices ${\polySMvecAB{\IDpoly}}$ for $\IDpoly \in \{1,\dots, \NUMpoly \}$ and $\NUMpoly$ of the SMP system,
		$\exEMSrate$, 
		the free parameters $\smallmarginLMIs$ and $\UBdummyHMMat$ used in the SDP \eqref{eq:def_multiSDP},
		and 
		$\MultiSDPterminationVal$
		\ENSURE Rank-one matrix $\approxdummyHMMat$ and ${\AIMexpolyVMat{\IDpoly}}$
				
		\STATE
		Calculate ${\expolyAAMat{\IDpoly}}$,
		${\expolyABMat{\IDpoly}}$,
		${\expolyBAMat{\IDpoly}}$, and 
		${\expolyBBMat{\IDpoly}}$ according to \eqref{eq:def_expolyAAMat}--\eqref{eq:def_expolyBBMat} using ${\polySMvecAB{\IDpoly}}$ and $\NUMpoly$

		\STATE
		Develop the matrix-valued function ${\SDPLMIterms{\IDpoly}{\UNIAIMexpolyVMat}{\dummyHMMat}{\exEMSrate}}$ in \eqref{eq:def_SDPLMIterms} using 
		 ${\expolyAAMat{\IDpoly}}$, ${\expolyABMat{\IDpoly}}$, ${\expolyBAMat{\IDpoly}}$, ${\expolyBBMat{\IDpoly}}$, and $\exEMSrate$		
				
		\STATE	
		Obtain ${\dummyIteHMMat{1}}$ by solving the SDP \eqref{eq:def_multiSDP} when replacing ${\rankoneApproxErr{\dummyHMMat}{\dummyIteHMMat{\IDite}}}$ with $\TRACE{\dummyHMMat}$,
		
		\STATE
		Set $\IDite \gets 1$ 
		
		\REPEAT
		\STATE
		Set $\IDite \gets \IDite+1$ 
		
		\STATE	
		Obtain ${\dummyIteHMMat{\IDite}}$ and ${\AIMexpolyVMat{\IDpoly}}$ by solving the SDP \eqref{eq:def_multiSDP} with substituting ${\dummyIteHMMat{\IDite-1}}$.

		\UNTIL{ 			
			${\rankoneErr{\dummyIteHMMat{\IDite}}}
			\geq {\rankoneErr{\dummyIteHMMat{\IDite-1}}}
			-\MultiSDPterminationVal
			$ 
		} 
		\STATE
		Obtain the approximately rank-one matrix $\optdummyHMMat \gets {\dummyIteHMMat{\IDite}}$

		\STATE
		Obtain the rank-one matrix $\approxdummyHMMat$ by \eqref{eq:RankOneEigendecompositionRank1_approx}
		
	\end{algorithmic}		
\end{algorithm}

If {\itemMultiPERSSDPs} or {\itemMultiPRSSDPs} is satisfied by $\dummyHMMat=\approxdummyHMMat$ in \eqref{eq:RankOneEigendecompositionRank1_approx} and a solution ${\AIMexpolyVMat{\IDpoly}}$ (in the $\IDite$-th iteration) to the SDP,
a stabilizing feedback gain $\FBgain$ is obtained.
Consequently, solutions to Problem 2 are summarized, using Theorems \ref{thm:stability_QMI} and \ref{thm:stability_QMIwithRankOne} and Corollary \ref{thm:solution_to_Problem1}.

\begin{corollary}[{\MyHighlight{Solutions to Problem 2}}]\label{thm:solution_to_Problem2}
The SMP system {\eqref{eq:def_CLsys}} is exponentially robustly MS stable (resp. robustly MS stable) if at least one of the following (i)--(iv) is satisfied and if $\FBgain$ is given by \eqref{eq:AIMFBgain_to_FBgain} and \eqref{eq:RankOneEigendecomposition}:
		\begin{enumerate}
			\item 
			{\itemMultiPERSQMIs} (resp. {\itemMultiPRSQMIs}) is solved and the system is TI. 
			\item 
			{\itemMultiPERSQMIs} (resp. {\itemMultiPRSQMIs}) is solved when replacing ${\AIMexpolyVMat{\IDpoly}}$ for $\IDpoly \in \{1,\dots, \NUMpoly \}$ with an identical $\UNIAIMexpolyVMat$.
			\item
			Solutions  $\dummyHMMat=\approxdummyHMMat$ and ${\AIMexpolyVMat{\IDpoly}}$ to the iterative SDP \eqref{eq:def_multiSDP}
			satisfy {\itemMultiPERSSDPs} (resp. {\itemMultiPRSSDPs}) and the system is TI.			 
			\item
			Solutions  $\dummyHMMat=\approxdummyHMMat$ and $\UNIAIMexpolyVMat$ to the iterative SDP \eqref{eq:def_multiSDP}
			satisfy {\itemMultiPERSSDPs}  (resp. {\itemMultiPRSSDPs}) when replacing ${\AIMexpolyVMat{\IDpoly}}$ for $\IDpoly \in \{1,\dots, \NUMpoly \}$ with an identical $\UNIAIMexpolyVMat$.
		\end{enumerate}
\end{corollary}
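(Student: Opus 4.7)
The plan is to chain together the implications already established in Theorems \ref{thm:stability_QMI} and \ref{thm:stability_QMIwithRankOne} together with Corollary \ref{thm:solution_to_Problem1}, essentially reading Figure \ref{fig:overview} from bottom to top. Under each of the four hypotheses I would walk upward through the pipeline until reaching (exponential) robust MS stability of the SMP system.

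First I would dispatch items (i) and (ii). Suppose {\itemMultiPERSQMIs} (resp.\ {\itemMultiPRSQMIs}) is solvable by some $({\AIMexpolyVMat{\IDpoly}}, \AddInvMat, \AIMFBgain)$. Theorem \ref{thm:stability_QMI} then produces matrices $({\expolyVMat{\IDpoly}}, \exAddMat, \FBgain)$ via the substitutions \eqref{eq:AIMexpolyVMat_to_expolyVMat}--\eqref{eq:AIMFBgain_to_FBgain} that fulfil the CMI condition {\itemMultiPERSCMIs} (resp.\ {\itemMultiPRSCMIs}). For item (i), the TI assumption lets me invoke Corollary \ref{thm:solution_to_Problem1}(i) directly. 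For item (ii), the final sentence of Theorem \ref{thm:stability_QMI} asserts that identical ${\AIMexpolyVMat{\IDpoly}}$ force the derived ${\expolyVMat{\IDpoly}}$ to be identical as well, so Corollary \ref{thm:solution_to_Problem1}(ii) applies and yields the quadratic-stability conclusion needed for the TV case. In both situations the stabilizing feedback gain is read off through \eqref{eq:AIMFBgain_to_FBgain}.

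Next I would reduce items (iii) and (iv) to the cases already handled. Their hypothesis is that the rank-one matrix $\approxdummyHMMat$ obtained from Algorithm \ref{alg1} via \eqref{eq:RankOneEigendecompositionRank1_approx}, together with the corresponding ${\AIMexpolyVMat{\IDpoly}}$ (or its identical version $\UNIAIMexpolyVMat$), satisfies {\itemMultiPERSSDPs} (resp.\ {\itemMultiPRSSDPs}). Since $\approxdummyHMMat$ is positive semidefinite and of rank exactly one by construction, the decomposition \eqref{eq:RankOneEigendecomposition} produces matrices $\AddInvMat$ and $\AIMFBgain$. Theorem \ref{thm:stability_QMIwithRankOne} then converts the satisfied constrained LMI into a solved QMI {\itemMultiPERSQMIs} (resp.\ {\itemMultiPRSQMIs}), at which point the argument for items (i) and (ii) closes the loop.

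Since this is a summary corollary, the proof is largely bookkeeping and I do not anticipate a substantive obstacle. The one subtlety worth flagging is that in (iii)--(iv) it is the exactly rank-one matrix $\approxdummyHMMat$ — not the possibly higher-rank iterate $\optdummyHMMat = {\dummyIteHMMat{\IDite}}$ — that must be substituted into Theorem \ref{thm:stability_QMIwithRankOne}, because the equivalence in that theorem rests on $\MyRank{\dummyHMMat}=1$. This matches the phrasing of (iii)--(iv), so no further analysis of the approximation residual $\optdummyHMMat-\approxdummyHMMat$ is needed inside this corollary; any quantitative control of that residual has already been supplied by Lemma \ref{thm:multiSDPfunc} and Theorem \ref{{thm:multiSDPmonodec}} during the iterative SDP itself.
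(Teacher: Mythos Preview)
Your proposal is correct and follows essentially the same route the paper takes: the corollary is stated as a summary result obtained by chaining Theorem \ref{thm:stability_QMIwithRankOne}, Theorem \ref{thm:stability_QMI}, and Corollary \ref{thm:solution_to_Problem1}, exactly as in your bottom-to-top traversal of Figure \ref{fig:overview}. Your handling of the identical-${\AIMexpolyVMat{\IDpoly}}$ case via the last sentence of Theorem \ref{thm:stability_QMI}, and your observation that it is the rank-one $\approxdummyHMMat$ (not $\optdummyHMMat$) that feeds into Theorem \ref{thm:stability_QMIwithRankOne}, are both on point.
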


\section{Demonstration with a numerical simulation}\label{sec_sim}

Section \ref{sec_variety_SMP} demonstrates the applicability of the proposed SMP systems.
The proposed method to design stabilizing controllers is evaluated through a numerical example.
Sections \ref{sec_sim_settings} and \ref{sec_sim_results} describe the simulation settings and results, respectively.

\subsection{Variety of second moment polytopes}\label{sec_variety_SMP}

The SMP systems in Definition \ref{def:TVSMP} can describe various classes of uncertain stochastic linear systems, which contain familiar existing systems.
We derive certain examples of such classes as follows.

\begin{theorem}[{\MyHighlight{Examples of TI SMP systems}}]\label{thm:TISMP_examples}	
	Suppose that ${\TVUnc{\MyT}}$ is TI (i.e., ${\TVUnc{\MyT}}=\TIUnc$) 
	and that $\DomTVUnc$ is given by
	\begin{align}
	\DomTVUnc
	&=
	\Big\{ 
	\TIUnc \in \mathbb{R}^{\DimTVUnc} 
	\Bigg|
	\forall \IDpoly, 
	{\El{\TIUnc}{\IDpoly}}
	\geq 0
	,
	\sum_{\IDpoly=1}^{\DimTVUnc} {\El{\TIUnc}{\IDpoly}}  = 1
	\Big\}
	. \label{eq:DomTVUnc_is_polytope}
	\end{align}
The following classes are represented as SMP systems.
	\begin{enumerate}
		\item\label{item_iid}
		i.i.d stochastic systems \cite{ItoTAC19}: 
		Suppose that ${\vecTIAB{\MyT}}={\NonArgvecAB{\MyT}}$ is i.i.d. with respect to $\MyT$ and independent of $\TIUnc$.
		The system \eqref{eq:def_sys} is TI SMP with
		\begin{align}
		\NUMpoly&=1
		,\label{eq:iid_NUMpoly}
		\\
		{\MapPolyW{\TIUnc}}&=1
		,\label{eq:iid_MapPolyW}
		\\
		{\polySMvecAB{1}}&= {\Expect[]{ {\NonArgvecAB{\MyT}} {\NonArgvecAB{\MyT}^{\MyTRANSPO}} }}
		.\label{eq:iid_polySMvecAB}
		\end{align}

		\item\label{item_det_polytope}
		Deterministic polytopic systems \cite{Oliveira99}:
		Suppose that ${\vecTIAB{\MyT}}$ and $\DomTVUnc$ are given by
		\begin{align}
		{\vecTIAB{\MyT}}
		& = \sum_{\IDpoly =1}^{\DimTVUnc} {\El{\TIUnc}{\IDpoly}} {\polyvecAB{\IDpoly}}
		,
		\end{align}
		where ${\polyvecAB{\IDpoly}}$ for $\IDpoly \in \{1,\dots, \DimTVUnc\}$ are deterministic vertices.
		Supposing that 
		the expectation
		{\;\;\;\;\;}
		 ${\CondExpectTV{   {\vecTVAB{\MyT}}{\vecTVAB{\MyT}^{\MyTRANSPO}}    }{\MyT}}$ can be replaced with the deterministic value $   {\vecTVAB{\MyT}}{\vecTVAB{\MyT}^{\MyTRANSPO}}   $,
		the system \eqref{eq:def_sys} is  TI SMP with
		\begin{align}
		\NUMpoly&=\DimTVUnc^2
		,\label{eq:setting1_NUMexpoly}
		\\
		{\MapPolyW{\TIUnc}}&= {\VEC{ {\TIUnc}  {\TIUnc^{\MyTRANSPO}}  }}
		,\label{eq:setting1_polyW}
		\\
		{\polySMvecAB{ \DimTVUnc(\IDbpoly-1) + \IDpoly  }}&
		=
		\frac{
		{\polyvecAB{\IDpoly}}   {\polyvecAB{\IDbpoly}}^{\MyTRANSPO}
		+
		{\polyvecAB{\IDbpoly}}   {\polyvecAB{\IDpoly}}^{\MyTRANSPO}
		}{ 2}
		.
		\end{align}

		\item\label{item_random_polytope}
		Random polytopic systems \cite{HosoeTAC18}:
		Suppose that ${\vecTIAB{\MyT}}$ is given by	
		\begin{align}
		{\vecTIAB{\MyT}}
		& = \sum_{\IDpoly =1}^{\DimTVUnc} {\El{\TIUnc}{\IDpoly}} {\polyRandvecAB{\IDpoly}}
		, \label{eq:def_random_polytope}
		\end{align}
		where the stochastic vertices ${\polyRandvecAB{\IDpoly}}$ for $\IDpoly \in \{1,\dots, \DimTVUnc\}$ are i.i.d. with respect to $\MyT$ and independent of $\TIUnc$.
		The system \eqref{eq:def_sys} is  TI SMP with \eqref{eq:setting1_NUMexpoly}, \eqref{eq:setting1_polyW}, and
		\begin{align}
		{\polySMvecAB{ \DimTVUnc(\IDbpoly-1) + \IDpoly  }}&
		=
		\frac{\Expect[]{ 
				{\polyRandvecAB{\IDpoly}}{\polyRandvecAB{\IDbpoly}}^{\MyTRANSPO}
				+
				{\polyRandvecAB{\IDbpoly}}{\polyRandvecAB{\IDpoly}}^{\MyTRANSPO}	
		}}{2}
		.\label{eq:setting1_polySMvecAB}
		\end{align}

		\item\label{item_uncMeanCov_NUMpoly}
		Systems with uncertain mean and covariance:
		Suppose that ${\vecTIAB{\MyT}}$ is given by
		\begin{align}
		{\CondExpectTI{\vecTIAB{\MyT}}}
		&
		= \sum_{\IDpoly =1}^{\DimTVUnc} {\El{\TIUnc}{\IDpoly}} {\polyMeanvecAB{\IDpoly}}
		,\label{eq:mean_UncMeanCovSys}
		\\
		{\CondCovTI[\big]{\vecTIAB{\MyT}}}
		&
		= \sum_{\IDpoly =1}^{\DimTVUnc} {\El{\TIUnc}{\IDpoly}}  {\polyCovvecAB{\IDpoly}} 
		,\label{eq:cov_UncMeanCovSys}
		\end{align}
		where ${\polyMeanvecAB{\IDpoly}}$ and ${\polyCovvecAB{\IDpoly}} \in {\SetSymMat{\DimX(\DimX+\DimU)}}$  for $\IDpoly \in \{1,\dots, \DimTVUnc\}$ are deterministic.
		The system \eqref{eq:def_sys} is  TI SMP with \eqref{eq:setting1_NUMexpoly}, \eqref{eq:setting1_polyW}, and
		\begin{align}
		{\polySMvecAB{  \DimTVUnc(\IDbpoly-1) + \IDpoly   }}
		&=\frac{
		{\polyMeanvecAB{\IDpoly}}{\polyMeanvecAB{\IDbpoly}}^{\MyTRANSPO} 
		+ 
		{\polyMeanvecAB{\IDpoly}}{\polyMeanvecAB{\IDbpoly}}^{\MyTRANSPO}
		}{2}
		 + {\polyCovvecAB{\IDpoly}}
		.
		\end{align}

	\end{enumerate}

\end{theorem}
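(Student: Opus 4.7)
The plan is to verify Definition \ref{def:TVSMP} directly for each of the four cases, by computing the conditional second moment ${\CondExpectTI[]{ \vecTIAB{\MyT} \vecTIAB{\MyT}^{\MyTRANSPO} }}$ and matching it to the prescribed $\sum_{\IDpoly=1}^{\NUMpoly} \El{\MapPolyW{\TIUnc}}{\IDpoly} {\polySMvecAB{\IDpoly}}$. I must also check that the proposed $\MapPolyW{\TIUnc}$ lands in $\DomPolytope$, i.e., has non-negative components summing to $1$.

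For case \ref{item_iid}, independence of ${\NonArgvecAB{\MyT}}$ and $\TIUnc$ collapses the conditional expectation to an unconditional one, so ${\CondExpectTI[]{\vecTIAB{\MyT}\vecTIAB{\MyT}^{\MyTRANSPO}}}=\Expect[]{{\NonArgvecAB{\MyT}}{\NonArgvecAB{\MyT}^{\MyTRANSPO}}}=\polySMvecAB{1}$, and the trivial weight $\MapPolyW{\TIUnc}=1$ is in $\DomPolytope$ by inspection. For cases \ref{item_det_polytope} and \ref{item_random_polytope}, I would expand $\vecTIAB{\MyT}\vecTIAB{\MyT}^{\MyTRANSPO}$ as a double sum over $(\IDpoly,\IDbpoly)$, then symmetrize the dyads $\polyvecAB{\IDpoly}\polyvecAB{\IDbpoly}^{\MyTRANSPO}$ (respectively $\Expect[]{\polyRandvecAB{\IDpoly}\polyRandvecAB{\IDbpoly}^{\MyTRANSPO}}$) with respect to swapping the two indices — this is legitimate because the scalar coefficients $\El{\TIUnc}{\IDpoly}\El{\TIUnc}{\IDbpoly}$ are themselves symmetric in $(\IDpoly,\IDbpoly)$, and symmetrization is required since each $\polySMvecAB{\IDpoly}$ must be symmetric. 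For case \ref{item_random_polytope}, the extra step is interchanging the expectation with the double sum, which is valid by independence of $\polyRandvecAB{\IDpoly}$ from $\TIUnc$.

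The common engine for cases \ref{item_det_polytope}--\ref{item_uncMeanCov_NUMpoly} is that $\MapPolyW{\TIUnc}=\VEC{\TIUnc\TIUnc^{\MyTRANSPO}}$ indexes the products $\El{\TIUnc}{\IDpoly}\El{\TIUnc}{\IDbpoly}$ through the column-major convention at position $\DimTVUnc(\IDbpoly-1)+\IDpoly$. To see $\MapPolyW{\TIUnc}\in\DomPolytope$, I would note that each component $\El{\TIUnc}{\IDpoly}\El{\TIUnc}{\IDbpoly}\ge 0$ and that $\sum_{\IDpoly,\IDbpoly}\El{\TIUnc}{\IDpoly}\El{\TIUnc}{\IDbpoly}=\bigl(\sum_{\IDpoly}\El{\TIUnc}{\IDpoly}\bigr)^{2}=1$, using \eqref{eq:DomTVUnc_is_polytope}.

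The main obstacle will be case \ref{item_uncMeanCov_NUMpoly}, which is the only case where a genuinely single-index sum (the covariance term $\sum_{\IDpoly}\El{\TIUnc}{\IDpoly}\polyCovvecAB{\IDpoly}$) must be reconciled with the double-index sum used by the mean-mean contribution. The key trick is the identity $\sum_{\IDpoly}\El{\TIUnc}{\IDpoly}\polyCovvecAB{\IDpoly}=\sum_{\IDpoly,\IDbpoly}\El{\TIUnc}{\IDpoly}\El{\TIUnc}{\IDbpoly}\polyCovvecAB{\IDpoly}$, available because $\sum_{\IDbpoly}\El{\TIUnc}{\IDbpoly}=1$. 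After this lift, the moment-variance decomposition $\CondExpectTI[]{\vecTIAB{\MyT}\vecTIAB{\MyT}^{\MyTRANSPO}}=\CondCovTI[]{\vecTIAB{\MyT}}+\CondExpectTI[]{\vecTIAB{\MyT}}\CondExpectTI[]{\vecTIAB{\MyT}}^{\MyTRANSPO}$ combined with \eqref{eq:mean_UncMeanCovSys}--\eqref{eq:cov_UncMeanCovSys} recovers the stated vertex formula, modulo a symmetrization of the mean dyads (and of the covariance assignment, which should be read symmetrically in $(\IDpoly,\IDbpoly)$ to keep each $\polySMvecAB{\cdot}$ in $\SetSymMat{\DimX(\DimX+\DimU)}$).
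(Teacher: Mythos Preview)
Your proposal is correct and follows essentially the same approach as the paper's proof: direct verification of Definition~\ref{def:TVSMP} in each case, with the double-sum expansion and symmetrization for \ref{item_det_polytope}--\ref{item_random_polytope}, the factorization $\sum_{\IDpoly,\IDbpoly}\El{\TIUnc}{\IDpoly}\El{\TIUnc}{\IDbpoly}=\bigl(\sum_{\IDpoly}\El{\TIUnc}{\IDpoly}\bigr)^{2}=1$ to place $\MapPolyW{\TIUnc}$ in $\DomPolytope$, and the ``multiply by $\sum_{\IDbpoly}\El{\TIUnc}{\IDbpoly}=1$'' lift of the covariance term in \ref{item_uncMeanCov_NUMpoly}. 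Your observation that the mean dyad in \ref{item_uncMeanCov_NUMpoly} should be symmetrized (and that the covariance assignment need only be symmetric as a matrix, not in the index pair) is a correct reading of what is needed, and in fact catches a typo in the stated vertex formula.
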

\begin{proof}
	The proof is described in Appendix \ref{pf:TISMP_examples}.
\end{proof}	

\begin{corollary}[{\MyHighlight{Examples of TV SMP systems}}]\label{thm:TVSMP_examples}	
Suppose that ${\TVUnc{\MyT}}$ is TV and that $\DomTVUnc$ is given by \eqref{eq:DomTVUnc_is_polytope}.
Then, the statements \ref{item_det_polytope}--\ref{item_uncMeanCov_NUMpoly} in Theorem \ref{thm:TVSMP_examples} hold if ${\TIUnc}$, ${\vecTIAB{\MyT}}$, and the TI property are replaced with ${\TVUnc{\MyT}}$, ${\vecTVAB{\MyT}}$, and the TV property, respectively.	
\end{corollary}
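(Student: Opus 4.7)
The plan is to mirror the proof of Theorem \ref{thm:TISMP_examples} for cases \ref{item_det_polytope}--\ref{item_uncMeanCov_NUMpoly}, substituting the time-varying ${\TVUnc{\MyT}}$ at each time instant in place of the constant ${\TIUnc}$. Case \ref{item_iid} is excluded from the corollary because the i.i.d.\ class imposes no uncertainty structure at all (its ${\MapPolyW{\TIUnc}}=1$ is already independent of ${\TIUnc}$), so the TI-versus-TV distinction is vacuous there.

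The first step is a reduction argument. By Assumption \ref{ass:StoParam} \ref{item_ass_independentPDF}, the joint conditional density of $({\NonArgvecAB{0}}, {\NonArgvecAB{1}}, \ldots)$ given the entire sequence $({\TVUnc{0}},{\TVUnc{1}},\ldots)$ factorizes as $\prod_{\MyT} {\CondPDF{\NonArgvecAB{\MyT}}{\TVUnc{\MyT}}}$, so the marginal conditional density of ${\NonArgvecAB{\MyT}}$ depends only on ${\TVUnc{\MyT}}$. Consequently, ${\CondExpectTV[\big]{{\vecTVAB{\MyT}}{\vecTVAB{\MyT}^{\MyTRANSPO}}}{\MyT}}$ is a function of ${\TVUnc{\MyT}}$ alone. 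The SMP identity \eqref{eq:SMP_AA_AB_BB} is required to hold for every $\MyT$ and every sequence $({\TVUnc{0}},{\TVUnc{1}},\ldots)$, so it suffices to verify it pointwise in the single variable ${\TVUnc{\MyT}} \in \DomTVUnc$. This reduces the TV verification to exactly the same algebraic identities as in the TI proof, with ${\TIUnc}$ replaced by ${\TVUnc{\MyT}}$.

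Next I would carry out the three case computations. For the deterministic polytopic case \ref{item_det_polytope}, expanding
\begin{align}
{\vecTVAB{\MyT}} {\vecTVAB{\MyT}^{\MyTRANSPO}}
=
\sum_{\IDpoly,\IDbpoly=1}^{\DimTVUnc}
{\El{\TVUnc{\MyT}}{\IDpoly}}
{\El{\TVUnc{\MyT}}{\IDbpoly}}
{\polyvecAB{\IDpoly}}
{\polyvecAB{\IDbpoly}}^{\MyTRANSPO}
\nonumber
\end{align}
and symmetrizing the $(\IDpoly,\IDbpoly)$ pair recovers the stated vertices. For the random polytopic case \ref{item_random_polytope}, the independence of ${\polyRandvecAB{\IDpoly}}$ from ${\TVUnc{\MyT}}$ allows the conditional expectation to be pushed inside the sum, yielding the same symmetrized form with ${\Expect[]{{\polyRandvecAB{\IDpoly}}{\polyRandvecAB{\IDbpoly}}^{\MyTRANSPO}}}$ in place of ${\polyvecAB{\IDpoly}}{\polyvecAB{\IDbpoly}}^{\MyTRANSPO}$. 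For the uncertain mean/covariance case \ref{item_uncMeanCov_NUMpoly}, I would use the identity ${\CondExpectTV[\big]{{\vecTVAB{\MyT}}{\vecTVAB{\MyT}^{\MyTRANSPO}}}{\MyT}} = {\CondCovTV[\big]{\vecTVAB{\MyT}}{\MyT}} + {\CondExpectTV{\vecTVAB{\MyT}}{\MyT}}\,{\CondExpectTV{\vecTVAB{\MyT}}{\MyT}^{\MyTRANSPO}}$ and insert the TV versions of \eqref{eq:mean_UncMeanCovSys} and \eqref{eq:cov_UncMeanCovSys}. In all three cases, ${\MapPolyW{\TVUnc{\MyT}}} = \VEC{{\TVUnc{\MyT}}{\TVUnc{\MyT}^{\MyTRANSPO}}}$ with $\NUMpoly = \DimTVUnc^{2}$, exactly as in the TI statements.

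Finally I would verify that the map ${\TVUnc{\MyT}} \mapsto \VEC{{\TVUnc{\MyT}}{\TVUnc{\MyT}^{\MyTRANSPO}}}$ has codomain inside $\DomPolytope \subset \mathbb{R}^{\DimTVUnc^{2}}$: every entry ${\El{\TVUnc{\MyT}}{\IDpoly}}{\El{\TVUnc{\MyT}}{\IDbpoly}}$ is nonnegative by \eqref{eq:DomTVUnc_is_polytope}, and their sum equals $\bigl(\sum_{\IDpoly=1}^{\DimTVUnc} {\El{\TVUnc{\MyT}}{\IDpoly}}\bigr)^{2} = 1$. The main (and essentially only) subtle point is the measurability/independence reduction at the start; the algebraic content of the three cases is routine once the one-step conditional moment is seen to depend only on ${\TVUnc{\MyT}}$. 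I do not expect any genuine obstacle beyond faithfully transcribing the TI computations with ${\TVUnc{\MyT}}$ in place of ${\TIUnc}$.
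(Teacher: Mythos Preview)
Your proposal is correct and follows essentially the same approach as the paper: the paper's proof is the one-liner ``By replacing ${\TIUnc}$ with ${\TVUnc{\MyT}}$, we can prove the statements in a manner similar to Theorem \ref{thm:TISMP_examples},'' and you have simply spelled out that replacement in detail, including the independence reduction that justifies working pointwise in ${\TVUnc{\MyT}}$. Your extra care about why the conditional second moment depends only on ${\TVUnc{\MyT}}$ (via Assumption \ref{ass:StoParam}\ref{item_ass_independentPDF}) and the explicit codomain check for ${\MapPolyW{\TVUnc{\MyT}}}$ are welcome elaborations, not a different route.
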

\begin{proof}
By replacing $\TIUnc$ with $\TVUnc{\MyT}$, we can prove the statements in a manner similar to Theorem \ref{thm:TVSMP_examples}. 	
\end{proof}	

While SMP systems represent random polytopes in Theorem \ref{thm:TISMP_examples} \ref{item_random_polytope}, 
the following statement shows a difference between the SMP systems and random polytopes.

\begin{proposition}[{\MyHighlight{Comparison with random polytopes}}]\label{thm:SMP_vs_random_polytope}
	There exists a TI SMP system with ${\anothervecTIAB{\MyT}}$ that is not equivalent to every random polytope with ${\vecTIAB{\MyT}}$ given in Theorem \ref{thm:TISMP_examples} \ref{item_random_polytope}, 
	in the sense that
	there is no setting of ${\polyRandvecAB{\IDpoly}}$ for $\IDpoly \in \{1,\dots, \DimTVUnc\}$ satisfying the following conditions:
	\begin{align}
	&
	\forall \anotherTIUnc \in \anotherDomTVUnc
	,
	\exists \TIUnc \in \DomTVUnc
	, \;
	{\anotherCondExpectTI{    {\anothervecTIAB{\MyT}} {\anothervecTIAB{\MyT}^{\MyTRANSPO}}        }}
	=
	{\CondExpectTI[\big]{   {\vecTIAB{\MyT}}{\vecTIAB{\MyT}^{\MyTRANSPO}}    }}
	,\label{eq:SMPvsRP_goalA} \\
	&
	\forall  \TIUnc \in \DomTVUnc
	,
	\exists  \anotherTIUnc \in \anotherDomTVUnc
	, \;
	{\anotherCondExpectTI{    {\anothervecTIAB{\MyT}} {\anothervecTIAB{\MyT}^{\MyTRANSPO}}        }}
	=
	{\CondExpectTI[\big]{   {\vecTIAB{\MyT}}{\vecTIAB{\MyT}^{\MyTRANSPO}}    }}
	,\label{eq:SMPvsRP_goalB} 
	\end{align}
	where the notation $\overline{(\bullet)}$ denotes variables different from $(\bullet)$.
\end{proposition}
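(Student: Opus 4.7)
My proof plan is to exhibit a concrete SMP system whose set of achievable second moments is disconnected, and then show that any random polytope necessarily produces a \emph{connected} set of second moments via continuity. The incompatibility of a connected set with a disconnected set of two distinct points rules out the joint satisfaction of \eqref{eq:SMPvsRP_goalA} and \eqref{eq:SMPvsRP_goalB}.

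\textbf{Step 1 (construction of the SMP system).} Take the simplest possible setting $\DimX=1$, $\DimU=0$, so that $\anothervecTIAB{\MyT}$ is a scalar. Let $\anotherDomTVUnc:=\{\anotherTIUnc^{(a)},\anotherTIUnc^{(b)}\}\subset\mathbb{R}$ be a two-point set, and let $\anothervecTIAB{\MyT}$ be deterministic given $\anotherTIUnc$, with $\anothervecTIAB{\MyT}=1$ when $\anotherTIUnc=\anotherTIUnc^{(a)}$ and $\anothervecTIAB{\MyT}=2$ when $\anotherTIUnc=\anotherTIUnc^{(b)}$. Choosing $\NUMpoly=2$, $\polySMvecAB{1}=1$, $\polySMvecAB{2}=4$, together with a map $\NonArgMapPolyW$ defined by $\MapPolyW{\anotherTIUnc^{(a)}}=(1,0)^{\MyTRANSPO}$ and $\MapPolyW{\anotherTIUnc^{(b)}}=(0,1)^{\MyTRANSPO}$, verifies Definition \ref{def:TVSMP}, and Assumption \ref{ass:StoParam} \ref{item_ass_independentPDF} holds trivially because $\anothervecTIAB{\MyT}$ is deterministic given $\anotherTIUnc$. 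Consequently, the image of the second-moment map is $\{\anotherCondExpectTI{\anothervecTIAB{\MyT}\anothervecTIAB{\MyT}^{\MyTRANSPO}}:\anotherTIUnc\in\anotherDomTVUnc\}=\{1,4\}$, a two-point (disconnected) subset of $\mathbb{R}$.

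\textbf{Step 2 (connectedness of every random polytope's image).} For an arbitrary random polytope as in Theorem \ref{thm:TISMP_examples} \ref{item_random_polytope}, with any $\DimTVUnc$ and any vertices $\polyRandvecAB{\IDpoly}$, the second moment
\begin{align}
{\CondExpectTI[\big]{\vecTIAB{\MyT}\vecTIAB{\MyT}^{\MyTRANSPO}}}
=\sum_{\IDpoly,\IDbpoly=1}^{\DimTVUnc}\El{\TIUnc}{\IDpoly}\El{\TIUnc}{\IDbpoly}\,\Expect[]{\polyRandvecAB{\IDpoly}\polyRandvecAB{\IDbpoly}^{\MyTRANSPO}}
\nonumber
\end{align}
is a polynomial (hence continuous) function of $\TIUnc$. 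Since $\DomTVUnc$ is the standard $\DimTVUnc$-simplex in \eqref{eq:DomTVUnc_is_polytope}, which is compact and connected, its image under this continuous map is a compact connected subset of $\mathbb{R}$.

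\textbf{Step 3 (conclusion by contradiction).} Suppose, for contradiction, that some choice of $\polyRandvecAB{\IDpoly}$ and $\DimTVUnc$ satisfied both \eqref{eq:SMPvsRP_goalA} and \eqref{eq:SMPvsRP_goalB}. Then the two sets of achievable second moments would be equal; in particular, the random polytope's image would coincide with $\{1,4\}$, contradicting the connectedness established in Step 2. Hence no such random polytope exists, proving Proposition \ref{thm:SMP_vs_random_polytope}.

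The only delicate point is confirming that all formal requirements of Definition \ref{def:TVSMP} and Assumption \ref{ass:StoParam} are genuinely satisfied by the deterministic two-point construction; this is routine because each conditional PDF $\CondPDF{\anothervecTIAB{\MyT}}{\anotherTIUnc}$ is simply a Dirac mass, giving Lebesgue-measurability and conditional independence for free. Beyond that, the argument reduces to the elementary topological observation that a continuous image of a connected set is connected, so no serious obstacle is expected.
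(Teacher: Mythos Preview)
Your argument is correct and takes a genuinely different route from the paper's proof. The paper constructs an SMP system with $\DimX=\DimU=1$, $\anotherDimTVUnc=2$, and crucially $\anotherDomTVUnc=\DomPolytope|_{\NUMpoly=2}$ (the full $2$-simplex, a \emph{connected} set), with mean zero and covariance depending affinely on $\anotherTIUnc$. Assuming a random-polytope representation exists, it evaluates the $(1,1)$-entry of the second moment at vertices and midpoints of the simplex to pin down all the cross-moments $\Expect[]{[\polyRandvecAB{\IDpoly}]_1[\polyRandvecAB{\IDbpoly}]_1}$, and then applies Cauchy--Schwarz to force the $(1,2)$-entry of the random polytope's second moment to be constant in $\TIUnc$, contradicting the fact that the corresponding entry of the SMP system genuinely varies with $\anotherTIUnc$.

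Your topological approach is shorter and more elementary: you exploit the freedom in Definition~\ref{def:TVSMP} to choose a disconnected $\anotherDomTVUnc$, whereupon connectedness of the simplex \eqref{eq:DomTVUnc_is_polytope} and continuity of the quadratic second-moment map immediately rule out equality of images. The price is that the conclusion is, in a sense, less informative: the paper exhibits an SMP system that cannot be realized as a random polytope even though \emph{both} uncertain-parameter domains are simplices, revealing an algebraic obstruction rather than a purely topological one; your construction would not distinguish the two classes if one restricted to connected $\anotherDomTVUnc$. One small caveat: a Dirac mass is not a Lebesgue density, so strictly speaking the system description around \eqref{eq:def_sys} asks you to replace the deterministic $\anothervecTIAB{\MyT}\in\{1,2\}$ by, e.g., centred normals with variances $1$ and $4$; the image $\{1,4\}$ and the rest of your argument are then unchanged.
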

\begin{proof}
	The proof is described in Appendix \ref{pf:SMP_vs_random_polytope}.
\end{proof}

\subsection{Simulation settings}\label{sec_sim_settings}

\newcommand{\simRangeOneMinTISto}{0.5}
\newcommand{\simRangeOneMaxTISto}{1.1}
\newcommand{\simRangeTwoMinTISto}{0.7}
\newcommand{\simRangeTwoMaxTISto}{1.3}

\newcommand{\simPoly}[2]{
	\begin{bmatrix}
		#1 & 0.2 + {\El{\TVSto{\MyT}}{2}} \\
		#2 {\El{\TVSto{\MyT}}{1}}  & 0.9
	\end{bmatrix}	
}
\newcommand{\simVecPoly}[2]{
	[
	#1 								,\;\;
	#2 {\El{\TVSto{\MyT}}{1}}  		,\;\; 
	0.2 + {\El{\TVSto{\MyT}}{2}} 	,\;\;
	0.9								,\;\; 
	1 								,\;\; 
	0
	]^{\MyTRANSPO}
}

We consider the SMP system \eqref{eq:def_CLsys} given by Theorem \ref{thm:TISMP_examples} \ref{item_uncMeanCov_NUMpoly} with 
$\DimTVUnc=3$, $\DimX=2$, $\DimU=1$, and the following settings:
\begin{align}
{\polyMeanvecAB{1}}
&=
[0.90,\; 0,\; 0.2,\;  0.9,\; 0,\; 1.0]^{\MyTRANSPO}
,\\
{\polyMeanvecAB{2}}
&=
[1.00,\; 0,\; 0.2,\; 0.9,\; 0,\; 0.8]^{\MyTRANSPO}
,\\
{\polyMeanvecAB{3}}
&=
[1.05,\; 0,\; 0.2,\; 0.9,\; 0,\; 1.2]^{\MyTRANSPO}
,\\
{\El{\polyCovvecAB{1}}{\IDEl,\IDbEl}}
&=
\begin{cases}
 0.06 & (\IDEl=\IDbEl) \\
-0.01 & (\IDEl\neq\IDbEl)
\end{cases}
,\\
{\El{\polyCovvecAB{2}}{\IDEl,\IDbEl}}
&=
\begin{cases}
0.02 & (\IDEl=\IDbEl) \\
0.01 & (\IDEl\neq\IDbEl)
\end{cases}
,\\
{\El{\polyCovvecAB{3}}{\IDEl,\IDbEl}}
&=
\begin{cases}
0.01 & (\IDEl=\IDbEl) \\
0    & (\IDEl\neq\IDbEl)
\end{cases}
.
\end{align}

Algorithm \ref{alg1} is employed to design a stabilizing controller of the SMP system \eqref{eq:def_CLsys} with the settings described above.
The parameters are set as $\exEMSrate=0.97$, $\smallmarginLMIs=0.1$ $\UBdummyHMMat=10$, and $\MultiSDPterminationVal=1.0 \times 10^{-8}$.
The SDP \eqref{eq:def_multiSDP} was numerically solved using the MATLAB solver, mincx, 
where the initial guess of $\IDite$-th iteration for $\IDite \geq 2$ is set to a solution of $(\IDite-1)$-th iteration.

\subsection{Simulation results}\label{sec_sim_results}

\newcommand{\simFolder}{results_UncMeanCovSysA_20210830}

The SDP \eqref{eq:def_multiSDP} was iteratively solved so that its solution $\optdummyHMMat={\dummyIteHMMat{\IDite}}$ is approximated by a rank-one matrix $\approxdummyHMMat$.
Recall that $\MyRank{\dummyIteHMMat{\IDite}}=1$ holds if the maximum eigenvalue ${\iEig{1}{\dummyIteHMMat{\IDite}}}$ is positive and the absolute sum of the other eigenvalues
${\rankoneErr{\dummyIteHMMat{\IDite}}}
=
\sum_{\IDEl=2}^{\DimX(\DimX+\DimU)}
|{\iEig{\IDEl}{\dummyIteHMMat{\IDite}}}|$ is zero.
These values for each iteration are shown in Fig. \ref{fig:eigenvalues}.
We see that ${\rankoneErr{\dummyIteHMMat{\IDite}}}$ is sufficiently small for $\IDite\geq 2$, indicating that 
the rank-one matrix $\approxdummyHMMat$ is successfully obtained.
Indeed, the corresponding feedback gain $\FBgain$ guarantees the exponential robust MS stability because the stability conditions {\itemMultiPERSCMIs} was satisfied.

\newcommand{\MyFigSIZEeigen}{0.55}
\begin{figure}[!t]	
	\centering
	\begin{minipage}[b]{\MyFigSIZEeigen\linewidth}
		\centering	
		\begin{tikzpicture}[scale=0.9]%
		\begin{axis}[axis y line=none, axis x line=none
		,xtick=\empty, ytick=\empty 
		,xmin=-10,xmax=10,ymin=-10,ymax=10
		,width=3.1 in,height=1.8 in
		]
		
		
		\node at (0,0.2) { 
		\includegraphics[width=0.7\linewidth]{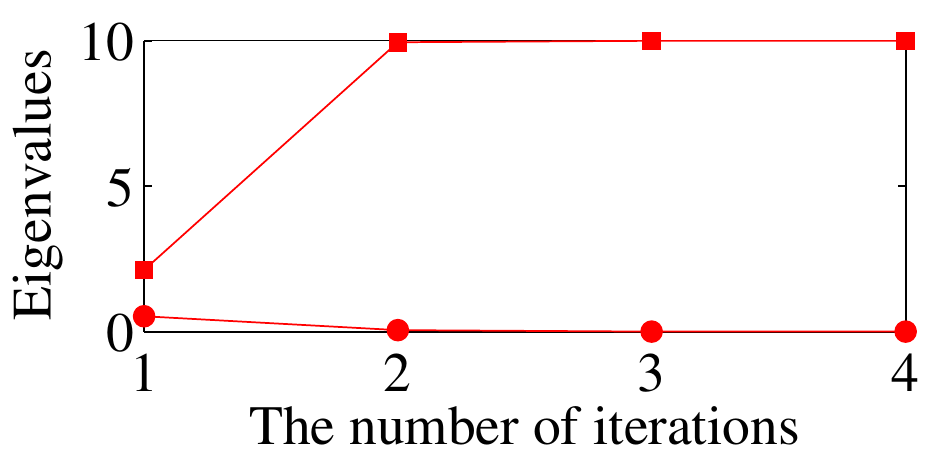}
		};
		\fill[fill=white] (-10,-10) rectangle (-8.2,10);
		\node at (-8.8,1.2) {\rotatebox{90}{
			Value 
		}};
		\fill[fill=white] (-10,-7.4) rectangle (10,-11);
		\node at (1.2,-8.3) {The number $\IDite$ of iterations};		
		\end{axis}			
		\end{tikzpicture}	
	\end{minipage}%
\caption{Results of solving the iterative SDP.
	The symbols $\blacksquare$ and $\bullet$ denote the maximum eigenvalue ${\iEig{1}{\dummyIteHMMat{\IDite}}}$ and the absolute sum of the other eigenvalues
	${\rankoneErr{\dummyIteHMMat{\IDite}}}
	=
	\sum_{\IDEl=2}^{\DimX(\DimX+\DimU)}
	|{\iEig{\IDEl}{\dummyIteHMMat{\IDite}}}|$, respectively, where ${\iEig{1}{\dummyIteHMMat{\IDite}}} + {\rankoneErr{\dummyIteHMMat{\IDite}}} \leq \UBdummyHMMat = 10$.
}
\label{fig:eigenvalues}
\end{figure}

Next, we simulated the SMP system for various uncertain parameters ${\TIUnc}$ that were randomly generated.
For each $\MyT$, ${\vecTIAB{\MyT}}$ was randomly sampled from ${\CondPDF{\NonArgvecAB{\MyT}}{\TIUnc}}$ that was set to the normal distribution obeying \eqref{eq:mean_UncMeanCovSys} and \eqref{eq:cov_UncMeanCovSys}.
The simulation was performed in both the cases without control ${\Input{\MyT}}=0$ and by employing the designed controller for the initial state ${\State{0}}=[1,1]^{\MyTRANSPO}$.  
These results with the different parameters are plotted in Fig. \ref{fig:TITV_1st_state_forA}.
It can be observed that the systems without control did not converge to the origin in Fig. \ref{fig:TITV_1st_state_forA} \subref{fig:NoControl}.
In contrast, the designed controller stabilized the systems successfully as shown in Fig. \ref{fig:TITV_1st_state_forA} \subref{fig:WithControl}.
These results indicate that the proposed controller design method can be successfully applied to SMP systems.

\newcommand{\MyFigSIZEresults}{0.52}
\begin{figure}[!t]	
	\vspace*{+0.07in}
	\centering
	\begin{minipage}[b]{\MyFigSIZEresults\linewidth}
		\centering	
		\begin{tikzpicture}[scale=0.9]%
		\begin{axis}[axis y line=none, axis x line=none
		,xtick=\empty, ytick=\empty 
		,xmin=-11,xmax=10,ymin=-10,ymax=10
		,width=3.9 in,height=2.2 in
		]
		
		
		\node at (0,0) { 
			\includegraphics[width=0.91\linewidth]{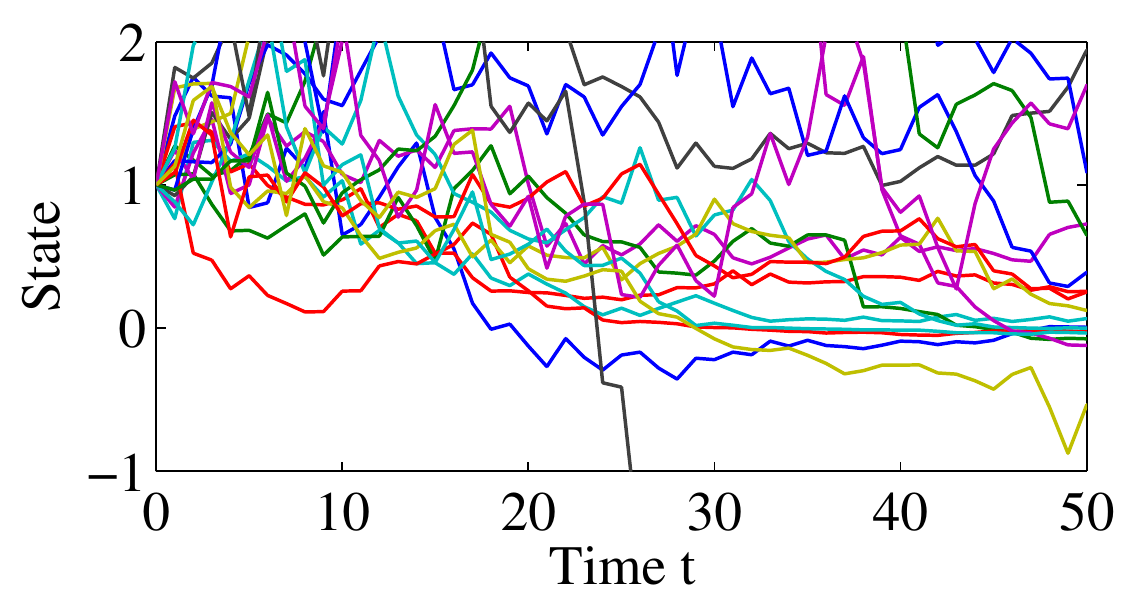} 
		};
		\fill[fill=white] (-10,-6) rectangle (-8.2,6);
		\node at (-9.4,1.0) {\rotatebox{90}{
				\shortstack{First component $\El{\State{\MyT}}{1}$ \\ of the state}
		}};
		\fill[fill=white] (-6,-7.8) rectangle (6,-11);
		\node at (1.2,-9.2) {Time step $\MyT$};		 
		\end{axis}			
		\end{tikzpicture}	
		\subcaption{Without control}
		\label{fig:NoControl}	
	\end{minipage}%
	\VisibleColTwo{\\}%
	\begin{minipage}[b]{\MyFigSIZEresults\linewidth}
		\centering	
		\begin{tikzpicture}[scale=0.9]%
		\begin{axis}[axis y line=none, axis x line=none
		,xtick=\empty, ytick=\empty 
		,xmin=-11,xmax=10,ymin=-10,ymax=10
		,width=3.9 in,height=2.2in
		]
		
		
		\node at (0,0) { 
			\includegraphics[width=0.91\linewidth]{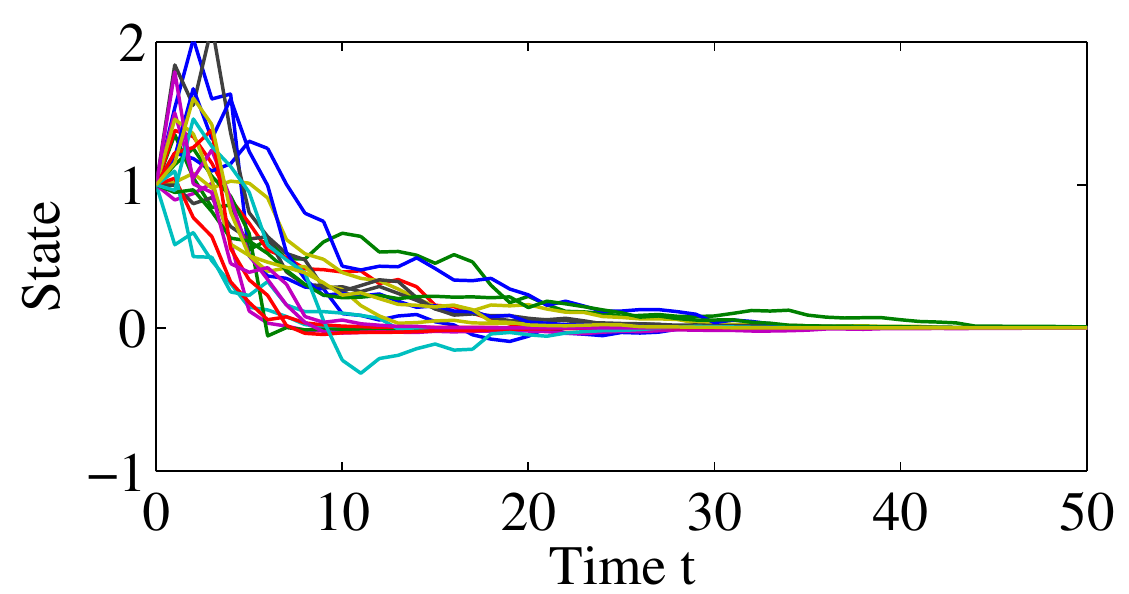} 
		};
		\fill[fill=white] (-10,-6) rectangle (-8.2,6);
		\node at (-9.4,1.0) {\rotatebox{90}{
				\shortstack{First component $\El{\State{\MyT}}{1}$ \\ of the state}
		}};
		\fill[fill=white] (-6,-7.8) rectangle (6,-11);
		\node at (1.2,-9.2) {Time step $\MyT$};		 
		\end{axis}			
		\end{tikzpicture}	
		\subcaption{With the designed controller}
		\label{fig:WithControl}			
	\end{minipage}%
	\caption{Control results without control (a) and by employing the designed feedback controller (b).}
	\label{fig:TITV_1st_state_forA}
\end{figure}

\section{Conclusion}\label{sec_conclusion}

This paper presented the concept of general uncertain stochastic systems called SMP systems.
The goal of this study was to establish fundamental theory to guarantee stability of the SMP systems. 
A central idea to address this goal is to develop expanded systems with key properties as shown in Section \ref{sec_expandedsys}.
We derived conditions for which the SMP systems are (exponentially) robustly MS stable in Section \ref{sec_stability}.
Based on the derived conditions, a method to design stabilizing feedback gains of linear controllers was proposed.
Section \ref{sec_control_QMIs} showed that feedback gains are solutions to QMIs (into which CMIs are transformed).  
In Section \ref{sec_control_SDPs}, we relaxed the non-convex QMIs as an iterative convex SDP that is easy to solve.

SMP systems can widely represent various uncertain stochastic systems such as i.i.d. systems, random polytope systems, and systems with uncertain mean and covariance, as shown in Theorem \ref{thm:TISMP_examples}.
The effectiveness of the proposed design method was confirmed via a numerical simulation.
In future work, various control problems such as optimal control and output feedback control will be considered for SMP systems.

\appendix

\section{Proof of Theorem \ref{thm:sys_to_exsys}} \label{pf:sys_to_exsys}

	For brevity of notation,  ${\CLDriftMat{\MyT}}$ denotes the closed-loop matrix in \eqref{eq:def_CLsys}:
	\begin{align}
	{\CLDriftMat{\MyT}} 
	&:= 
	{\DriftMat{\MyT}} - {\InMat{\MyT}} \FBgain 
	. 
	\end{align}
	We show the statement \eqref{eq:exState_is_2ndM_State} using mathematical induction after proving the following relation:
	\begin{align}
	\forall \MyT,\quad
	{\exCLMat{\FBgain}}
	&
	=
	{\CondExpectTV[]{ {\CLDriftMat{\MyT}} \otimes {\CLDriftMat{\MyT}}  }{\MyT}}
	.  \label{eq:pf_exCLMat}
	\end{align}

	First, we show \eqref{eq:pf_exCLMat}.
	For any $\tempAmat \in \mathbb{R}^{\DimX \times \DimX}$ and $\tempBmat \in \mathbb{R}^{\DimX \times \DimU}$,
	let us define $\tempABvec:=\VEC{ [\tempAmat,\tempBmat] }$ and ${\tempSMABvec{\IDEl}{\IDbEl}}\in \mathbb{R}^{\DimX \times \DimX}$ as follows:
	\begin{align}
	\tempABvec \tempABvec^{\MyTRANSPO}
	&
	=\VEC{ [\tempAmat,\tempBmat] }\VEC{ [\tempAmat,\tempBmat] }^{\MyTRANSPO}
	\VisibleColTwo{\nonumber\\&}
	=:
	\begin{bmatrix}
	{\tempSMABvec{1}{1}} & \cdots & {\tempSMABvec{1}{\DimX+\DimU}} \\
	\vdots & \ddots & \vdots \\
	{\tempSMABvec{\DimX+\DimU}{1}} & \cdots & {\tempSMABvec{\DimX+\DimU}{\DimX+\DimU}} \\
	\end{bmatrix}
	.
	\end{align}
	For any $\IDcEl \in \{1,\dots, \DimU\}$ and $\IDbEl \in \{1,\dots, \DimX\}$, we obtain 
	\begin{align}
	{\El{ \tempAmat \otimes \tempBmat }{\vectorWildCard, \DimU(\IDbEl-1) +\IDcEl   }}
	=\begin{bmatrix}
	{\El{\tempAmat}{1,\IDbEl}} {\El{\tempBmat}{\vectorWildCard,\IDcEl}} \\
	\vdots \\
	{\El{\tempAmat}{\DimX,\IDbEl}} {\El{\tempBmat}{\vectorWildCard,\IDcEl}}
	\end{bmatrix}
	&
	=\VEC{   {\El{\tempBmat}{\vectorWildCard,\IDcEl}} {\El{\tempAmat}{\vectorWildCard,\IDbEl}^{\MyTRANSPO}}   }
	\VisibleColTwo{\nonumber\\&}
	=
	\VEC{ {\tempSMABvec{\DimX+\IDcEl}{\IDbEl}}   }
	. \label{eq:relation_M_to_AB}
	\end{align}
	In a manner similar to this relation, the following results are obtained 
	for $\IDEl , \IDbEl \in \{1,\dots, \DimX\}$ and $\IDcEl, \IDdEl \in \{1,\dots, \DimU\}$:
	\begin{align}
	{\El{ \tempAmat \otimes \tempAmat }{\vectorWildCard, \DimX(\IDbEl-1) +\IDEl   }}
	&=
	\VEC{ {\tempSMABvec{\IDEl}{\IDbEl}}   }
	,\label{eq:relation_M_to_AA}
	\\
	{\El{ \tempBmat \otimes \tempAmat }{\vectorWildCard, \DimX(\IDdEl-1) +\IDEl   }}
	&=
	\VEC{ {\tempSMABvec{\IDEl}{\DimX+\IDdEl}}   }
	,\label{eq:relation_M_to_BA}
	\\
	{\El{ \tempBmat \otimes \tempBmat }{\vectorWildCard, \DimU(\IDdEl-1) +\IDcEl   }}
	&=
	\VEC{ {\tempSMABvec{\DimX+\IDcEl}{\DimX+\IDdEl}}   }
	.\label{eq:relation_M_to_BB}
	\end{align}
	These properties \eqref{eq:relation_M_to_AB}--\eqref{eq:relation_M_to_BB} hold even if we replace 
	$\tempAmat$, $\tempBmat$, $\tempABvec$, and ${\tempSMABvec{\IDEl}{\IDbEl}}$ with
	${\DriftMat{\MyT}}$, ${\InMat{\MyT}}$, ${\vecTVAB{\MyT}}$, and 
	$\sum_{\IDpoly=1}^{\NUMpoly}
	{\El{\MapPolyW{\TVUnc{\MyT}}}{\IDpoly}}
	{\blockpolySMvecAB{\IDpoly}{\IDEl       }{\IDbEl}} $ and take the expectations ${\CondExpectTV{  \dots }{\MyT}}$.
	Then, using \eqref{eq:SMP_AA_AB_BB} and \eqref{eq:def_expolyAAMat}, we obtain
	\begin{align}
	&	{\CondExpectTV[\big]{  
			{\El{		 {\DriftMat{\MyT}}  \otimes  {\DriftMat{\MyT}}    }{\vectorWildCard, \DimX(\IDbEl-1) +\IDEl   }}
		}{\MyT}}
	\VisibleColTwo{\nonumber\\&}
	=
	\VEC[\Big]{
	\sum_{\IDpoly=1}^{\NUMpoly}
	{\El{\MapPolyW{\TVUnc{\MyT}}}{\IDpoly}}
	{\blockpolySMvecAB{\IDpoly}{\IDEl       }{\IDbEl}}   }
	\VisibleColTwo{\nonumber\\&}
	=
	\sum_{\IDpoly=1}^{\NUMpoly}
	{\El{\MapPolyW{\TVUnc{\MyT}}}{\IDpoly}}
	{\El{\expolyAAMat{\IDpoly}}{\vectorWildCard, \DimX(\IDbEl-1) +\IDEl   }}	
	.\label{eq:expolyMat_intuition_AA}
	\end{align}
In the same way,
${\CondExpectTV{    {\DriftMat{\MyT}}  \otimes  {\InMat{\MyT}} }{\MyT}}$,
${\CondExpectTV{   {\InMat{\MyT}}   \otimes  {\DriftMat{\MyT}}  }{\MyT}}$,
${\CondExpectTV{   {\InMat{\MyT}}  \otimes  {\InMat{\MyT}}    }{\MyT}}$ are represented using 
${\expolyABMat{\IDpoly}}$, 
${\expolyBAMat{\IDpoly}}$, and 
${\expolyBBMat{\IDpoly}}$,  respectively.
	Note that $(\NotationMat_{1} \otimes \NotationMat_{2})(\NotationMat_{3} \otimes \NotationMat_{4})=
	\NotationMat_{1}\NotationMat_{3} \otimes \NotationMat_{2}\NotationMat_{4}
	$ and $(\NotationMat_{5} + \NotationMat_{6})\otimes \NotationMat_{7}= \NotationMat_{5} \otimes \NotationMat_{7} + \NotationMat_{6} \otimes \NotationMat_{7}$ hold for the appropriate dimensions of these matrices \cite[Section 3.2.9]{Gentle07}.
	Using these properties and \eqref{eq:expolyMat_intuition_AA},  we obtain \eqref{eq:pf_exCLMat} as follows:
	\begin{align}
	&
	{\CondExpectTV{  
			{\CLDriftMat{\MyT}} \otimes {\CLDriftMat{\MyT}}   
		}{\MyT}}
	\nonumber\\&
	=
	{\CondExpectTV{     {\DriftMat{\MyT}}  \otimes  {\DriftMat{\MyT}}      }{\MyT}}
	-{\CondExpectTV{     {\DriftMat{\MyT}}  \otimes ( {\InMat{\MyT}} \FBgain )   }{\MyT}}
	\nonumber\\&\quad
	-{\CondExpectTV{    (  {\InMat{\MyT}} \FBgain )  \otimes  {\DriftMat{\MyT}}     }{\MyT}}
	\VisibleColTwo{\nonumber\\&\quad}
	+{\CondExpectTV{   ( {\InMat{\MyT}} \FBgain )  \otimes ( {\InMat{\MyT}} \FBgain )   }{\MyT}}
	\nonumber\\&
	=
	{\CondExpectTV{  {\DriftMat{\MyT}}  \otimes  {\DriftMat{\MyT}}    }{\MyT}}
	\VisibleColTwo{\nonumber\\&\quad}
	-{\CondExpectTV{  {\DriftMat{\MyT}}  \otimes  {\InMat{\MyT}}  }{\MyT}}  ({\Identity{\DimX}} \otimes \FBgain)
	\nonumber\\&\quad
	-{\CondExpectTV{   {\InMat{\MyT}}   \otimes  {\DriftMat{\MyT}}    }{\MyT}} ( \FBgain \otimes {\Identity{\DimX}} )
	\VisibleColTwo{\nonumber\\&\quad}
	+{\CondExpectTV{ {\InMat{\MyT}}  \otimes  {\InMat{\MyT}}    }{\MyT}} ( \FBgain \otimes \FBgain )
	\nonumber\\&
	=
	\sum_{\IDpoly=1}^{\NUMpoly}
	{\El{\MapPolyW{\TVUnc{\MyT}}}{\IDpoly}}
	\Big(
	{\expolyAAMat{\IDpoly}}
	-{\expolyABMat{\IDpoly}} ({\Identity{\DimX}} \otimes \FBgain)
	\VisibleColTwo{\nonumber\\&\quad\quad\quad\quad\quad\quad\quad}
	-{\expolyBAMat{\IDpoly}} (\FBgain \otimes {\Identity{\DimX}})
	+{\expolyBBMat{\IDpoly}} (\FBgain \otimes \FBgain)
	\Big)
	\nonumber\\&
	=
	\sum_{\IDpoly=1}^{\NUMpoly}
	{\El{\MapPolyW{\TVUnc{\MyT}}}{\IDpoly}}
	{\expolyCLMat{\IDpoly}{\FBgain}}
	\nonumber\\&
	=
	\sum_{\IDpoly=1}^{\NUMpoly}
	{\El{\exTVUnc{\MyT}}{\IDpoly}}
	{\expolyCLMat{\IDpoly}{\FBgain}}
	\nonumber\\&
	=
	{\exCLMat{\FBgain}}
	. 
	\end{align}

	Next, we show the statement \eqref{eq:exState_is_2ndM_State} by using \eqref{eq:pf_exCLMat} and mathematical induction.
	For each $\MycT \in \{0,1,2,\dots\}$, supposing that \eqref{eq:exState_is_2ndM_State} holds for $\MyT=\MycT$, the following relation implies \eqref{eq:exState_is_2ndM_State} for $\MyT=\MycT+1$:
	\begin{align}
	\VisibleColTwo{&}
	{\CondExpectAllTV{  
			\VECH{
				{\State{\MycT+1}}{\State{\MycT+1}^{\MyTRANSPO}} 
			}
		}{\MybT}{\MycT+1}}
	\VisibleColTwo{\nonumber\\}
	&
	=
	{\CondExpectAllTV{   
			\EliMat
			\VEC{
				{\State{\MycT+1}}{\State{\MycT+1}^{\MyTRANSPO} }
			}
		}{\MybT}{\MycT+1}}
	\nonumber\\&
	=
	\EliMat
	{\CondExpectAllTV{   
			\VEC{     {\CLDriftMat{\MycT}}   {\State{\MycT}}{\State{\MycT}^{\MyTRANSPO} } {\CLDriftMat{\MycT}}^{\MyTRANSPO} }
		}{\MybT}{\MycT+1}}
	\nonumber\\&
	=
	\EliMat
	{\CondExpectAllTV{ 
			(  {\CLDriftMat{\MycT}} \otimes {\CLDriftMat{\MycT}}   )
			\VEC{{\State{\MycT}}{\State{\MycT}^{\MyTRANSPO}} }
		}{\MybT}{\MycT+1}}
	\nonumber\\&
	=
	\EliMat
	{\CondExpectAllTV{   
			(  {\CLDriftMat{\MycT}} \otimes {\CLDriftMat{\MycT}}   )
			\DupMat
			\VECH{{\State{\MycT}}{\State{\MycT}^{\MyTRANSPO}} }
		}{\MybT}{\MycT+1}}
	\nonumber\\&
	=
	\Elimi{
		{\CondExpectTV{ 
				(  {\CLDriftMat{\MycT}} \otimes {\CLDriftMat{\MycT}}   )
			}{\MycT}}
	}
	{\CondExpectAllTV{ 
			\VECH{{\State{\MycT}}{\State{\MycT}^{\MyTRANSPO}} }
		}{\MybT}{\MycT}}
	\nonumber\\&
	=
	\Elimi{
		{\CondExpectTV{   
				{\CLDriftMat{\MycT}} \otimes {\CLDriftMat{\MycT}}   
			}{\MycT}}
	}
	{\exState{\MycT}}
	\nonumber\\&
	=
	\Elimi{
		{\DETAILexCLMat{\FBgain}{\MycT}}
	}
	{\exState{\MycT}}
	\nonumber\\&
	=
	{\exState{\MycT+1}}
	, \label{eq:VECH_to_VEC_pf}
	\end{align}
	where the third equality is derived using the relation $\VEC{\NotationMat_{1}  \NotationMat_{2} \NotationMat_{3}}=(\NotationMat_{3}^{\MyTRANSPO} \otimes \NotationMat_{1}) \VEC{\NotationMat_{2}}  $ \cite[Section 3.2.9]{Gentle07} with $\NotationMat_{1}=\NotationMat_{3}^{\MyTRANSPO}={\CLDriftMat{\MycT}}$ and $\NotationMat_{2}={\State{\MycT}}{\State{\MycT}^{\MyTRANSPO}}$.
	The fifth, seventh, and last equalities are derived based on the independence between ${\CLDriftMat{\MycT}}$ and ${\State{\MycT}}$, \eqref{eq:pf_exCLMat}, and \eqref{eq:def_exsys}, respectively.
	Since \eqref{eq:exState_is_2ndM_State} was assumed for $\MyT=0$, 
	\eqref{eq:exState_is_2ndM_State} holds for all $\MyT \in \{0,1,2,\dots\}$ by mathematical induction.
	This completes the proof.

\section{Proof of Theorem \ref{thm:Equivarence_RMS}} \label{pf:Equivarence_RMS}

We first show the sufficiency.
For any ${\State{0}} \in \mathbb{R}^{\DimX}$ and any ${\TVUnc{0}},{\TVUnc{1}},\dots  \in \DomTVUnc $,
we choose ${\exState{0}}$ and ${\exTVUnc{0}},{\exTVUnc{1}},\dots $ that satisfy 
\eqref{eq:ass_ini_condition_State} and
\eqref{eq:ass_ini_condition_TISto}. 
Theorem \ref{thm:sys_to_exsys} implies that,
for $\IDEl \in \{1,\dots, \DimX\}$, there exists $\IDbEl$ satisfying
\begin{align}
{\CondExpectAllTV{ 
		\El{\State{\MyT}}{\IDEl}^{2}
	}{\MybT}{\MyT}}
=
{\El{\exState{\MyT}}{\IDbEl}}
\leq
\|{\exState{\MyT}}\|
.
\end{align}	
If the expanded system {\eqref{eq:def_exsys}}
is robustly stable, we obtain the following asymptotic convergence for all $\IDEl \in \{1, \dots, \DimX \}$: 
\begin{align}
\lim_{\MyT \to \infty}
{\CondExpectAllTV[\big]{  
		\El{\State{\MyT}}{\IDEl}^{2}
	}{\MybT}{\MyT}}
\leq
\lim_{\MyT \to \infty}
\|{\exState{\MyT}}\|
=0
.
\end{align}			
Therefore, the SMP system {\eqref{eq:def_CLsys}} is robustly MS stable.

Next, we show the necessity.
For any ${\exTVUnc{0}},{\exTVUnc{1}},\dots  \in \ImagePolyW$, there exist ${\TVUnc{0}},{\TVUnc{1}},\dots  \in \DomTVUnc $ satisfying \eqref{eq:ass_ini_condition_TISto} because of the definition \eqref{eq:def_DompolyW} of $\ImagePolyW$.
We consider ${\exState{0}} \in \mathbb{R}^{\DimexX}$ such that \eqref{eq:ass_ini_condition_State} holds for some ${\State{0}}$.
Then, Theorem \ref{thm:sys_to_exsys} gives \eqref{eq:exState_is_2ndM_State} for all $\MyT$.
For any $\IDEl$ and $\IDbEl$, 
applying the Cauchy-Schwarz inequality \cite[Section 5.5]{Gray09} to ${\El{\State{\MyT}}{\IDEl}} {\El{\State{\MyT}}{\IDbEl}}$ yields
\begin{align}
&
|
{\CondExpectAllTV{ 
		{\El{\State{\MyT}}{\IDEl}} {\El{\State{\MyT}}{\IDbEl}}
	}{\MybT}{\MyT}}
|^{2}
\leq	
{\CondExpectAllTV{ 
		{\El{\State{\MyT}}{\IDEl}}^{2}
	}{\MybT}{\MyT}}
{\CondExpectAllTV{ 
		{\El{\State{\MyT}}{\IDbEl}}^{2}
	}{\MybT}{\MyT}}
. \label{eq:CauchySchwarz_State_to_exState}
\end{align}	
If the SMP system {\eqref{eq:def_CLsys}} is robustly MS stable, this property indicates
\begin{align}
&
\lim_{\MyT \to \infty}
{\CondExpectAllTV[\big]{ 
		{\El{\State{\MyT}}{\IDEl}} {\El{\State{\MyT}}{\IDbEl}}
	}{\MybT}{\MyT}}
=0
.
\end{align}	
Combining this convergence with \eqref{eq:exState_is_2ndM_State} gives	
\begin{align}
&
\forall {\exTVUnc{0}},{\exTVUnc{1}} , {\dots} \in \ImagePolyW
,
\lim_{\MyT \to \infty}
{\exState{\MyT}} =  0
, \label{eq:Convergence_exState_pf} 
\end{align}		
for any ${\exState{0}}$ that satisfies ${\exState{0}}=\VECH{ {\State{0}} {\State{0}^{\MyTRANSPO}}  }$ for some ${\State{0}}$.
Therefore, we finally show that 
\eqref{eq:Convergence_exState_pf}  holds for all ${\exState{0}} \in \mathbb{R}^{\DimexX}$.
Let us define ${\PFStateIni{\IDEl}{\IDbEl}} \in \mathbb{R}^{\DimX}$ such that ${\El{\PFStateIni{\IDEl}{\IDbEl}}{\IDEl}}={\El{\PFStateIni{\IDEl}{\IDbEl}}{\IDbEl}}=1$ and the other components are zero.
Let us define ${\vhxx{\IDEl}{\IDbEl}}:=\VECH{ {\PFStateIni{\IDEl}{\IDbEl}}  {\PFStateIni{\IDEl}{\IDbEl}^{\MyTRANSPO}} }$.
The standard basis on $\mathbb{R}^{\DimexX}$ consists of the unit vectors
${\vhxx{\IDEl}{\IDEl}}$ and $({\vhxx{\IDEl}{\IDbEl}} -{\vhxx{\IDEl}{\IDEl}} - {\vhxx{\IDbEl}{\IDbEl}})$
for  all $\IDEl \in \{1,\dots, \DimX\}$ and $\IDbEl \in \{\IDEl+1,\dots, \DimX\} $.
Then, for any ${\exState{0}} \in  \mathbb{R}^{\DimexX}$, there exist constants ${\weightPFexState{\IDEl}{\IDbEl}} \in \mathbb{R}$ satisfying
\begin{align}
{\exState{0}}
=
\sum_{\IDEl=1}^{\DimX}
\Big(
{\weightPFexState{\IDEl}{\IDEl}}
{\vhxx{\IDEl}{\IDEl}}
+
\sum_{\IDbEl=\IDEl+1}^{\DimX} {\weightPFexState{\IDEl}{\IDbEl}}
({\vhxx{\IDEl}{\IDbEl}} -{\vhxx{\IDEl}{\IDEl}} - {\vhxx{\IDbEl}{\IDbEl}})
\Big)
.\label{eq:def_weightPFexState}
\end{align}
Then, for any ${\exState{0}}  \in  \mathbb{R}^{\DimexX}$, we obtain
\begin{align}
{\exState{\MyT}} 
&= \Big(
\prod_{\MybT=0}^{\MyT-1}\Elimi{\exCLMat{\FBgain}} 
\Big)
{\exState{0}}
\VisibleColTwo{\nonumber\\&}
=
\sum_{\IDEl=1}^{\DimX}
\Big(
{\weightPFexState{\IDEl}{\IDEl}}
{\SpetialexStatevhxx{\MyT}{\IDEl}{\IDEl}}
+
\sum_{\IDbEl=\IDEl+1}^{\DimX} 
{\weightPFexState{\IDEl}{\IDbEl}}
\big(
{\SpetialexStatevhxx{\MyT}{\IDEl}{\IDbEl}}
-
{\SpetialexStatevhxx{\MyT}{\IDEl}{\IDEl}} 
-
{\SpetialexStatevhxx{\MyT}{\IDbEl}{\IDbEl}}
\big)
\Big)
,\label{eq:any_exState_linearCombi}
\end{align}	
where
${\SpetialexStatevhxx{\MyT}{\IDEl}{\IDbEl}}
:=(\prod_{\MybT=0}^{\MyT-1}\Elimi{\exCLMat{\FBgain}} )
{\SpetialexStatevhxx{0}{\IDEl}{\IDbEl}}
$
denotes ${\exState{\MyT}} $ starting from ${\exState{0}}= {\vhxx{\IDEl}{\IDbEl}}$. 
This implies \eqref{eq:Convergence_exState_pf}  for all ${\exState{0}}  \in  \mathbb{R}^{\DimexX}$ because  \eqref{eq:Convergence_exState_pf} with ${ {\exState{0}}= {\vhxx{\IDEl}{\IDbEl}}=\VECH{ {\PFStateIni{\IDEl}{\IDbEl}}  {\PFStateIni{\IDEl}{\IDbEl}^{\MyTRANSPO}} }  }$ holds for any $\IDEl$ and $\IDbEl$.
This completes the proof.

\section{Proof of Theorem \ref{thm:Equivarence_ERMS}} \label{pf:Equivarence_ERMS}

To employ \eqref{eq:exState_is_2ndM_State} for all $\MyT \in \{0,1,2,\dots\}$,
we consider the case that 
\eqref{eq:ass_ini_condition_State} and
\eqref{eq:ass_ini_condition_TISto} hold, which are derived later.
By using \eqref{eq:exState_is_2ndM_State} and the Cauchy-Schwarz inequality \cite[Lecture 1]{Bhatia09}:
\begin{align}
\Big(
\sum_{\IDEl=1}^{\DimX} \El{ \coefASchwarz }{\IDEl} \El{ \coefBSchwarz }{\IDEl}
\Big)^{2}
\leq 
\Big(
\sum_{\IDEl=1}^{\DimX} \El{ \coefASchwarz }{\IDEl}^{2}
\Big)
\Big(
\sum_{\IDEl=1}^{\DimX} \El{ \coefBSchwarz }{\IDEl}^{2}
\Big)
,
\end{align}
with the settings  
$\El{ \coefASchwarz }{\IDEl} = 1 $
and $\El{ \coefBSchwarz }{\IDEl} =
{\CondExpectAllTV{ 
		\El{\State{\MyT}}{\IDEl}^{2}
	}{\MybT}{\MyT}}
$,
we obtain	
\begin{align}
\DimX \|{\exState{\MyT}}\|^{2}
=
\DimX
\sum_{\IDbEl=1}^{\DimexX}
\El{\exState{\MyT}}{\IDbEl}^{2}
&
\geq
\Big(
\sum_{\IDEl=1}^{\DimX} 1
\Big)
\sum_{\IDEl=1}^{\DimX}
{\CondExpectAllTV[\big]{ 
		\El{\State{\MyT}}{\IDEl}^{2}
	}{\MybT}{\MyT}}^{2}
\VisibleColTwo{\nonumber \\&}
\geq
\Big(
\sum_{\IDEl=1}^{\DimX}
{\CondExpectAllTV[\big]{  
		\El{\State{\MyT}}{\IDEl}^{2}
	}{\MybT}{\MyT}}
\Big)^{2}
\VisibleColTwo{\nonumber \\&}
=
{\CondExpectAllTV[\big]{   
		\|{\State{\MyT}}\|^{2}
	}{\MybT}{\MyT}}^{2}
. \label{eq:ineq_exState_to_State1}
\end{align}	
Meanwhile,
using
\eqref{eq:CauchySchwarz_State_to_exState} yields
\begin{align}	
\|{\exState{\MyT}}\|^{2}
=
\sum_{\IDEl=1}^{\DimexX}
\El{\exState{\MyT}}{\IDEl}^{2}
&
\leq
\sum_{\IDEl=1}^{\DimX}
\sum_{\IDbEl=1}^{\DimX}
{\CondExpectAllTV[\big]{  
		\El{\State{\MyT}}{\IDEl}
		\El{\State{\MyT}}{\IDbEl}
	}{\MybT}{\MyT}}^{2}
\nonumber \\ &
\leq
\sum_{\IDEl=1}^{\DimX}
\sum_{\IDbEl=1}^{\DimX}
{\CondExpectAllTV[\big]{  
		{\El{\State{\MyT}}{\IDEl}}^{2}
	}{\MybT}{\MyT}}
{\CondExpectAllTV[\big]{  
		{\El{\State{\MyT}}{\IDbEl}}^{2}
	}{\MybT}{\MyT}}
\nonumber \\ &
=
\sum_{\IDEl=1}^{\DimX}
{\CondExpectAllTV[\big]{  
		{\El{\State{\MyT}}{\IDEl}}^{2}
	}{\MybT}{\MyT}}
\sum_{\IDbEl=1}^{\DimX}
{\CondExpectAllTV[\big]{  
		{\El{\State{\MyT}}{\IDbEl}}^{2}
	}{\MybT}{\MyT}}
\nonumber \\&
=
{\CondExpectAllTV[\big]{  
		\|    {\State{\MyT}}    \|^{2}
	}{\MybT}{\MyT}}^{2}
.	\label{eq:ineq_exState_to_State2}
\end{align}

Now, we show the sufficiency. 
The conditions \eqref{eq:ineq_exState_to_State1} and \eqref{eq:ineq_exState_to_State2} can be used  for any ${\State{0}} \in \mathbb{R}^{\DimX}$ and any ${\TVUnc{0}},{\TVUnc{1}},\dots  \in \DomTVUnc $ because there exist ${\exState{0}}$ and ${\exTVUnc{0}},{\exTVUnc{1}},\dots$ satisfying \eqref{eq:ass_ini_condition_State} and
\eqref{eq:ass_ini_condition_TISto}.
If the expanded system \eqref{eq:def_exsys} is exponentially robustly stable, substituting \eqref{eq:ineq_exState_to_State2} with $\MyT=0$ and \eqref{eq:ineq_exState_to_State1} into \eqref{eq:def_ERstable} yields
\begin{align}
\DimX^{-1/2}
{\CondExpectAllTV[\big]{   		\|{\State{\MyT}}\|^{2}	}{\MybT}{\MyT}}
\leq
\|    {\exState{\MyT}}    \|
\leq 
\exEMScoef  \|{\exState{0}}\| \exEMSrate^{\MyT}
\leq
\exEMScoef  \|\State{0}\|^{2} \exEMSrate^{\MyT}
.
\end{align}
Taking the root of this inequality gives
\begin{align}
\sqrt{
	{\CondExpectAllTV[]{
			\|{\State{\MyT}}\|^{2}
		}{\MybT}{\MyT}}
}
&\leq 
\DimX^{1/4} {\exEMScoef}^{1/2} 
\|{\State{0}}\| {\exEMSrate}^{\MyT/2} 
.
\end{align}	
Therefore, substituting \eqref{eq:exEMSrate_to_EMSrate} and $\EMScoef\geq \DimX^{1/4} {\exEMScoef}^{1/2} $ yields the exponential robust MS stability in \eqref{eq:def_ERMSstable}.

Next, we show the necessity.
For any ${\exTVUnc{0}},{\exTVUnc{1}},\dots  \in \ImagePolyW$, there exist ${\TVUnc{0}},{\TVUnc{1}},\dots  \in \DomTVUnc $ satisfying \eqref{eq:ass_ini_condition_TISto} because of the definition \eqref{eq:def_DompolyW} of $\ImagePolyW$.
We consider ${\exState{0}} \in \mathbb{R}^{\DimexX}$ such that \eqref{eq:ass_ini_condition_State} holds for some ${\State{0}}$.
Then, \eqref{eq:ineq_exState_to_State1} and \eqref{eq:ineq_exState_to_State2} can be used.
If the SMP system {\eqref{eq:def_CLsys}} is exponentially robustly MS stable,
substituting \eqref{eq:ineq_exState_to_State1} with $\MyT=0$ and \eqref{eq:ineq_exState_to_State2} into the squared version of \eqref{eq:def_ERMSstable} yields
\begin{align}
\|{\exState{\MyT}}\|
\leq
{\CondExpectAllTV[\big]{  \|    {\State{\MyT}}    \|^{2}   }{\MybT}{\MyT}}
\leq 
\EMScoef^{2}  \|{\State{0}}\|^{2} \EMSrate^{2\MyT}
\leq 
\EMScoef^{2} \DimX^{1/2} \|{\exState{0}}\| \EMSrate^{2\MyT}
.\label{eq:def_ERMSstable_pf1}
\end{align}

Meanwhile, we recall \eqref{eq:def_weightPFexState}.
There exist ${\weightPFexState{\IDEl}{\IDbEl}}$ satisfying \eqref{eq:def_weightPFexState} and $| {\weightPFexState{\IDEl}{\IDbEl}} | \leq \| {\exState{0}} \|$ because ${\weightPFexState{\IDEl}{\IDbEl}}$ are the coefficients of the unit vectors.
In addition, $\| {\vhxx{\IDEl}{\IDbEl}}  \| \leq \sqrt{3}$ holds because of its definition.
For any ${\exState{0}}\in \mathbb{R}^{\DimexX}$ and any ${\exTVUnc{0}},{\exTVUnc{1}},\dots  \in \ImagePolyW$,
combining \eqref{eq:any_exState_linearCombi} with these bounds and \eqref{eq:def_ERMSstable_pf1} yields
\begin{align}
\|{\exState{\MyT}}  \|
&\leq 
\sum_{\IDEl=1}^{\DimX}
\Big(
|{\weightPFexState{\IDEl}{\IDEl}}|
\|{\SpetialexStatevhxx{\MyT}{\IDEl}{\IDEl}}\|
+
\sum_{\IDbEl=\IDEl+1}^{\DimX} 
|{\weightPFexState{\IDEl}{\IDbEl}}|
\big(
\|{\SpetialexStatevhxx{\MyT}{\IDEl}{\IDbEl}}\|
\VisibleColTwo{\nonumber\\&\qquad\qquad}
+
\|{\SpetialexStatevhxx{\MyT}{\IDEl}{\IDEl}} \|
+
\|{\SpetialexStatevhxx{\MyT}{\IDbEl}{\IDbEl}}\|
\big)
\Big)
\nonumber\\
&\leq 
\EMScoef^{2}  \DimX^{1/2}\EMSrate^{2\MyT}
\sum_{\IDEl=1}^{\DimX}
\Big(
|{\weightPFexState{\IDEl}{\IDEl}}|
 \| {\vhxx{\IDEl}{\IDEl}} \| 
+
\sum_{\IDbEl=\IDEl+1}^{\DimX} 
|{\weightPFexState{\IDEl}{\IDbEl}}|
\big(
\| {\vhxx{\IDEl}{\IDbEl}} \| 
\VisibleColTwo{\nonumber\\&\qquad\qquad\qquad\qquad}
+
\| {\vhxx{\IDEl}{\IDEl}} \|
+
\| {\vhxx{\IDbEl}{\IDbEl}} \|
\big)
\Big)
\nonumber\\
&\leq 
\EMScoef^{2}  \DimX^{1/2}\EMSrate^{2\MyT}
\sum_{\IDEl=1}^{\DimX}
\Big(
\sqrt{3}\| {\exState{0}} \|
+
\sum_{\IDbEl=\IDEl+1}^{\DimX} 
3\sqrt{3}\| {\exState{0}} \|
\Big)
\nonumber\\
&\leq 
\sqrt{3} 
\Big(
\DimX + 3\frac{\DimX(\DimX-1)}{2}
\Big)
\EMScoef^{2}  \DimX^{1/2} \| {\exState{0}} \| \EMSrate^{2\MyT}
.
\end{align}	
Therefore, substituting \eqref{eq:exEMSrate_to_EMSrate} and 
$\exEMScoef \geq (\sqrt{3}/2)(3 \DimX  -1)\DimX^{3/2}\EMScoef^{2} $ 
yields \eqref{eq:def_ERstable}.
This completes the proof.

\section{Proof of Theorem \ref{thm:MSstab_TISMP}} \label{pf:MSstab_TISMP}

We prove the statement based on the result in \cite[Theorems 1 and 2]{Oliveira99} and in a manner similar to \cite[Theorems 1 and 2]{HosoeTAC18}.
Using ${\expolyVMat{\IDpoly}} \succ 0$,  $\exAddMat$, and $\FBgain$ satisfying \eqref{eq:expanded_ERS}, we obtain the following result \cite[Theorems 1 and 2]{Oliveira99}:
\begin{align}
\exEMSrate^{2}
\LYAPexpolyVMat({\exTVUnc{\MyT}})
&\succeq
\Big(
\sum_{\IDpoly=1}^{\NUMpoly}
{\El{\exTVUnc{\MyT}}{\IDpoly}}
 \Elimi{ {\expolyCLMat{\IDpoly}{\FBgain}}  }^{\MyTRANSPO} 
\Big)
\VisibleColTwo{\nonumber\\&\qquad \times}
\LYAPexpolyVMat({\exTVUnc{\MyT}})
\Big(
\sum_{\IDpoly=1}^{\NUMpoly}
{\El{\exTVUnc{\MyT}}{\IDpoly}}
 \Elimi{ {\expolyCLMat{\IDpoly}{\FBgain}}  } 
\Big)
, \label{eq:Lyap_ineq1pre}
\end{align}	
where $\LYAPexpolyVMat({\exTVUnc{\MyT}}):=\sum_{\IDpoly=1}^{\NUMpoly}
{\El{\exTVUnc{\MyT}}{\IDpoly}}
{\expolyVMat{\IDpoly}} \succ 0$.
Because 
$\sum_{\IDpoly=1}^{\NUMpoly}
{\El{\exTVUnc{\MyT}}{\IDpoly}}
\Elimi{ {\expolyCLMat{\IDpoly}{\FBgain}}  } 
=
\Elimi{
\sum_{\IDpoly=1}^{\NUMpoly}
{\El{\exTVUnc{\MyT}}{\IDpoly}}
{\expolyCLMat{\IDpoly}{\FBgain}}  } 
=
\Elimi{\exCLMat{\FBgain}}
$
holds, we obtain
\begin{align}
\exEMSrate^{2}
\LYAPexpolyVMat({\exTVUnc{\MyT}})
\succeq
\Elimi{\exCLMat{\FBgain}}^{\MyTRANSPO} 
\LYAPexpolyVMat({\exTVUnc{\MyT}})
\Elimi{\exCLMat{\FBgain}}
. \label{eq:Lyap_ineq1}
\end{align}
Since the expanded system is supposed to be TI, $\LYAPexpolyVMat({\exTIUnc}) \succ 0$ is constant.
Iterating \eqref{eq:Lyap_ineq1} with multiplying by ${\exState{0}}$ on the left and right yields
\begin{align}
\exEMSrate^{2\MyT}
{\exState{0}^{\MyTRANSPO}} \LYAPexpolyVMat({\exTIUnc}) {\exState{0}}
\geq  
{\exState{\MyT}^{\MyTRANSPO}} \LYAPexpolyVMat({\exTIUnc}) {\exState{\MyT}}
. \label{eq:Lyap_ineq2}
\end{align}	
This implies the exponential stability for all $\exTIUnc \in \DomPolytope$ \cite[Theorems 1 and 2]{HosoeTAC18}.
Because $\ImagePolyW \subseteq \DomPolytope$ holds, the expanded system \eqref{eq:def_exsys} is exponentially robustly stable. 
This completes the proof.

\section{Proof of Theorem \ref{thm:RS_TIexpanded}} \label{pf:RS_TIexpanded}

The proof is similar to that of Theorem \ref{thm:MSstab_TISMP}.
Because of the finiteness of $\IDpoly\in \{1,\dots, \NUMpoly \}$, 
for any ${\expolyVMat{\IDpoly}} \succ 0$,  $\exAddMat$, and $\FBgain$ that satisfy \eqref{eq:expanded_RS},
there exists $\CMIstrictCoef>0$ such that
\begin{align}
\forall \IDpoly 
,\;\;
{\CMIterms{\IDpoly}{\expolyVMat{\IDpoly}}{\exAddMat}{\FBgain}{1}}
-  
\begin{bmatrix}
\CMIstrictCoef \Identity{\DimexX} & 0 \\ 0 & 0
\end{bmatrix}
 \succeq 0
.\label{eq:expanded_RS_another}
\end{align}	
The following result is obtained in a manner similar to the proof of Theorem \ref{thm:MSstab_TISMP}:
\begin{align}
\LYAPexpolyVMat({\exTVUnc{\MyT}}) 
\succeq
\Elimi{\exCLMat{\FBgain}}^{\MyTRANSPO} 
\LYAPexpolyVMat({\exTVUnc{\MyT}})
\Elimi{\exCLMat{\FBgain}}
+ \CMIstrictCoef \Identity{\DimexX}
. \label{eq:Lyap_ineq1_another}
\end{align}
This condition with  ${\exTVUnc{\MyT}} = \exTIUnc$ indicates that, for any $\exTIUnc \in \ImagePolyW \subseteq \DomPolytope$, the absolute values of all the eigenvalues of $\Elimi{\exTICLMat{\FBgain}}$ is less than one \cite[Lemma 1]{Oliveira99}.
This implies the robust stability of the expanded system \eqref{eq:def_exsys} and thus completes the proof.

\section{Proof of Theorem \ref{thm:ERS_RS_TVexpanded}} \label{pf:ERS_RS_TVexpanded}

Considering quadratic stability, the proof is similar to those of Theorems \ref{thm:MSstab_TISMP} and \ref{thm:RS_TIexpanded}. 
For any sequence ${\exTVUnc{0}},{\exTVUnc{1}},\dots  \in \ImagePolyW$,
we obtain \eqref{eq:Lyap_ineq1}, \eqref{eq:Lyap_ineq2}, and \eqref{eq:Lyap_ineq1_another} with $\LYAPexpolyVMat({\exTVUnc{\MyT}})=\UNIexpolyVMat$.
Therefore, the exponential robust stability of the TV expanded system holds.
Next, let us denote $\Elimi{{\DETAILexCLMat{\FBgain}{\MyT}}}$ by ${\BRIEFexCLMat{\MyT}}$ for brevity of notation.
Iterating \eqref{eq:Lyap_ineq1_another} in a manner similar to \cite[Lemma 2.2]{Koning82} yields 
\begin{align}
\UNIexpolyVMat
&\succeq
{\BRIEFexCLMat{0}}^{\MyTRANSPO} 
\UNIexpolyVMat
{\BRIEFexCLMat{0}}
+ \CMIstrictCoef \Identity{\DimexX}
\nonumber\\&
\succeq
{\BRIEFexCLMat{0}}^{\MyTRANSPO} 
{\BRIEFexCLMat{1}}^{\MyTRANSPO} 
\UNIexpolyVMat
{\BRIEFexCLMat{1}}
{\BRIEFexCLMat{0}}
+ \CMIstrictCoef 
(
\Identity{\DimexX}
+
{\BRIEFexCLMat{0}}^{\MyTRANSPO} 
{\BRIEFexCLMat{0}}
)
\nonumber\\&
\succeq
{\BRIEFexCLMat{0:\MyT+1}}^{\MyTRANSPO} 
\UNIexpolyVMat
{\BRIEFexCLMat{0:\MyT+1}}
+
\CMIstrictCoef
\Big(
\Identity{\DimexX}
+
\sum_{\MybT=0}^{\MyT}
{\BRIEFexCLMat{0:\MybT}}^{\MyTRANSPO} 
{\BRIEFexCLMat{0:\MybT}}
\Big)
, \label{eq:Lyap_ineq1_another_ite}
\end{align}
where ${\BRIEFexCLMat{0:\MyT}}:={\BRIEFexCLMat{\MyT}} {\BRIEFexCLMat{\MyT-1}} \dots {\BRIEFexCLMat{0}}$.
The positive definiteness of $\UNIexpolyVMat$ in \eqref{eq:Lyap_ineq1_another_ite} leads to the boundedness of $\sum_{\MybT=0}^{\MyT}
{\BRIEFexCLMat{0:\MybT}}^{\MyTRANSPO} 
{\BRIEFexCLMat{0:\MybT}} \succeq 0$ that implies 
$\lim_{\MyT \to \infty}
{\BRIEFexCLMat{0:\MyT}}^{\MyTRANSPO} 
{\BRIEFexCLMat{0:\MyT}} =0$.
Because of ${\exState{\MyT+1}}={\BRIEFexCLMat{0:\MyT}}{\exState{0}}$, the robust stability of the expanded system holds.
This  completes the proof.

\section{Proof of Theorem \ref{thm:stability_QMI}} \label{pf:stability_QMI}

We first show that {\itemMultiPERSQMIs} implies {\itemMultiPERSCMIs}, that is, 
satisfying the QMIs \eqref{eq:expanded_ERS_QMI} implies that the CMIs \eqref{eq:expanded_ERS} hold.
Let us suppose that the QMIs \eqref{eq:expanded_ERS_QMI} are satisfied.

First, we prove the following relation \cite{Oliveira99} for any $\NotationMat \in \mathbb{R}^{\DimANotation \times \DimANotation}$ and any $\NotationSymMat \succ 0 \in {\SetSymMat{\DimANotation}}$:
\begin{align}
\NotationMat + \NotationMat^{\MyTRANSPO}  - \NotationSymMat \succeq 0
\Rightarrow
\exists \NotationMat^{-1}
.\label{eq:regularity_property}
\end{align}	
The condition  $\NotationMat + \NotationMat^{\MyTRANSPO}  - \NotationSymMat \succeq 0$ with any $\NotationVec \neq 0 \in \mathbb{R}^{\DimANotation}$ implies
\begin{align}
\NotationVec^{\MyTRANSPO}  ( \NotationMat  + \NotationMat^{\MyTRANSPO} ) \NotationVec
=
2 \NotationVec^{\MyTRANSPO}   \NotationMat  \NotationVec
\geq 
\NotationVec^{\MyTRANSPO}  \NotationSymMat \NotationVec > 0
.
\end{align}
Therefore, any $\NotationVec \neq 0$ satisfies $\NotationMat  \NotationVec \neq 0$, implying that  $\NotationMat$ is nonsingular, that is, \eqref{eq:regularity_property} holds.

Next, any $\NotationMat \in \mathbb{R}^{\DimANotation \times \DimANotation}$ satisfies the following relations \cite[Lemma 4.4]{Magnus80}:
\begin{align}	
\DupMat \EliMat ( \NotationMat \otimes \NotationMat ) \DupMat 
&= ( \NotationMat \otimes \NotationMat ) \DupMat 
,\label{eq:MagnusProperties_ExcludeDL}
\\
\det(  \EliMat ( \NotationMat \otimes \NotationMat ) \DupMat  ) &= \det(  \NotationMat )^{\DimANotation+1}
,\label{eq:MagnusProperties_Det}
\\
\exists \NotationMat^{-1}
\Rightarrow
( \EliMat ( \NotationMat \otimes \NotationMat ) \DupMat )^{-1}
&=
\EliMat ( \NotationMat^{-1} \otimes \NotationMat^{-1} ) \DupMat
.\label{eq:MagnusProperties_Inverse}
\end{align}	
Because \eqref{eq:expanded_ERS_QMI}, \eqref{eq:regularity_property}, and the positive definiteness of ${\AIMexpolyVMat{\IDpoly}}$ hold,
$\Elimi{ \AddInvMat \otimes \AddInvMat }$ is nonsingular.
Substituting $\Elimi{ \AddInvMat \otimes \AddInvMat }$ into \eqref{eq:MagnusProperties_Det} gives $\det(  \AddInvMat )\neq 0$, that is, $\AddInvMat$ is nonsingular.

Next, we use the settings \eqref{eq:AIMexpolyVMat_to_expolyVMat}, \eqref{eq:AddInvMat_to_exAddMat}, and \eqref{eq:AIMFBgain_to_FBgain}.	
By using the relation $(\NotationMat_{1} \otimes \NotationMat_{2})(\NotationMat_{3} \otimes \NotationMat_{4})=
\NotationMat_{1}\NotationMat_{3} \otimes \NotationMat_{2}\NotationMat_{4}
$ 
\cite[Section 3.2.9]{Gentle07}, \eqref{eq:MagnusProperties_ExcludeDL}, and \eqref{eq:MagnusProperties_Inverse}, we transform ${\QMILowLeftblock{\IDpoly}{\AIMFBgain,\AddInvMat}}$
as follows:
\begin{align}
&
\exAddMat^{\MyTRANSPO}
{\QMILowLeftblock{\IDpoly}{\AIMFBgain,\AddInvMat}}
\exAddMat
\nonumber\\&
=
\exAddMat^{\MyTRANSPO}
{\QMILowLeftblock{\IDpoly}{\AIMFBgain,\AddInvMat}}
\Elimi{ \AddInvMat \otimes \AddInvMat }^{-1}
\nonumber\\&
=
\exAddMat^{\MyTRANSPO}
\EliMat
\big( 
{\expolyAAMat{\IDpoly}}  (\AddInvMat \otimes \AddInvMat)
-{\expolyABMat{\IDpoly}} (\AddInvMat \otimes \AIMFBgain)
\VisibleColTwo{\nonumber\\&\quad}
-{\expolyBAMat{\IDpoly}} (\AIMFBgain \otimes \AddInvMat)
+{\expolyBBMat{\IDpoly}} (\AIMFBgain \otimes \AIMFBgain)
\big)
\DupMat
\EliMat ( \AddInvMat^{-1} \otimes \AddInvMat^{-1} ) \DupMat
\nonumber\\&
=
\exAddMat^{\MyTRANSPO}
\EliMat
\big( 
{\expolyAAMat{\IDpoly}}  (\AddInvMat \otimes \AddInvMat)
-{\expolyABMat{\IDpoly}} (\AddInvMat \otimes \AIMFBgain)
\VisibleColTwo{\nonumber\\&\quad}
-{\expolyBAMat{\IDpoly}} (\AIMFBgain \otimes \AddInvMat)
+{\expolyBBMat{\IDpoly}} (\AIMFBgain \otimes \AIMFBgain)
\big)
( \AddInvMat^{-1} \otimes \AddInvMat^{-1} ) \DupMat
\nonumber\\&
=
\exAddMat^{\MyTRANSPO}
\EliMat
\big( 
{\expolyAAMat{\IDpoly}}  (\Identity{\DimX} \otimes \Identity{\DimX})
-{\expolyABMat{\IDpoly}} (\Identity{\DimX} \otimes \FBgain)
\VisibleColTwo{\nonumber\\&\quad}
-{\expolyBAMat{\IDpoly}} (\FBgain \otimes \Identity{\DimX})
+{\expolyBBMat{\IDpoly}} (\FBgain \otimes \FBgain)
\big) \DupMat
\nonumber\\&
=
\exAddMat^{\MyTRANSPO} 
{\Elimi{\expolyCLMat{\IDpoly}{\FBgain}}}
.
\end{align}
Therefore, if the QMIs \eqref{eq:expanded_ERS_QMI} hold, the CMIs \eqref{eq:expanded_ERS} are satisfied as follows:
\begin{align}
\VisibleColTwo{&}
{\CMIterms{\IDpoly}{\expolyVMat{\IDpoly}}{\exAddMat}{\FBgain}{\exEMSrate}}
\VisibleColTwo{\nonumber\\}
&
=
\begin{bmatrix}
\exEMSrate^{2}  
\exAddMat^{\MyTRANSPO}{\AIMexpolyVMat{\IDpoly}}\exAddMat
&   
{\Elimi{\expolyCLMat{\IDpoly}{\FBgain}}}^{\MyTRANSPO} 
\exAddMat
\\
\exAddMat^{\MyTRANSPO} 
{\Elimi{\expolyCLMat{\IDpoly}{\FBgain}}}
&  
\exAddMat^{\MyTRANSPO} + \exAddMat  -  \exAddMat^{\MyTRANSPO}{\AIMexpolyVMat{\IDpoly}}\exAddMat
\end{bmatrix}
\nonumber\\&
=
\begin{bmatrix}
\exAddMat^{\MyTRANSPO} & 0 \\
0 & \exAddMat^{\MyTRANSPO} 
\end{bmatrix}
{\QMIterms{\IDpoly}{\AIMexpolyVMat{\IDpoly}}{\AddInvMat}{\AIMFBgain}{\exEMSrate}}
\begin{bmatrix}
\exAddMat & 0 \\
0 & \exAddMat
\end{bmatrix}
\nonumber\\&
\succeq 0
, \label{eq:QMI_to_CMI_pf}
\end{align}	
where the above block diagonal matrix using $\exAddMat$ is nonsingular. 
We can prove that {\itemMultiPRSQMIs} implies {\itemMultiPRSCMIs} in a manner similar to this proof, by taking the strict inequality in \eqref{eq:expanded_ERS_QMI} and \eqref{eq:QMI_to_CMI_pf} and substituting $\exEMSrate=1$ into them.
In addition, \eqref{eq:AIMexpolyVMat_to_expolyVMat} indicates that  ${\expolyVMat{\IDpoly}}$ are identical if  ${\AIMexpolyVMat{\IDpoly}}$ are identical.
This complete the proof.

\section{Proof of Proposition \ref{thm:dummyHMfunc}} \label{pf:dummyHMfunc}

Combining \eqref{eq:RankOneEigendecomposition} with \eqref{eq:def2_BlockHMMat} yields
	\begin{align}
	\dummyHMMat
	=\begin{bmatrix}
	\VEC{\AddInvMat} \\	 \VEC{\AIMFBgain} 
	\end{bmatrix}
	\begin{bmatrix}
	\VEC{\AddInvMat} \\	 \VEC{\AIMFBgain} 
	\end{bmatrix}^{\MyTRANSPO}
	=
	\begin{bmatrix}
	{\BlockHMMat{1}{1}} & \cdots & {\BlockHMMat{1}{2\DimX}} \\
	\vdots & \ddots & \vdots \\
	{\BlockHMMat{2\DimX}{1}} & \cdots & {\BlockHMMat{2\DimX}{2\DimX}} \\
	\end{bmatrix}
	.
	\end{align}
In a manner similar to \eqref{eq:relation_M_to_AB}, for any $\IDEl \in \{1,\dots, \DimX\}$ and $\IDbEl \in \{1,\dots, \DimX\}$, we obtain 
	\begin{align}
	{\El{ \AddInvMat \otimes \AddInvMat }{\vectorWildCard, \DimX(\IDbEl-1) +\IDEl   }}
	&=\begin{bmatrix}
	{\El{\AddInvMat}{1,\IDbEl}} {\El{\AddInvMat}{\vectorWildCard,\IDEl}} \\
	\vdots \\
	{\El{\AddInvMat}{\DimX,\IDbEl}} {\El{\AddInvMat}{\vectorWildCard,\IDEl}}
	\end{bmatrix}
	\VisibleColTwo{\nonumber\\&}
	=\VEC{   {\El{\AddInvMat}{\vectorWildCard,\IDEl}} {\El{\AddInvMat}{\vectorWildCard,\IDbEl}^{\MyTRANSPO}}   }
	\VisibleColTwo{\nonumber\\&}
	=
	\VEC{ {\BlockHMMat{\IDEl}{\IDbEl}}   }
	, 
	\\
	{\El{ \AddInvMat \otimes \AIMFBgain }{\vectorWildCard, \DimX(\IDbEl-1) +\IDEl   }}
	&=\begin{bmatrix}
	{\El{\AddInvMat}{1,\IDbEl}} {\El{\AIMFBgain}{\vectorWildCard,\IDEl}} \\
	\vdots \\
	{\El{\AddInvMat}{\DimX,\IDbEl}} {\El{\AIMFBgain}{\vectorWildCard,\IDEl}}
	\end{bmatrix}
	\VisibleColTwo{\nonumber\\&}
	=\VEC{   {\El{\AIMFBgain}{\vectorWildCard,\IDEl}} {\El{\AddInvMat}{\vectorWildCard,\IDbEl}^{\MyTRANSPO}}   }
	\VisibleColTwo{\nonumber\\&}
	=
	\VEC{ {\BlockHMMat{\DimX+\IDEl}{\IDbEl}}   }
	, 
	\\
	{\El{ \AIMFBgain \otimes \AddInvMat }{\vectorWildCard, \DimX(\IDbEl-1) +\IDEl   }}
	&=\begin{bmatrix}
	{\El{\AIMFBgain}{1,\IDbEl}} {\El{\AddInvMat}{\vectorWildCard,\IDEl}} \\
	\vdots \\
	{\El{\AIMFBgain}{\DimX,\IDbEl}} {\El{\AddInvMat}{\vectorWildCard,\IDEl}}
	\end{bmatrix}
	\VisibleColTwo{\nonumber\\&}
	=\VEC{   {\El{\AddInvMat}{\vectorWildCard,\IDEl}} {\El{\AIMFBgain}{\vectorWildCard,\IDbEl}^{\MyTRANSPO}}   }
	\VisibleColTwo{\nonumber\\&}
	=
	\VEC{ {\BlockHMMat{\IDEl}{\DimX+\IDbEl}}   }
	, 
	\\
	{\El{ \AIMFBgain \otimes \AIMFBgain }{\vectorWildCard, \DimX(\IDbEl-1) +\IDEl   }}
	&=\begin{bmatrix}
	{\El{\AIMFBgain}{1,\IDbEl}} {\El{\AIMFBgain}{\vectorWildCard,\IDEl}} \\
	\vdots \\
	{\El{\AIMFBgain}{\DimX,\IDbEl}} {\El{\AIMFBgain}{\vectorWildCard,\IDEl}}
	\end{bmatrix}
	\VisibleColTwo{\nonumber\\&}
	=\VEC{   {\El{\AIMFBgain}{\vectorWildCard,\IDEl}} {\El{\AIMFBgain}{\vectorWildCard,\IDbEl}^{\MyTRANSPO}}   }
	\VisibleColTwo{\nonumber\\&}
	=
	\VEC{ {\BlockHMMat{\DimX+\IDEl}{\DimX+\IDbEl}}   }
	. 
	\end{align}
	Substituting these relations into \eqref{eq:def2_dummyHHfunc}--\eqref{eq:def2_dummyMMfunc} yields \eqref{eq:def_dummyHHfunc}--\eqref{eq:def_dummyMMfunc}.
This completes the proof.

\section{Proof of Theorem \ref{thm:stability_QMIwithRankOne}} \label{pf:stability_QMIwithRankOne}

If {\itemMultiPERSQMIs} holds, 
we use $\dummyHMMat$ given in \eqref{eq:RankOneEigendecomposition}.
Because $\AddInvMat$ is nonsingular, $\dummyHMMat \neq 0$ and thus $0 \leq \MyRank{\dummyHMMat}=1$ hold.
Substituting \eqref{eq:def_dummyHHfunc}--\eqref{eq:def_dummyMMfunc} yields ${\SDPLMIterms{\IDpoly}{\AIMexpolyVMat{\IDpoly}}{\dummyHMMat}{\exEMSrate}}={\QMIterms{\IDpoly}{\AIMexpolyVMat{\IDpoly}}{\AddInvMat}{\AIMFBgain}{\exEMSrate}} \succeq 0$ that implies {\itemMultiPERSSDPs}.

Next,
if {\itemMultiPERSSDPs} holds, using  \eqref{eq:RankOneEigendecomposition} gives $\AddInvMat$ and $\AIMFBgain$ because of $\MyRank{\dummyHMMat}=1$.
Substituting these matrices yields ${\QMIterms{\IDpoly}{\AIMexpolyVMat{\IDpoly}}{\AddInvMat}{\AIMFBgain}{\exEMSrate}}={\SDPLMIterms{\IDpoly}{\AIMexpolyVMat{\IDpoly}}{\dummyHMMat}{\exEMSrate}} \succeq 0$  that implies {\itemMultiPERSQMIs}.

In the same way, we can prove the equivalence between {\itemMultiPRSQMIs} and {\itemMultiPRSSDPs}.
This completes the proof.

\section{Proof of Lemma \ref{thm:multiSDPfunc}} \label{pf:multiSDPfunc}

For any $\dummyHMMat\succeq 0$, $\iEig{\IDEl}{  {\dummyHMMat}   } \geq 0$ holds for $\IDEl \in \{1,\dots,\DimX(\DimX+\DimU)\} $.
Using ${\iEigVec{1}{\dummyHMMat}}^{\MyTRANSPO} \dummyHMMat {\iEigVec{1}{\dummyHMMat}}
={\iEigVec{1}{\dummyHMMat}}^{\MyTRANSPO} {\iEig{1}{\dummyHMMat}} {\iEigVec{1}{\dummyHMMat}}
={\iEig{1}{\dummyHMMat}}$ and 
$\TRACE{\dummyHMMat}=\sum_{\IDEl=1}^{\DimX(\DimX+\DimU)} {\iEig{\IDEl}{\dummyHMMat}}$ yields	
\begin{align}
{\rankoneApproxErr{\dummyHMMat}{\dummyHMMat}}
= \Big(\sum_{\IDEl=1}^{\DimX(\DimX+\DimU)}| {\iEig{\IDEl}{\dummyHMMat}}| \Big)
-
| {\iEig{1}{\dummyHMMat}}|
= {\rankoneErr{\dummyHMMat}}
. 
\end{align}

Next, we choose unit eigenvectors ${\iEigVec{\IDEl}{\dummyPreHMMat}}$ such that $[{\iEigVec{1}{\dummyPreHMMat}},\dots,{\iEigVec{\DimX(\DimX+\DimU)}{\dummyPreHMMat}}]$ is orthogonal because $\dummyPreHMMat \neq 0$ is symmetric \cite[Section 3.8.7]{Gentle07}.
This choice enables the use of the spectral decomposition 
$
{\Identity{\DimX(\DimX+\DimU)}} 
=
\sum_{\IDEl=1}^{\DimX(\DimX+\DimU)} {\iEigVec{\IDEl}{\dummyPreHMMat}}  {\iEigVec{\IDEl}{\dummyPreHMMat}}^{\MyTRANSPO}
$ 
\cite[Section 3.8.7]{Gentle07} and the results in \cite[Lemma 4.2]{Liu19} with the replacement of singular values with eigenvalues.
Thus, we obtain
\begin{align}
\VisibleColTwo{&}
{\rankoneApproxErr{\dummyHMMat}{\dummyPreHMMat}}
\VisibleColTwo{\nonumber\\}
&
=
\TRACE{	\dummyHMMat }
-
\TRACE{
	{\iEigVec{1}{\dummyPreHMMat}} {\iEigVec{1}{\dummyPreHMMat}}^{\MyTRANSPO} 
	\dummyHMMat  		
}
\nonumber\\&
=
\TRACE[\big]{
	(
	{\Identity{\DimX(\DimX+\DimU)}} - 
	{\iEigVec{1}{\dummyPreHMMat}} {\iEigVec{1}{\dummyPreHMMat}}^{\MyTRANSPO} 
	)
	\dummyHMMat  		
}
\nonumber\\&
=	
\TRACE[\Big]{
	\Big(
	\sum_{\IDbEl=2}^{\DimX(\DimX+\DimU)} {\iEigVec{\IDbEl}{\dummyPreHMMat}}  {\iEigVec{\IDbEl}{\dummyPreHMMat}}^{\MyTRANSPO}
	\Big)
	\dummyHMMat  		
}
\nonumber\\&
\geq
\sum_{\IDEl =1}^{\DimX(\DimX+\DimU)  }  
{\iEig{\IDEl}{  
\sum_{\IDbEl=2}^{\DimX(\DimX+\DimU)} {\iEigVec{\IDbEl}{\dummyPreHMMat}}  {\iEigVec{\IDbEl}{\dummyPreHMMat}}^{\MyTRANSPO}
  }} 
{\iEig{\DimX(\DimX+\DimU)  - \IDEl +1}{ \dummyHMMat }}
\nonumber\\&
=
\sum_{\IDEl =1}^{\DimX(\DimX+\DimU) - 1 }  
{\iEig{\DimX(\DimX+\DimU)  - \IDEl +1}{ \dummyHMMat }}
\nonumber\\&
=
\sum_{\IDEl =2}^{\DimX(\DimX+\DimU)  }  
|{\iEig{\IDEl}{ \dummyHMMat }}|
.
\end{align}
Note that the above inequality is derived from the property 
$\sum_{\IDEl=1}^{\DimX}
{\iEig{\IDEl}{ \NotationSymMat_{1} }}
{\iEig{\DimX  - \IDEl +1}{ \NotationSymMat_{2} }}
\leq
\TRACE{  \NotationSymMat_{1} \NotationSymMat_{2} }
$ for any $\NotationSymMat_{1} \succeq 0 \in {\SetSymMat{\DimX}}$ and $\NotationSymMat_{2} \succeq 0 \in {\SetSymMat{\DimX}}$ \cite[Lemma 4.1]{Liu19}\cite[Theorem II.1]{Lasserre95}.
This completes the proof.

\section{Proof of Theorem \ref{thm:TISMP_examples}} \label{pf:TISMP_examples}

First, we prove the statement \ref{item_iid}.
Using 
the i.i.d. property and \eqref{eq:iid_NUMpoly}--\eqref{eq:iid_polySMvecAB}
gives
$
{\CondExpectTI[\big]{   {\vecTIAB{\MyT}}{\vecTIAB{\MyT}^{\MyTRANSPO}}    }}
=
{\Expect[]{ {\NonArgvecAB{\MyT}} {\NonArgvecAB{\MyT}^{\MyTRANSPO}} }}
=
{\polySMvecAB{1}}
$, which corresponds to the condition \eqref{eq:SMP_AA_AB_BB}.
Because of \eqref{eq:iid_MapPolyW}, ${\MapPolyW{\TIUnc}} \in \DomPolytope$ holds clearly.
All the conditions for the SMP systems in Definition \ref{def:TVSMP} are satisfied, implying the statement \ref{item_iid}.

Next, we prove the statement \ref{item_random_polytope}.
Using \eqref{eq:setting1_polyW} yields ${\El{\TIUnc}{\IDpoly}}{\El{\TIUnc}{\IDbpoly}} = {\El{\VEC{ {\TIUnc}  {\TIUnc^{\MyTRANSPO}}  }}{   \DimTVUnc(\IDbpoly-1) + \IDpoly    }}= {\El{\MapPolyW{\TIUnc}}{   \DimTVUnc(\IDbpoly-1) + \IDpoly    }}$.
Using this relation and \eqref{eq:setting1_polySMvecAB} yields
\begin{align}
\VisibleColTwo{&}
{\CondExpectTI[\big]{   {\vecTIAB{\MyT}}{\vecTIAB{\MyT}^{\MyTRANSPO}}    }}
\VisibleColTwo{\nonumber\\}
&=
{\CondExpectTI[\Big]{ 
		\sum_{\IDpoly  =1}^{\DimTVUnc} 
		\sum_{\IDbpoly =1}^{\DimTVUnc} 
		{\El{\TIUnc}{\IDpoly}} {\El{\TIUnc}{\IDbpoly}} 
		{\polyRandvecAB{\IDpoly}}{\polyRandvecAB{\IDbpoly}}^{\MyTRANSPO}
}}
\nonumber\\
&=
\sum_{\IDpoly  =1}^{\DimTVUnc} 
\sum_{\IDbpoly =1}^{\DimTVUnc} 
{\El{\TIUnc}{\IDpoly}} {\El{\TIUnc}{\IDbpoly}} 
\frac{\Expect{ 
		{\polyRandvecAB{\IDpoly}}{\polyRandvecAB{\IDbpoly}}^{\MyTRANSPO}
		+
		{\polyRandvecAB{\IDbpoly}}{\polyRandvecAB{\IDpoly}}^{\MyTRANSPO}		
}}{2}
\nonumber\\
&=
\sum_{\IDpoly  =1}^{\DimTVUnc} 
\sum_{\IDbpoly =1}^{\DimTVUnc} 
{\El{\MapPolyW{\TIUnc}}{   \DimTVUnc(\IDbpoly-1) + \IDpoly    }}
{\polySMvecAB{ \DimTVUnc(\IDbpoly-1) + \IDpoly  }}
\nonumber\\
&=
\sum_{\IDpoly=1}^{\NUMpoly}  {\El{\MapPolyW{\TIUnc}}{\IDpoly}}  {\polySMvecAB{\IDpoly}}
.
\label{eq:SMP_AA_AB_BB_random_polytope}
\end{align}
In addition, for any ${\TIUnc} \in \DomTVUnc$, 
$\sum_{\IDpoly=1}^{\NUMpoly}{\El{\MapPolyW{\TIUnc}}{\IDpoly}} 
=
\sum_{\IDpoly  =1}^{\DimTVUnc} 
\sum_{\IDbpoly =1}^{\DimTVUnc}
{\El{\TIUnc}{\IDpoly}} {\El{\TIUnc}{\IDbpoly}} 
=1
$
and
${\El{\MapPolyW{\TIUnc}}{\IDpoly}} \geq 0$  
hold.
This indicates that ${\MapPolyW{\TIUnc}} \in \DomPolytope$ holds for any ${\TIUnc} \in \DomTVUnc$.
This result and \eqref{eq:SMP_AA_AB_BB_random_polytope} satisfy all the conditions for the SMP systems in Definition \ref{def:TVSMP}, implying the statement \ref{item_random_polytope}.

Next, we prove the statement \ref{item_det_polytope}, where ${\MapPolyW{\TIUnc}} \in \DomPolytope$ was already proved.
We can derive the condition \eqref{eq:SMP_AA_AB_BB} in a manner similar to \eqref{eq:SMP_AA_AB_BB_random_polytope} by excluding the expectation. 
Thus, the statement \ref{item_det_polytope} holds.

Finally,  we prove the statement \ref{item_uncMeanCov_NUMpoly}, where ${\MapPolyW{\TIUnc}} \in \DomPolytope$ was already proved.
In a manner similar to \eqref{eq:SMP_AA_AB_BB_random_polytope}, we obtain
\begin{align}
\VisibleColTwo{&}
{\CondExpectTI[\big]{   {\vecTIAB{\MyT}}{\vecTIAB{\MyT}^{\MyTRANSPO}}    }}
\VisibleColTwo{\nonumber\\}
&=
{\CondExpectTI{\vecTIAB{\MyT}}} {\CondExpectTI{\vecTIAB{\MyT}}}^{\MyTRANSPO} 
+
{\CondCovTI[\big]{\vecTIAB{\MyT}}}
\nonumber\\
&=
\sum_{\IDpoly  =1}^{\DimTVUnc} 
\sum_{\IDbpoly =1}^{\DimTVUnc} 
{\El{\TIUnc}{\IDpoly}} {\El{\TIUnc}{\IDbpoly}} 
\frac{ 
{\polyMeanvecAB{\IDpoly}}{\polyMeanvecAB{\IDbpoly}}^{\MyTRANSPO}
+
{\polyMeanvecAB{\IDbpoly}}{\polyMeanvecAB{\IDpoly}}^{\MyTRANSPO}		
}{2}
\VisibleColTwo{\nonumber\\&\quad}
+
\Big( \sum_{\IDbpoly =1}^{\DimTVUnc}   {\El{\TIUnc}{\IDbpoly}}  \Big)
\sum_{\IDpoly  =1}^{\DimTVUnc} 
{\El{\TIUnc}{\IDpoly}}  {\polyCovvecAB{\IDpoly}} 
\nonumber\\
&=
\sum_{\IDpoly  =1}^{\DimTVUnc} 
\sum_{\IDbpoly =1}^{\DimTVUnc} 
{\El{\MapPolyW{\TIUnc}}{   \DimTVUnc(\IDbpoly-1) + \IDpoly    }}
{\polySMvecAB{ \DimTVUnc(\IDbpoly-1) + \IDpoly  }}
.\label{eq:SMP_AA_AB_BB_uncMeanCov}
\end{align}
Therefore, the statement \ref{item_uncMeanCov_NUMpoly} holds.
This completes the proof.


\section{Proof of Proposition \ref{thm:SMP_vs_random_polytope}} \label{pf:SMP_vs_random_polytope}

Let us consider the TI version of the SMP system involving ${\anothervecTIAB{\MyT}}$ in Example \ref{ex:SMP} with the following settings: 
$\DimX=\DimU=1$, 
$\anotherDimTVUnc=2$,
$\anotherDomTVUnc=\DomPolytope|_{\NUMpoly=2}$,
$\ConstMeanvecAB=0$,
${\El{\polyCovvecAB{\IDpoly}}{1,1}}={\El{\polyCovvecAB{\IDpoly}}{2,2}}={\pfCovNonDiagVal{0}}$,
and
${\El{\polyCovvecAB{\IDpoly}}{1,2}}={\El{\polyCovvecAB{\IDpoly}}{2,1}}={\pfCovNonDiagVal{\IDpoly}}$,
where ${\pfCovNonDiagVal{\IDpoly}}$ are constants satisfying ${\pfCovNonDiagVal{1}}\neq {\pfCovNonDiagVal{2}}$.
Let us suppose that this SMP system is represented by a random polytope with ${\vecTIAB{\MyT}}$  given in Theorem \ref{thm:TISMP_examples} \ref{item_random_polytope}, 
that is, \eqref{eq:SMPvsRP_goalA} and \eqref{eq:SMPvsRP_goalB} are assumed to hold.
Then, we obtain
\begin{align}
&
\sum_{\IDpoly  =1}^{\DimTVUnc} 
\sum_{\IDbpoly =1}^{\DimTVUnc} 
{\El{\TIUnc}{\IDpoly}} {\El{\TIUnc}{\IDbpoly}} 
{\Expect[\Bigg]{ 
\begin{bmatrix}
{\El{\polyRandvecAB{\IDpoly}}{1}}{\El{\polyRandvecAB{\IDbpoly}}{1}} &
{\El{\polyRandvecAB{\IDpoly}}{1}}{\El{\polyRandvecAB{\IDbpoly}}{2}} \\
{\El{\polyRandvecAB{\IDpoly}}{2}}{\El{\polyRandvecAB{\IDbpoly}}{1}} &
{\El{\polyRandvecAB{\IDpoly}}{2}}{\El{\polyRandvecAB{\IDbpoly}}{2}}
\end{bmatrix}
}}
\nonumber\\
&
=
{\CondExpectTI[\big]{   {\vecTIAB{\MyT}}{\vecTIAB{\MyT}^{\MyTRANSPO}}    }}
\nonumber\\
&=
{\anotherCondExpectTI{    {\anothervecTIAB{\MyT}} {\anothervecTIAB{\MyT}^{\MyTRANSPO}}        }}
\nonumber\\
&
=
\begin{bmatrix}
{\pfCovNonDiagVal{0}} &
{\El{\anotherTIUnc}{1}} {\pfCovNonDiagVal{1}} + {\El{\anotherTIUnc}{2}} {\pfCovNonDiagVal{2}} \\ 
{\El{\anotherTIUnc}{1}} {\pfCovNonDiagVal{1}} + {\El{\anotherTIUnc}{2}} {\pfCovNonDiagVal{2}} & 
{\pfCovNonDiagVal{0}}
\end{bmatrix}
.\label{eq:pf_randopolyVSsmps_1}
\end{align}
For each $\IDpoly$, substituting ${\El{\TIUnc}{\IDpoly}}=1$ into \eqref{eq:pf_randopolyVSsmps_1} yields
\begin{align}
&
\IDpoly \in \{ 1,\dots,\DimTVUnc \}
,\quad
{\Expect[]{ \El{\polyRandvecAB{\IDpoly}}{1}^{2}  }}
={\pfCovNonDiagVal{0}}.
\end{align}
For each $(\IDpoly,\IDbpoly)\in \{ 1,\dots,\DimTVUnc \}^{2}$, substituting ${\El{\TIUnc}{\IDpoly}}={\El{\TIUnc}{\IDbpoly}}=1/2$ into \eqref{eq:pf_randopolyVSsmps_1} gives
\begin{align}
{\Expect[]{ 
		\El{\polyRandvecAB{\IDpoly}}{1}^{2} 
		+\El{\polyRandvecAB{\IDbpoly}}{1}^{2}
		+2 {\El{\polyRandvecAB{\IDpoly}}{1}}	 {\El{\polyRandvecAB{\IDbpoly}}{1}}
 }}
/4
={\pfCovNonDiagVal{0}}
.
\end{align}
This implies ${\Expect[]{  {\El{\polyRandvecAB{\IDpoly}}{1}}	 {\El{\polyRandvecAB{\IDbpoly}}{1}}	}}={\pfCovNonDiagVal{0}}$.
Then, 
applying the Cauchy-Schwarz inequality \cite[Section 5.5]{Gray09} to ${\El{\polyRandvecAB{1}}{2}}(  {\El{\polyRandvecAB{\IDpoly}}{1}} - {\El{\polyRandvecAB{1}}{1}} )$ gives
\begin{align}
\VisibleColTwo{&}
{\Expect[\big]{ 
{\El{\polyRandvecAB{1}}{2}}(  {\El{\polyRandvecAB{\IDpoly}}{1}} - {\El{\polyRandvecAB{1}}{1}} )
}^{2}}
\VisibleColTwo{\nonumber\\}
&
\leq 
{\Expect[]{ {\El{\polyRandvecAB{1}}{2}^{2}}     }}
{\Expect[]{ (  {\El{\polyRandvecAB{\IDpoly}}{1}} - {\El{\polyRandvecAB{1}}{1}} )^{2}  }}
\nonumber\\&
 \leq 
{\Expect[]{ {\El{\polyRandvecAB{1}}{2}^{2}}     }}
\big(
{\Expect[]{ {\El{\polyRandvecAB{\IDpoly}}{1}^{2}}     }}
+{\Expect[]{ {\El{\polyRandvecAB{1}}{1}^{2}}     }}
-
2{\Expect[]{ {\El{\polyRandvecAB{\IDpoly}}{1}}{\El{\polyRandvecAB{1}}{1}}     }}
\big)
\nonumber\\&
=0
. 
\end{align}
In a similar manner, ${\Expect[]{  {\El{\polyRandvecAB{\IDpoly}}{1}}(  {\El{\polyRandvecAB{\IDbpoly}}{2}} - {\El{\polyRandvecAB{1}}{2}} )  }}=0$ is derived.
Therefore, substituting these results into \eqref{eq:pf_randopolyVSsmps_1} yields
\begin{align}
&\sum_{\IDpoly  =1}^{\DimTVUnc} 
\sum_{\IDbpoly =1}^{\DimTVUnc} 
{\El{\TIUnc}{\IDpoly}} {\El{\TIUnc}{\IDbpoly}} 
{\Expect[\big]{ 
	{\El{\polyRandvecAB{\IDpoly}}{1}}{\El{\polyRandvecAB{\IDbpoly}}{2}}
}}
\nonumber\\
&=
\sum_{\IDpoly  =1}^{\DimTVUnc} 
\sum_{\IDbpoly =1}^{\DimTVUnc} 
{\El{\TIUnc}{\IDpoly}} {\El{\TIUnc}{\IDbpoly}} 
\Expect[\big]{ 
{\El{\polyRandvecAB{1}}{1}}{\El{\polyRandvecAB{1}}{2}}
\VisibleColTwo{\nonumber\\&\qquad}
+
{\El{\polyRandvecAB{\IDpoly}}{1}}(  {\El{\polyRandvecAB{\IDbpoly}}{2}} - {\El{\polyRandvecAB{1}}{2}} )
+
{\El{\polyRandvecAB{1}}{2}}(  {\El{\polyRandvecAB{\IDpoly}}{1}} - {\El{\polyRandvecAB{1}}{1}} )
}
\nonumber\\
&=
{\Expect[\big]{ 
		{\El{\polyRandvecAB{1}}{1}}{\El{\polyRandvecAB{1}}{2}}
}}
\nonumber\\
&=
{\El{\anotherTIUnc}{1}} {\pfCovNonDiagVal{1}} + (1-{\El{\anotherTIUnc}{1}} ) {\pfCovNonDiagVal{2}} 
.
\end{align}
The result contradicts the condition ${\pfCovNonDiagVal{1}} \neq {\pfCovNonDiagVal{2}} $ because ${\El{\anotherTIUnc}{1}}$ is not constant.
Therefore, for any setting of  ${\polyRandvecAB{\IDpoly}}$, there exists $\anotherTIUnc$ such that every $\TIUnc$ cannot satisfy \eqref{eq:pf_randopolyVSsmps_1}.
This completes the proof.

\end{document}